\documentclass[ ]{article}
\usepackage[calc]{picture}
\usepackage{graphicx}
\usepackage{subfigure}
\usepackage{graphics}
\usepackage{xcolor,tikz}
\usepackage{dsfont}
\usetikzlibrary{decorations.pathreplacing}
\usepackage{amssymb,amsmath,amsthm,bm,setspace,mathrsfs}
\usepackage{marvosym}
\usepackage{url,color,units}
\usepackage[T1]{fontenc}

\DeclareMathOperator{\Dim}{Dim_{\rm H}}
\DeclareMathOperator{\card}{card}
\DeclareMathOperator{\Recurrent}{\sim_{{\rm (R)}}}
\DeclareMathOperator{\nRecurrent}{{\not\sim}_{{\rm (R)}}}

\newcommand{\N}{\mathbf{N}}
\newcommand{\Z}{\mathbf{Z}}

\newcommand{\R}{\mathbf{R}}

\newcommand{\cS}{\mathcal{S}}
\newcommand{\cV}{\mathcal{V}}
\newcommand{\cN}{\mathcal{N}}
\def\D{\mathcal{D}}

\RequirePackage{color}
\RequirePackage[colorlinks, urlcolor=my-blue,linkcolor=my-link,citecolor=my-green]{hyperref}

\definecolor{my-link}{rgb}{0.5,0.0,0.0}
\definecolor{my-blue}{rgb}{0.0,0.0,0.6}
\definecolor{my-red}{rgb}{0.5,0.0,0.0}
\definecolor{my-green}{rgb}{0.0,0.5,0.0}
\definecolor{darkgreen}{rgb}{0.2,0.45,0}
\definecolor{nicosred}{rgb}{0.65,0.0,0.0}
\definecolor{nicos-red}{rgb}{0.0,0.6,0.7}
\definecolor{light-gray}{gray}{0.6}
\definecolor{really-light-gray}{gray}{0.8}

\newcommand{\be}{\begin{equation}}
\newcommand{\ee}{\end{equation}}

\renewcommand{\P}{\mathrm{P}}
\newcommand{\E}{\mathrm{E}}

\newcommand{\1}{\mathds{1}}
\renewcommand{\d}{{\rm d}}

\newcommand{\e}{{\rm e}}
\renewcommand{\geq}{\geqslant}
\renewcommand{\leq}{\leqslant}
\renewcommand{\ge}{\geqslant}
\renewcommand{\le}{\leqslant}

\author{Nicos Georgiou\\University of Sussex
\and Davar Khoshnevisan\\University of Utah
\and Kunwoo Kim\\University of Utah
\and Alex D. Ramos\\Federal University of Pernambuco}
\title{\bf The dimension of the range of a transient random walk\thanks{
	Research supported in part by the NSF grants DMS-1006903 and DMS-1307470}}
\date{October 30, 2014}
%
\newtheorem{stat}{Statement}[section]
\newtheorem{proposition}[stat]{Proposition}
\newtheorem{corollary}[stat]{Corollary}
\newtheorem{theorem}[stat]{Theorem}
\newtheorem{lemma}[stat]{Lemma}
\theoremstyle{definition} 
\newtheorem{definition}[stat]{Definition}
\newtheorem{remark}[stat]{Remark}
\newtheorem{CONJ}{Conjecture}
\newtheorem{OP}{Problem}
\newtheorem{example}[stat]{Example}

\numberwithin{equation}{section}

\begin{document}
\maketitle
\begin{abstract} 
We find formulas for the macroscopic Minkowski and Hausdorff dimensions of
the range of an arbitrary transient walk in $\Z^d$. 
This endeavor solves a problem of Barlow and Taylor (1991).\\

\noindent{\it Keywords.} Transient random walks, Hausdorff
	dimension, recurrent sets, fractal percolation, capacity.\\

	\noindent{\it \noindent AMS 2010 subject classification.}
	Primary 60G50; Secondary 60J45, 60J80.
\end{abstract}

\section{ Introduction}

Let $X:=\{X_n\}_{n=0}^\infty$ 
denote a random walk on $\Z^d$,
started at a nonrandom point $X_0:=a\in\Z^d$, and denote its range by
$\mathcal{R}_X:=\cup_{n=0}^\infty\{X_n\}$.
Our goal is to give an answer to the following question of Barlow and Taylor
\cite[Problem, p.\ 145]{BT1992}:
\begin{equation}\label{BT:question}
		\text{What is $\Dim(\mathcal{R}_X)$?}
\end{equation}
Here,  $\Dim(G)$ denotes the macroscopic Hausdorff dimension
of a set $G\subset\R^d$, as was defined in \cite{BT1989,BT1992}.
We will recall the formal definition and first properties of $\Dim$ in 
\S\ref{subsec:Dim} below. In informal terms, $\Dim(G)$
measures the  ``local dimension of $G$ at infinity,'' and describes the 
large-scale geometry of $G$ in a manner that is similar
to the way that the ordinary, microscopic, 
Hausdorff dimension of $G$ describes
the small-scale geometry of $G$.

In the case that $X$ is a recurrent random walk on $\Z^d$,
one can answer \eqref{BT:question} easily. Recall that a point
$x\in\Z^d$ is \emph{possible} for $X$ if $P^0\{ X_n=x\}>0$
for some $n\ge 1$, where $P^a$ denotes the conditional
law of $X$ given that $X_0=a$, as usual. 
The collection of all possible points of $X$ is an additive
subgroup of $\Z^d$, whence homomorphic to $\Z^k$
for some integer $0\le k \le d$, thanks to
the structure theory of finitely-generated abelian groups.
Therefore, the strong Markov property implies that
$\mathcal{R}_X$ is homomorphic
to $\Z^k$ a.s., and hence we have $\Dim(\mathcal{R}_X)=\Dim(\Z^k)=k$
a.s.\  \cite{BT1992}. 
In other words, \eqref{BT:question}
has non-trivial content if and only if $X$ is transient. This explains
why Barlow and
Taylor \cite{BT1992}  pose \eqref{BT:question} only for transient random walks.

As far as we know, the only positive result 
about \eqref{BT:question} is due to
Barlow and Taylor  themselves \cite[Cor.\ 7.9]{BT1992}. In order to
describe their result, let $g$ denote the Green function of $X$.
That is,
\begin{equation}\label{eq:g}
	g(a\,,x) := \sum_{n=0}^\infty P^a\{X_n=x\},\hskip0.4in x,a\in\Z^d.
\end{equation}
Of course, $g(a\,,x)=g(0\,,x-a)$ as well, and the transience of $X$
is equivalent to the finiteness of the function $g$ on all of
$\Z^d\times\Z^d$. 

Question \eqref{BT:question} was in part motivated by the following
positive result.

\begin{proposition}[Barlow and Taylor \protect{%
	\cite[Corollary 7.9]{BT1992}}]\label{pr:BT} Let $d\geq 2$.
	Suppose there exist  constants $\alpha\in(0\,,2]$
	and $A,B\in(0\,,\infty)$ such that
	\begin{equation}\label{cond:BT}
		A\|x\|^{-d+\alpha}\le 
		g(0\,,x) \le B\|x\|^{-d+\alpha},
	\end{equation}
	whenever $x\in\Z^d\setminus\{0\}$. Then, 
	\begin{equation}
		\Dim(\mathcal{R}_X)=\alpha
		\qquad P^0\text{-a.s.}
	\end{equation}
\end{proposition}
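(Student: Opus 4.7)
The plan is to prove the two matching inequalities $\Dim(\mathcal{R}_X)\le\alpha$ and $\Dim(\mathcal{R}_X)\ge\alpha$ separately, both $P^0$-a.s. Throughout let $\mathcal{S}_n:=\{x\in\Z^d:2^{n-1}\le\|x\|<2^n\}$ denote the $n$-th dyadic shell and $L(x):=\sum_{k=0}^\infty \1\{X_k=x\}$ the local time at $x$, so that $\E^0 L(x)=g(0,x)$ and, by transience, $P^0\{x\in\mathcal{R}_X\}=g(0,x)/g(0,0)$ from the strong Markov property.

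\textbf{Upper bound.} I would cover the visited sites directly. Using \eqref{cond:BT},
\begin{equation*}
   \E^0\bigl[\,|\mathcal{R}_X\cap\mathcal{S}_n|\,\bigr]
   \le \frac{1}{g(0,0)}\sum_{x\in\mathcal{S}_n} g(0,x)
   \le \frac{B}{g(0,0)}\sum_{x\in\mathcal{S}_n}\|x\|^{-d+\alpha}
   \le C\, 2^{n\alpha}.
\end{equation*}
Fix $s>\alpha$. Covering $\mathcal{R}_X\cap\mathcal{S}_n$ by radius-$1$ balls around each visited site yields a Barlow--Taylor $s$-mass bounded by $\sum_n 2^{-ns}|\mathcal{R}_X\cap\mathcal{S}_n|$, whose expectation is a geometric series in $2^{n(\alpha-s)}$ and hence finite. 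By monotone convergence and the definition of $\Dim$, $\Dim(\mathcal{R}_X)\le s$ almost surely; letting $s\downarrow\alpha$ through rationals gives $\Dim(\mathcal{R}_X)\le\alpha$.

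\textbf{Lower bound.} This is the real work. I would fix $s<\alpha$ and, on each shell $\mathcal{S}_n$, form the random measure $\mu_n:=\sum_{x\in\mathcal{S}_n} L(x)\,\delta_x$. The lower bound in \eqref{cond:BT} gives $\E^0\mu_n(\mathcal{S}_n)\ge c\,2^{n\alpha}$. For the second moment, a standard split at the first passage to $y$ yields $\E^0[L(x)L(y)]\le g(0,x)g(x,y)+g(0,y)g(y,x)$; combined with the upper bound in \eqref{cond:BT} and the estimate $\sum_{0<\|z\|\le 2^{n+1}}\|z\|^{-d+\alpha-s}\le C\,2^{n(\alpha-s)}$ (valid because $\alpha>s$), one obtains a macroscopic $s$-energy bound
\begin{equation*}
   \E^0\!\left[\,\sum_{x,y\in\mathcal{S}_n}\frac{L(x)L(y)}{\max(\|x-y\|,1)^s}\,\right]
   \le C\,2^{n(2\alpha-s)}.
\end{equation*}
Together with $(\E^0\mu_n(\mathcal{S}_n))^2\ge c\,2^{2n\alpha}$, Paley--Zygmund shows that, with probability bounded below uniformly in $n$, $\mu_n$ has mass comparable to $2^{n\alpha}$ and $s$-energy comparable to $2^{n(2\alpha-s)}$; a Frostman-type lemma of Barlow--Taylor then forces every cover of $\mathcal{R}_X\cap\mathcal{S}_n$ by balls $B_i$ of radii $r_i\le 2^n$ to satisfy $\sum_i (r_i/2^n)^s\ge c>0$. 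Summing in $n$ one gets $\nu^s(\mathcal{R}_X)=\infty$ with positive probability, and Kolmogorov's $0$--$1$ law upgrades this to probability one, so $\Dim(\mathcal{R}_X)\ge s$; letting $s\uparrow\alpha$ concludes.

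\textbf{Main obstacle.} The delicate step is the lower bound. Converting the first- and second-moment estimates into almost sure thickness of $\mathcal{R}_X\cap\mathcal{S}_n$ for infinitely many $n$ requires decoupling shells: I would run the walk until it exits a much larger ball $B(0,2^{n+K})$, apply the strong Markov property at that exit, and iterate along a sparse subsequence $n_j$ to obtain a genuine independence-from-the-past that justifies a Borel--Cantelli step. The second subtle point is the shift from the classical $s$-energy Frostman lemma to the macroscopic version intrinsic to $\Dim$; this requires the shell-by-shell accounting built into the definition of $\nu^s$ and is where the $2^{-ns}$ normalizations in the covers must match the $2^{n\alpha}$ normalization of $\mu_n$ produced by the Green function.
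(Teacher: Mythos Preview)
Your argument is correct in outline and follows the direct moment/Frostman route---essentially the approach Barlow and Taylor themselves took in \cite{BT1992}. The paper's proof, by contrast, is entirely different and is built on the fractal-percolation machinery of \S\ref{sec:fp}: it invokes \cite[Cor.~8.4]{BT1992} to the effect that, under \eqref{cond:BT}, a nonrandom set $F$ is recurrent for $X$ precisely when $\Dim(F)>d-\alpha$; applies this conditionally with $F=\Pi_p$ (using $\Dim(\Pi_p)=d+\log_2 p$ from Corollary~\ref{co:Dim:Pi}); swaps the conditioning via Fubini; and then reads off $\Dim(\mathcal{R}_X)$ from Theorem~\ref{th:Dim:Pi}, which identifies the dimension of any set with the critical $p$ at which it becomes recurrent for $\Pi_p$. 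Your route is self-contained and avoids the percolation apparatus altogether; the paper's route is a codimension argument in the spirit of Taylor~\cite{Tay1966} and Peres~\cite{Peres1996}, and serves to illustrate why the machinery of \S\S\ref{sec:fp}--\ref{sec:mc} was developed.

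One remark on your lower bound: the decoupling you flag as the main obstacle is not actually needed. Once Paley--Zygmund yields $P^0\{\mathcal{N}_s(\mathcal{R}_X,\mathcal{S}_n)\ge c'\}\ge c>0$ uniformly in $n$, reverse Fatou gives $P^0\{\mathcal{N}_s(\mathcal{R}_X,\mathcal{S}_n)\ge c'\text{ for infinitely many }n\}\ge c$, hence $P^0\{\Dim(\mathcal{R}_X)\ge s\}\ge c>0$; and since $\Dim(\mathcal{R}_X)$ is invariant under finite permutations of the increments, the Hewitt--Savage $0$--$1$ law (which the paper also uses) upgrades this to probability one with no sparse-subsequence or strong-Markov restart argument required.
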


A principal goal of this article is to answer  question \eqref{BT:question} in
general. We do this by following a suggestion of Barlow and Taylor and
introducing a random-walk ``index'' that is equal to 
$\Dim(\mathcal{R}_X)$. 
Moreover, as is tacitly implied in \cite{BT1992},
that index is defined solely in
terms of the statistical properties of $X$.
That index turns out to be related  to
the notion of a  ``recurrent set'' for $X$.

Recall that a set $F\subseteq\Z^d$ is said to be 
\emph{recurrent} for $X$, under $P^a$,
if the random set $X^{-1}(F):=\{n\ge 0: X_n\in F\}$ is unbounded
$P^a$-a.s. This definition makes sense regardless of whether $F$ is random
or not.

Because $X$ is transient,
a necessary condition for the recurrence of $F$ is
that $F$ is unbounded, though this condition is not sufficient,
as the following example shows:
Let $X$ denote the simple symmetric walk
on $\Z^3$, and define $F:=\cup_{k=1}^\infty\{x_k\}$,
where $x_k:=(0\,,0\,,k^3)$.
By the local central limit theorem,
$P^0\{x_k\in\mathcal{R}_X\}=O(k^{-3})$ as $k\to\infty$, hence
\begin{equation}
	E^0 \left[ \sum_{n=0}^\infty\1_F(X_n) \right]=\sum_{k=0}^\infty
	P^0\{x_k\in\mathcal{R}_X\}<\infty,
\end{equation}
thanks to Tonelli's theorem.
Therefore, $F$ is not recurrent for $X$, though it is unbounded.
By $E^0$ we mean the expectation operator for $P^0.$

A necessary-and-sufficient condition for the recurrence
of a \emph{nonrandom} set $F$ was  found first by It\^o and McKean \cite{IM}
in the case that $X$ is the simple random walk on $\Z^d$
for some $d\ge 3$. Lamperti \cite{Lam1963}
discovered a necessary-and-sufficient condition 
in the case that $X$ belongs to a large family of transient random walks on $\Z^d$,
and in fact many transient Markov chains on countable
state spaces. When $X$ is a general transient random walk on $\Z^d$, or
more generally a Markov chain on a countable state space,
there is also an exact condition for set recurrence, but that condition is more involved;
see Bucy \cite{Bucy1965} and, more recently, Benjamini et al \cite{BPP1995}.

All latter works involve various notions of abstract capacity that are
borrowed from probabilistic potential theory. Our answer to \eqref{BT:question}
is also stated in terms of a sort of abstract capacity condition;
see Corollary \ref{cor:main}. One can prove that when the underlying
random walk $X$ is sufficiently regular---for example as in the statement of
Proposition \ref{pr:BT}---our condition can simplify. 
See Corollary \ref{cor:RW:Dim} for  instance.
Still, our index is very difficult to work with in general; see \S\ref{sec:conj}
for a discussion of this shortcoming, and for potential remedies.

We conclude the Introduction with an outline of the paper.
In \S\ref{sec:Backgd} we include some of the technical prerequisites
to reading this paper. Then, in \S\ref{sec:fp}
we  develop a macroscopic theory of ``fractal percolation''
that is the large-scale analogue of  the microscopic theory of
fractal percolation \cite{Mandelbrot,Peres1996}. 
Our macroscopic extension of the microscopic theory is not entirely trivial,
but will  ring familiar to many experts. 
In \S\ref{sec:forest} we write a forest representation
of $\Z^d$ and use it together with the theory of two-parameter processes 
\cite{Kh2002}
in order to characterize exactly when $\mathcal{R}_X$ intersects
a piece of a macroscopic fractal percolation set. 
This is the truly novel part of the present article,
and is likely to have other uses particularly in computing the ordinary and/or
large-scale dimension of complex random sets.
Finally, in \S\ref{sec:mc} we adapt an elegant replica method
of Peres \cite{Peres1996} 
to the present setting in order to compute
the dimension of $\mathcal{R}_X\cap F$ for every recurrent nonrandom set $F$.
In the section that follows this discussion [\S\ref{sec:Dim_M}] we
derive the following simpler and more elegant representation for the macroscopic
Minkowski dimension of an arbitrary random walk on $\Z^d$:
\begin{equation}
	\inf\left\{
	\gamma\in(0\,,d):\ \sum_{x\in\Z^d\setminus\{0\}} \frac{g(0\,,x)}{
	\|x\|^\gamma}<\infty\right\}
\end{equation}
In the last section \S\ref{sec:conj} we state a few remaining open
problems and related conjectures.

\section{ Background material}\label{sec:Backgd}
This section introduces the prerequisite material, necessary for later use.

\subsection{Macroscopic Hausdorff dimension}\label{subsec:Dim}
Throughout we follow the original notation of Barlow and Taylor
\cite{BT1989,BT1992} by setting
\begin{equation}\label{B:S}
	\cV_k:= [-2^k,\,2^k)^d,\quad
	\cS_0:=\cV_0,\quad
	\cS_{k+1} := \cV_{k+1}\setminus\cV_k,
\end{equation}
for all $k\ge 0$.
For every integer $n\in\Z$ define $\D_n$ to be
  the collection of all dyadic cubes $Q^{(n)}$ of the form
\begin{equation}\label{cube}
	Q^{(n)}:=[ j_12^n\,,(j_1+1)2^n)\times\cdots\times
	[j_d2^n\,,(j_d+1)2^n),
\end{equation}
where $j_1,\ldots,j_d\in\Z$
are integers.

If a cube $Q^{(n)}$ has the form \eqref{cube}, then
we say that $j:=(j_1,\ldots,j_d)$ is the \emph{south-west corner}
of $Q^{(n)}$, and $2^n$ is the \emph{sidelength} of $Q^{(n)}$.

By $\mathcal{D}$ we mean the collection of
all dyadic hypercubes of $\Z^d$; that is,
\begin{equation}
	\D:=\bigcup_{n=-\infty}^\infty\D_n.
\end{equation}
A special role is played by
\begin{equation}
	\D_{\ge 1}:=\bigcup_{n=0}^\infty
	\D_n.
\end{equation}
This is the collection of all dyadic cubes of sidelength
no smaller than $ 1$.

For every $\alpha \in (0\,,\infty)$ and  $A \subseteq \Z^d$ define  
\begin{equation}\label{eq:new:h}
	\mathcal{N}_{\alpha}(A\,, \cS_k) := 
	\min_{A \cap \cS_k \subseteq \bigcup_{i=1}^m Q_i} 
	\sum_{i=1}^{m} 2^{\alpha(\ell_i -k-1)},
\end{equation}
where the minimum is taken over all possible coverings of 
$A \cap \cS_k$ by cubes $Q_i$ of sidelength 
$2^{\ell_i} \ge 1$ and south-west corner
$2^{\ell_i} x_i \in 2^{\ell_i}\Z^d$ . All these cubes are elements of $\D_{\ge 1}$.

The \textit{macroscopic Hausdorff dimension} \cite{BT1989,BT1992}
of $A$ is defined by\footnote{%
	Barlow and Taylor
	\cite{BT1989,BT1992} wrote $\tilde{\nu}_\alpha$
	in place of our  $\mathcal{N}_\alpha$, and
	$\dim_{\rm H}$ in place of our $\Dim$. We prefer $\mathcal{N}_\alpha$
	as it reminds us that our $\mathcal{N}_\alpha$ is the large-scale analogue of
	Besicovitch's $\alpha$-dimensional net measures. And we use
	for $\Dim$ in favor of $\dim_{\rm H}$ to distinguish between large-scale and ordinary
	Hausdorff dimension.
	}
\begin{equation}\label{eq:Haus:dim}
		\Dim(A) := \inf\bigg\{ \alpha>0: \sum_{k=1}^\infty
		\cN_{\alpha}(A\,,\cS_k)< \infty\bigg\}.
\end{equation}

It is easy to see that if $A\subseteq B$, then
\begin{equation}
	0\le \Dim(A)\le\Dim(B)\le d\qquad
\end{equation}
The second seemingly-natural inequality was one of the motivations of
the theory of  \cite{BT1989,BT1992}, and does not hold
for some of the previously-defined candidates of large-scale dimension in the
literature \cite{Naudts,Naudts2}. 

Let $X$ denote the simple symmetric random walk on $\Z^d$
where $d\ge 3$.
According to the local central limit theorem,
$g(x\,,y)\sim\text{const}\cdot\|x-y\|^{2-d}$ as $\|x-y\|\to\infty$.
Therefore, Proposition \ref{pr:BT} applies and implies the very appealing
fact that the macroscopic Hausdorff dimension of the range of $X$ 
is a.s.\ $2$. 

Barlow and Taylor \cite{BT1992}
have proved that macroscopic Hausdorff dimension of
the range of transient Brownian motion is also a.s.\ 2, giving further credance to their assertion
that $\Dim$ is a natural large-scale variation of the classical
notion of [microscopic] Hausdorff dimension, nowadays usually denoted
by $\dim_{\rm H}$.

\subsection{Recurrent sets for Markov chains}\label{subsec:RecMC}
We follow the existing works on  related potential theory 
\cite{BPP1995,Bucy1965,FS1989,Lam1963},
and consider a somewhat more general setting in which our random
walk $X$ is replaced by a transient Markov chain, still denoted by $X$. However,
by contrast with the prior works
we continue to assume that our Markov chain $X$ takes values in 
the special state space $\Z^d$
for some $d\ge 1$, and not a general countable state space.

We will continue to write
$\mathcal{R}_X$ to stand for the range of the
Markov chain $X$. That is, $\mathcal{R}_X$ is the random set
$\cup_{n=0}^\infty\{X_n\}$.

Let $P^a$ continue to denote the conditional law of $X$, given $X_0=a$.
Then we say that a random or nonrandom set $F\subseteq\Z^d$ is \emph{recurrent}
for $X$ under $P^a$ when $\mathcal{R}_X\cap F$ is unbounded $P^a$-a.s.

We are aware of at least two characterizations of recurrent nonrandom sets
for general chains, due to Bucy \cite{Bucy1965} and Benjamini et al \cite{BPP1995}.
In order to describe the second characterization, which turns out to be more relevant to
our needs, let $M_1(F)$ denote the collection of all
probability measures on $F$, and $c_1(F\,;a)$  the 
\emph{Martin capacity} of $F$ for the walk started
at $a\in\Z^d$ \cite{BPP1995}. That is,
\begin{equation}\label{c1}
	c_1(F\,;a) := \sup_{\substack{F_0\subseteq F:\\
	F_0\text{ \rm  finite}}}\left[
	\inf_{\mu\in M_1(F)}\mathop{\sum\sum}_{\substack{x,y\in\Z^d:\\
	g(a,y)>0}}
	\frac{g(x\,,y)}{g(a\,,y)} \mu(x)\,\mu(y)\right]^{-1},
\end{equation}
where $\mu(w):=\mu(\{w\})$ for all $w\in\Z^d$.

Benjamini et al \cite{BPP1995} have characterized all recurrent
sets for transient Markov chains on countable state spaces. If we
apply their result to transient  Markov chains on $\Z^d$,
then we obtain the following:

\begin{proposition}[Benjamini, Pemantle, and Peres \cite{BPP1995}] 
	Choose and fix some $a\in\Z^d$.
	A nonrandom set $F\subseteq\Z^d$ is recurrent for
	$X$ under $P^a$ if and only if $\inf c_1(G\,;a)>0$, where
	the infimum is taken over all cofinite subsets $G$ of $F$.
\end{proposition}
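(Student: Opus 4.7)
The plan is to combine the Benjamini--Pemantle--Peres hitting estimate
\begin{equation}\label{eq:BPPhit}
\tfrac{1}{2}\,c_1(H;a)\;\le\; P^a\{\mathcal{R}_X\cap H\ne\emptyset\}\;\le\; c_1(H;a),\qquad H\subseteq\Z^d,
\end{equation}
with the exhaustion of $F$ by the decreasing family of cofinite subsets $G_n:=F\setminus[-n,n]^d$, and finally a tail zero--one argument that translates \emph{positive} hitting probability into \emph{almost sure} unboundedness of $\mathcal{R}_X\cap F$. The bound \eqref{eq:BPPhit} is the standard Paley--Zygmund/second-moment capacity estimate and would be quoted directly from \cite{BPP1995}.

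For the \emph{only if} direction, assume $\mathcal{R}_X\cap F$ is $P^a$-a.s.\ unbounded, and let $G\subseteq F$ be any cofinite subset. Since $F\setminus G$ is finite, transience of $X$ ensures that $\mathcal{R}_X\cap(F\setminus G)$ is $P^a$-a.s.\ bounded, so $\mathcal{R}_X\cap G=(\mathcal{R}_X\cap F)\setminus(\mathcal{R}_X\cap(F\setminus G))$ remains unbounded. In particular $P^a\{\mathcal{R}_X\cap G\ne\emptyset\}=1$, and the upper half of \eqref{eq:BPPhit} forces $c_1(G;a)\ge 1$. Taking the infimum over $G$ then yields $\inf_G c_1(G;a)\ge 1>0$.

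For the \emph{if} direction, set $\delta:=\inf_G c_1(G;a)>0$ and apply the lower half of \eqref{eq:BPPhit} to each $G_n$ to obtain $P^a\{\mathcal{R}_X\cap G_n\ne\emptyset\}\ge \delta/2$. The events $A_n:=\{\mathcal{R}_X\cap G_n\ne\emptyset\}$ decrease to $\bigcap_n A_n=\{\mathcal{R}_X\cap F\text{ is unbounded}\}$, so continuity from above gives $P^a\{\mathcal{R}_X\cap F\text{ is unbounded}\}\ge\delta/2$. Because this last event is invariant under any finite-time modification of the trajectory, a tail zero--one law promotes the lower bound $\delta/2$ to $1$, completing the proof. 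The main obstacle is precisely this upgrade from positive probability to probability one: for a genuine random walk on $\Z^d$ it is immediate from Hewitt--Savage applied to the i.i.d.\ increments, but in the slightly broader transient-Markov-chain framework of this subsection one instead has to iterate \eqref{eq:BPPhit} at the stopping times $\tau_n:=\inf\{k\ge 0:X_k\in G_n\}$, exploiting a uniform-in-starting-point version of the lower capacity estimate.
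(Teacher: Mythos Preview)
The paper does not supply its own proof of this proposition; it is stated as a citation of \cite{BPP1995} and immediately followed by the remark that ``the preceding capacity condition for cofinite sets is not so easy to verify in concrete settings.'' So there is no paper proof to compare against.

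Your sketch is essentially the standard derivation from the Benjamini--Pemantle--Peres hitting estimate, and it is sound. A couple of remarks. First, the paper itself later recovers (a version of) your estimate \eqref{eq:BPPhit} as the $p=1$ case of Theorem~\ref{th:RW:FP}, with constants $\tfrac12$ and $128$; so within the paper's logic one could also point there rather than back to \cite{BPP1995}. Second, your diagnosis of the zero--one step is accurate: for the random-walk setting that is the paper's real concern, the event $\{\mathcal{R}_X\cap F\text{ unbounded}\}$ lies in the exchangeable $\sigma$-algebra of the increments and Hewitt--Savage finishes the job, exactly as you say. In the broader transient-Markov-chain framework of \S\ref{subsec:RecMC} this upgrade is genuinely more delicate, and your proposed workaround via iteration at stopping times would need a uniform-in-starting-point lower bound on $c_1(G_n;\,\cdot\,)$ that does not follow from the hypotheses alone; the cleanest general route is instead to observe that $h(x):=P^x\{X_n\in F\text{ i.o.}\}$ is invariant under the transition operator, so $h(X_n)$ is a bounded martingale converging to $\1_{\{X_n\in F\text{ i.o.}\}}$, and then invoke tail triviality for the class of chains under consideration. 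Since the paper only cites the result, none of this constitutes a discrepancy.
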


Recall that a set $G\subset\Z^d$
is said to be \emph{cofinite} when $\Z^d\setminus G$ is a bounded set.

The preceding capacity condition for cofinite sets is not so easy to verify in
concrete settings. There is an older result, due to
Lamperti \cite{Lam1963}, which contains a more easily-applicable 
characterization of recurrent
sets for ``nice'' Markov chains. The following is a slightly different
formulation that works specifically for transient chains on $\Z^d$.
Barlow and Taylor \cite[Proposition 8.2]{BT1992} state a special case
of it by adapting Lamperti's method [see Example \ref{ex:RW:nice} below].
We derive it later on as a corollary of a ``master theorem'' on hitting
probabilities of transient chains on $\Z^d$ [Theorem \ref{th:RW:FP}].

\begin{corollary}[Lamperti's test]\label{cor:Lamperti}
	Suppose that there exist $a\in\Z^d$ and a finite constant $K>0$ 
	such that for all $n\geq K$ and $m\geq n+K$, 
	\begin{equation}\label{cond:Lamperti}
		\sup_{\substack{x\in\cS_n\\y\in\cS_m}} \frac{g(x\,,y)}{g(a\,,y)}+
		\sup_{\substack{x\in\cS_m\\y\in\cS_n}} \frac{g(x\,,y)}{g(a\,,y)}\leq K.
	\end{equation}
	Then, $F$ is recurrent for $X$ under $P^a$  if and only if
	$\sum_{k=0}^\infty c_1(F\cap\cS_k;a)=\infty$.
\end{corollary}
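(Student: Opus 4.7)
The plan is to leverage the Benjamini--Pemantle--Peres characterization recalled just before the corollary and to translate the recurrence question into a quantitative comparison between $c_1(F;a)$ and the shell-sum $\sum_k c_1(F\cap\cS_k\,;a)$. Since every cofinite subset of $F$ agrees with $F$ on all but finitely many shells, it suffices to prove an unconditional upper bound $c_1(F;a)\le\sum_k c_1(F\cap\cS_k\,;a)$ together with a lower bound $c_1(F;a)\ge 1/K$ under \eqref{cond:Lamperti} whenever the shell-sum diverges.

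For the upper bound I would write $F_k:=F\cap\cS_k$, $c_k:=c_1(F_k\,;a)$, and decompose any $\mu\in M_1(F)$ as $\mu=\sum_k p_k\nu_k$ with $p_k:=\mu(F_k)$ and $\nu_k:=\mu|_{F_k}/p_k$. Dropping the nonnegative cross terms in the energy integral and using $\iint\frac{g(x,y)}{g(a,y)}\,\d\nu_k\,\d\nu_k\ge 1/c_k$ yields
\[
\iint\frac{g(x,y)}{g(a,y)}\,\d\mu\,\d\mu\ \ge\ \sum_k p_k^2/c_k\ \ge\ \frac{1}{\sum_k c_k},
\]
the last step by Cauchy--Schwarz. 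Applied to the cofinite set $G_N:=F\setminus\cV_N$ this forces $c_1(G_N\,;a)\to 0$ whenever $\sum_k c_k<\infty$, proving the ``only if'' implication without using \eqref{cond:Lamperti}.

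The hard direction requires building a probability measure on $F$ whose energy is bounded under \eqref{cond:Lamperti}, and the key idea is to coarsen the shells into well-separated super-shells so that \eqref{cond:Lamperti} governs every cross term. I would set $T_j:=\bigcup_{i=2jK}^{2jK+K-1}\cS_i$ for $j\ge 1$, so that any shell in $T_j$ and any shell in $T_{j'}$ ($j\ne j'$) have indices that differ by at least $K+1$ and both exceed $K$; hypothesis \eqref{cond:Lamperti} then bounds $g(x,y)/g(a,y)$ by $K$ pointwise on $T_j\times T_{j'}$. A parity pigeonhole (the sum over either the even- or odd-phase super-shells must diverge, and within each super-shell the capacity $C_j:=c_1(F\cap T_j\,;a)$ dominates $\max_{\cS_k\subseteq T_j}c_k$ by monotonicity) gives $\sum_j C_j=\infty$. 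Pick near-optimal $\nu_j\in M_1(F\cap T_j)$ with energy at most $2/C_j$ and set $\mu^{(N)}:=S_N^{-1}\sum_{j<N}C_j\nu_j$, $S_N:=\sum_{j<N}C_j$. Separating diagonal from off-diagonal contributions then bounds the total energy by
\[
\iint\frac{g(x,y)}{g(a,y)}\,\d\mu^{(N)}\,\d\mu^{(N)}\ \le\ \frac{1}{S_N^2}\Bigl(\sum_j 2C_j+K\sum_{j\ne j'}C_jC_{j'}\Bigr)\ \le\ \frac{2}{S_N}+K,
\]
so sending $N\to\infty$ yields $c_1(F\,;a)\ge 1/K$. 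Since the super-shell capacities of any cofinite $G\subseteq F$ agree with those of $F$ for all but finitely many $j$, the same estimate gives $\inf_G c_1(G\,;a)\ge 1/K>0$, so $F$ is recurrent by BPP.

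The main obstacle is that \eqref{cond:Lamperti} bounds $g(x,y)/g(a,y)$ only for shells whose indices are separated by at least $K$, so the naive shell-by-shell energy argument leaves the near-diagonal cross terms $\iint\frac{g(x,y)}{g(a,y)}\,\d\nu_k\,\d\nu_l$ with $0<|k-l|<K$ uncontrolled. The super-shell coarsening, with block-length $K$ and a matching gap of $K$, is the cleanest way I see to push every cross-pair into the regime where \eqref{cond:Lamperti} applies directly, at the cost of only a constant factor depending on $K$ in the final capacity estimate. Note that \eqref{cond:Lamperti} is invoked only for the lower bound; the subadditivity in step~(i) holds unconditionally.
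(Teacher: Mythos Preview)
Your argument is correct and takes a genuinely different route from the paper's. The paper derives this corollary as an application of its main hitting-probability estimate, Theorem~\ref{th:RW:FP}: setting $p=1$ there yields $P^z\{T_k<\infty\}\asymp c_1(F\cap\cS_k\,;z)$, after which the ``only if'' direction is plain Borel--Cantelli, and the ``if'' direction is a second-moment (Kochen--Stone) computation on the indicators $\1_{\{T_k<\infty\}}$, with the strong Markov property producing cross terms of the form $P^a\{T_k<\infty\}\sup_{z\in\cS_k}c_1(F\cap\cS_j\,;z)$ and hypothesis~\eqref{cond:Lamperti} bounding those suprema. You instead stay entirely on the potential-theoretic side and feed the Benjamini--Pemantle--Peres criterion directly: the subadditivity $c_1(F\,;a)\le\sum_k c_1(F\cap\cS_k\,;a)$ comes from dropping nonnegative cross-energy and Cauchy--Schwarz, and for the converse you build a low-energy measure by hand, with the super-shell coarsening (block length $K$, gap $K$) pushing every off-diagonal pair into the regime where~\eqref{cond:Lamperti} applies. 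Your approach is more elementary in that it avoids the fractal-percolation and forest machinery underlying Theorem~\ref{th:RW:FP}; the paper's approach has the virtue of exhibiting Lamperti's test as an immediate by-product of their general capacity--hitting estimate, which is the organizing theme of the article. Both proofs use~\eqref{cond:Lamperti} only for the harder implication, as you note.
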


\begin{example}\label{ex:RW:nice}
	Suppose that $X$ is a random walk that satisfies
	Condition \eqref{cond:BT} of Proposition \ref{pr:BT}. It readily
	follows that $g(0\,,y)>0$ for all $y\neq 0$,
	and\footnote{As usual, we write
	``$f(z)\asymp g(z)$ for all $z\in Z$'' to mean that 
	there exists a positive and finite constant $C$
	such that $C^{-1} g(z) \le f(z)\le Cg(z)$ for all $z\in Z$.}
	\begin{equation}
		\mathop{\sum\sum}_{x,y\in\cS_k}
		\frac{g(x\,,y)}{g(0\,,y)} \mu(x)\,\mu(y)
		\asymp 2^{k(d-\alpha)}
		\mathop{\sum\sum}_{x,y\in\cS_k} g(x\,,y)\,\mu(x)\,\mu(y),
	\end{equation}
	simultaneously for all integers $k\ge 0$ and $\mu\in M_1(F)$. Therefore, 
	$c_1(F\cap \cS_k\,;0) \asymp 2^{-k(d-\alpha)}
	\text{cap}_g(F\cap\cS_k)$ for all $k\ge 0$,
	where 
	\begin{equation}
		\text{cap}_g(G) := \left[\inf_{\mu\in M_1(G)}
		\mathop{\sum\sum}_{x,y\in G} g(x,y)\,\mu(x)\,\mu(y)
		\right]^{-1}
	\end{equation}
	describes the usual random-walk capacity of $G\subset\Z^d$.\footnote{%
	Standard last-exit arguments, and/or maximum principle arguments,
	show that our ``$\text{cap}_g$''
	is the same capacity form as Lamperti's ``$C$''
	\cite{Lam1963} and Barlow and Taylor's ``$\text{Cap}_G$''
	\cite{BT1992}. This fact can be found implicitly in
	Bucy \cite{Bucy1965}, and might be older still.}
	It is easy to see that the Lamperti-type condition \eqref{cond:Lamperti}
	also holds in this case since the walk has i.i.d.\ increments. 
	Therefore, Corollary \ref{cor:Lamperti} tells us
	that $F$ is recurrent for $X$ under $P^0$ iff 
	$\sum_{k=0}^\infty 2^{-k(d-\alpha)} \text{cap}_g(F\cap\cS_k)=\infty$.
	This is Proposition 8.2 of \cite{BT1992}.
\end{example}

\section{Macroscopic fractal percolation}
\label{sec:fp}

We temporarily leave the topic of Markov chains and random walks  
in order to present some basic facts about macroscopic fractal percolation.

Let $k\ge 0$ denote a fixed integer,
and suppose $\{U(Q)\}_{Q\in\D\cap\cV_k}$ is a collection of independent
random variables, defined on a rich enough probability space
$(\Omega\,,\mathcal{F},\P)$, such that each $U(Q)$
is distributed uniformly between $0$ and $1$.
We may define, for all $p\in(0\,,1]$,
\begin{equation}
	\label{eq:ind}
	I_p(Q) := \1_{(0,p)} (U(Q)).
\end{equation}
Then: 
\begin{enumerate}
\item[(i)] $\{I_p(Q)\}_{Q\in\D\cap\cV_k}$ are i.i.d.;
\item[(ii)] $\P\{I_p(Q)=1\}=p$ and $\P\{I_p(Q)=0\}=1-p$; and
\item[(iii)] $I_{p_1}(Q)\le I_{p_2}(Q)$ if $p_1\le p_2$.
\end{enumerate}

For all integers $n\ge 0$ define 
\begin{equation}
	\Pi_{p,n}(\cV_k)  := \bigg\{ Q\in\D_{k-n}: \, Q\subseteq\cV_k,\
	I_p(Q')=1
	\text{ if }Q\subseteq Q'\in\bigcup_{j=k-n}^{k+1}\D_j\bigg\}.
\end{equation}
We also abuse notation to define 
$\Pi_{p,-1}(\cV_k) := \cV_k$.  In this way, we see for example that $\Pi_{p,0}$ 
is a random set of cubes with sidelength $2^k$ which  
result from the \textit{first step} of a certain branching process;
see Figure \ref{fig:treerep}.

\emph{Fractal percolation} on $\cV_k$ [with parameter $p\in(0\,,1]$]
is the random set
\begin{equation}
\label{eq:piinf}
	\Pi_{p,\infty}(\cV_k) := \bigcap_{n=0}^\infty\Pi_{p,n}(\cV_k).
\end{equation}
One can see that this is the usual construction of Mandelbrot's
fractal percolation \cite{Mandelbrot},
scaled to take place in the cube $\cV_k$. Namely, we may write $\cV_k$ as a disjoint
union of $2^d$ elements of $\D_k$; each of those elements
is selected independently 
with probability $p$ and rejected with probability $1-p$. We then write
every one of the selected cubes as a disjoint union of $2^d$ elements
of  $\D_{k-1}$; each resulting
subcube is kept/selected or discarded/deselected independently with respective probabilities
$p$ and $1-p$; and we continue. 

Elementary branching-process theory implies
that $\P\{\Pi_{p,\infty}(\cV_k)\neq\varnothing\}>0$ if and only if $p>2^{-d}$;
see also Figures \ref{fig:cubeperc0}, \ref{fig:sim0.8}, and \ref{fig:treerep}.

Presently, we are interested in performing fractal percolation in $\cV_k$ but we
will stop the subdivisions after $k+1$ steps. In other words, we are interested
in $\Pi_{p,k}(\cV_k)$, which is a random, possibly empty, collection of
side-one cubes in $\D_0$. Since $I_{p_1}(Q)\le
I_{p_2}(Q)$ whenever $p_1\le p_2$, we see that 
$\Pi_{p_1,k}(\cV_k)\subseteq\Pi_{p_2,k}(\cV_k)$ a.s., and hence
if $p_1\le p_2$, then
\begin{equation}
	\P\{ \Pi_{p_1,k}(\cV_k)\cap F\neq\varnothing\}
	\le \P\{ \Pi_{p_2,k}(\cV_k)\cap F\neq\varnothing\}
\end{equation}
for all Borel sets $F\subset\R^d$.

Now let us construct all of these fractal percolations on the same probability space
so that: 
	\begin{enumerate}
		\item[(i)]  $\Pi_{p,k}(\cV_k)$ is a fractal percolation in $\cV_k$ for
			every $k\ge 0$ as decsribed earlier,
		\item[(ii)] $\Pi_{p,0}(\cV_0),\,\Pi_{p,1}(\cV_1)\,,\cdots$
			are independent. 
	\end{enumerate}
In words, we appeal to the preceding procedure in order to construct the
$\Pi_{p,k}(\cV_k)$'s simultaneously for all $k$, using an independent collection of weights
$I_p(Q)$'s for every $\cV_k$.

By \emph{macroscopic fractal percolation} we mean the random set
\begin{equation}
	\Pi_p := \bigcup_{k=0}^\infty (\Pi_{p,k}(\cV_k)\cap \cS_k).
\end{equation}

\begin{figure}[h!]
	\begin{center}
		\begin{tikzpicture}
			\fill[ color=nicos-red](0,0)rectangle(4,4);
			\draw[fill, color=white](0,2)rectangle(2,4);
			\draw(0,0)rectangle(4,4);
			\draw(0,2)--(4,2);
			\draw(2,0)--(2,4);
		\end{tikzpicture}
		\hspace{0.5 in}
		\begin{tikzpicture}
			\fill[ color=nicos-red](0,0)rectangle(4,4);
			\draw[fill, color=white](0,2)rectangle(2,4);
			
			\draw[fill, color=white](0,1)rectangle(2,2);
			\draw[fill, color=white](2,0)rectangle(3,1);
			\draw[fill, color=white](2,2)rectangle(3,3);
			\draw[fill, color=white](3,3)rectangle(4,4);
			
			\draw(0,0)rectangle(4,4);
			\draw(0,2)--(4,2);
			\draw(2,0)--(2,4);
			
			\draw(0,1)--(4,1);
			\draw(3,0)--(3,4);
			\draw(2,3)--(4,3);
			\draw(1,0)--(1,2);
		\end{tikzpicture}
		\vspace{0.25 in}
		
		\begin{tikzpicture}
			\fill[ color=nicos-red](0,0)rectangle(4,4);
			\draw[fill, color=white](0,2)rectangle(2,4);
			
			\draw[fill, color=white](0,1)rectangle(2,2);
			\draw[fill, color=white](2,0)rectangle(3,1);
			\draw[fill, color=white](2,2)rectangle(3,3);
			\draw[fill, color=white](3,3)rectangle(4,4);
			
			\draw[fill, color=white](0,0.5)--(0,1)--(1,1)--(1,0)--(0.5,0)--(0.5,0.5)--(0,0.5);
			\draw[fill, color=white](1,0)rectangle(2,.5);
			\draw[fill, color=white](3,0)--(3,.5)--(3.5,.5)--(3.5,1)--(4,1)--(4,0)--(3,0);
			\draw[fill, color=white](2,1)--(2,2)--(2.5,2)--(2.5,1.5)--(3.5,1.5)--(3.5,1)--(2,1);
			\draw[fill, color=white](3,2)rectangle(3.5,3);
			\draw[fill, color=white](2.5,3.5)rectangle(3,4);

			\draw(0,0)rectangle(4,4);
			\draw(0,2)--(4,2);
			\draw(2,0)--(2,4);
			
			\draw(0,1)--(4,1);
			\draw(3,0)--(3,4);
			\draw(2,3)--(4,3);
			\draw(1,0)--(1,2);
			
			\draw(0,0.5)--(2,.5) (3,.5)--(4,.5);
			\draw(2,1.5)--(4,1.5);
			\draw(3,2.5)--(4,2.5);
			\draw(2,3.5)--(3,3.5);
			
			\draw(0.5,0)--(.5,1);
			\draw(1.5,0)--(1.5,1);
			\draw(2.5,1)--(2.5,2) (2.5, 3)--(2.5,4);
			\draw(3.5,0)--(3.5,3);

		\end{tikzpicture}
		\hspace{0.5 in}
		\begin{tikzpicture}
			\fill[ color=nicos-red](0,0)rectangle(4,4);
			\draw[fill, color=white](0,2)rectangle(2,4);
			
			\draw[fill, color=white](0,1)rectangle(2,2);
			\draw[fill, color=white](2,0)rectangle(3,1);
			\draw[fill, color=white](2,2)rectangle(3,3);
			\draw[fill, color=white](3,3)rectangle(4,4);
			
			\draw[fill, color=white](0,0.5)--(0,1)--(1,1)--(1,0)--(0.5,0)--(0.5,0.5)--(0,0.5);
			\draw[fill, color=white](1,0)rectangle(2,.5);
			\draw[fill, color=white](3,0)--(3,.5)--(3.5,.5)--(3.5,1)--(4,1)--(4,0)--(3,0);
			\draw[fill, color=white](2,1)--(2,2)--(2.5,2)--(2.5,1.5)--(3.5,1.5)--(3.5,1)--(2,1);
			\draw[fill, color=white](3,2)rectangle(3.5,3);
			\draw[fill, color=white](2.5,3.5)rectangle(3,4);
			
			\draw[fill=white](0.25,0.25)rectangle(0.5,0.5);
			\draw[fill=white](1.25,0.75)rectangle(1.75,1);
			\draw[fill=white](3.25,0.75)rectangle(3.5,0.5);
			\draw[fill=white](2.75,2)rectangle(3,1.5);
			\draw[fill=white](3.5,1.5)rectangle(3.75,1.75);
			\draw[fill=white](3.5,2.5)rectangle(4,3);
			\draw[fill=white](2.5,3.75)--(2.25,3.75)--(2.25,3.25)--(2.75, 3.25)--(2.75,3.5)--(2.5,3.5)--(2.5,3.75);

			\draw(0,0)rectangle(4,4);
			\draw(0,2)--(4,2);
			\draw(2,0)--(2,4);
			
			\draw(0,1)--(4,1);
			\draw(3,0)--(3,4);
			\draw(2,3)--(4,3);
			\draw(1,0)--(1,2);
			
			\draw(0,0.5)--(2,.5) (3,.5)--(4,.5);
			\draw(2,1.5)--(4,1.5);
			\draw(3,2.5)--(4,2.5);
			\draw(2,3.5)--(3,3.5);
			
			\draw(0.5,0)--(.5,1);
			\draw(1.5,0)--(1.5,1);
			\draw(2.5,1)--(2.5,2) (2.5, 3)--(2.5,4);
			\draw(3.5,0)--(3.5,3);
			
			\draw(3.75,2.5)--(3.75,3);
			\draw(3.5,2.75)--(4,2.75);
			\draw(2.75,1.75)--(3,1.75);
		\end{tikzpicture}
	\end{center}
	\caption{An image of a simulation of the last 4 of the 5 stages of the construction
	of fractal percolation in $\cV_3$, where $p=\nicefrac12$ in this case. The first stage is
	omitted: That stage shows all of $\cV_3$ colored in. 
	The common width of the white cubes encodes the stage at which the cube was deleted.}
	\label{fig:cubeperc0}
\end{figure}
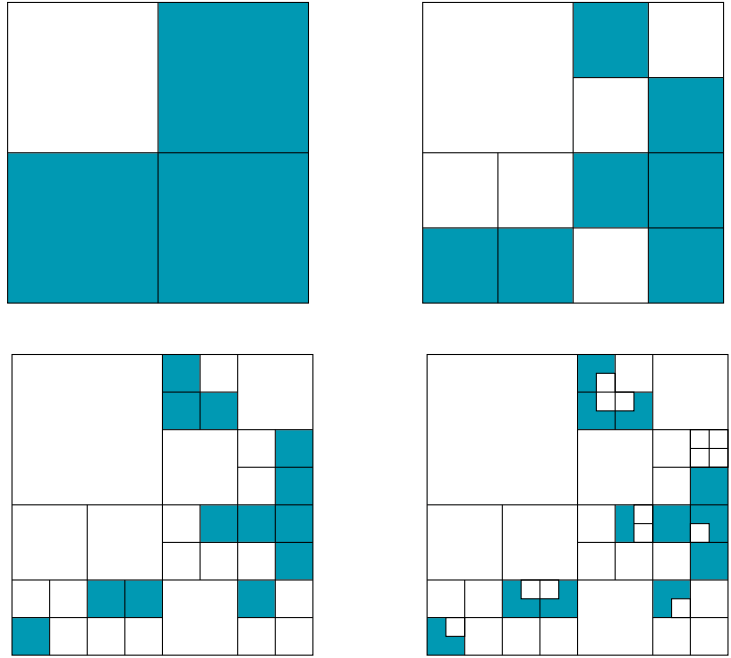

\begin{figure}[h!]
\begin{center}
{\includegraphics[width=6.5cm,height=6.5cm]{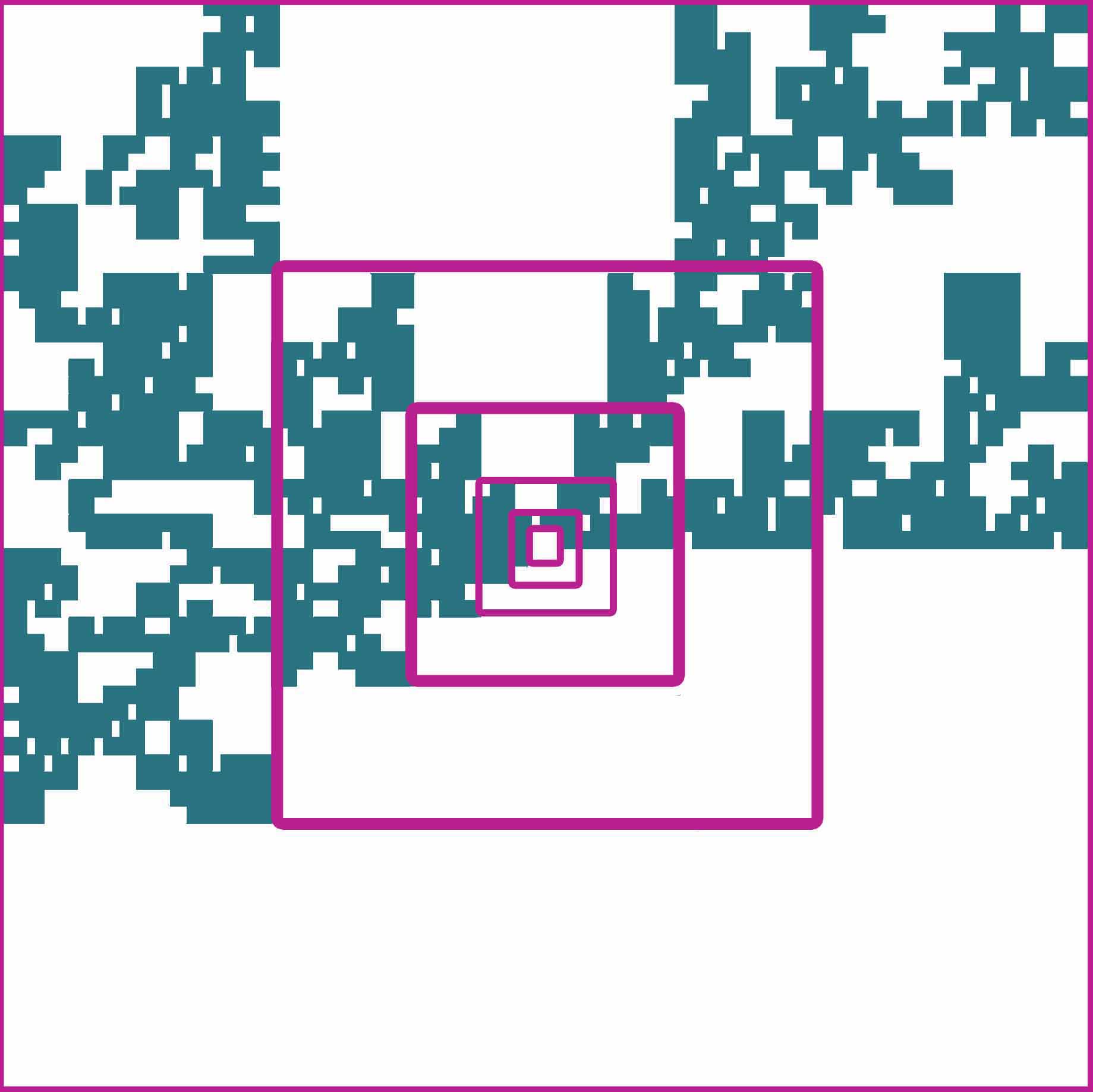}}
\end{center}
\caption{ A simulation of the percolation cluster (dark region) for $\Pi_{p}\cap\cV_5$ for $p=0.8$.
Shells are independent among shells. The nested squares delineate the cubical shells
sets $\cS_0,~\cS_1,~\cS_2,~\cS_3,~\cS_4$ and $\cS_5$. With the exception of $\cS_0$ 
all other shells $\cS_k$ are cubical annuli. Percolation in different cubical annuli results from independent branching processes.}
\label{fig:sim0.8}
\end{figure}

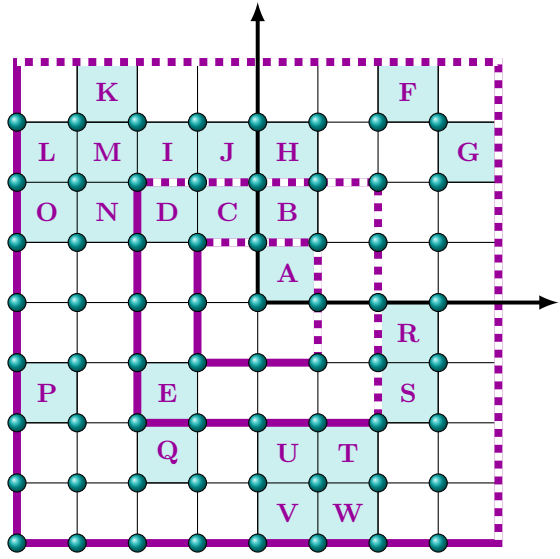
\begin{figure}[h!]
		\begin{center}
			\begin{tikzpicture}[>=latex,scale=0.8]
				\definecolor{sussexg}{rgb}{0,0.7,0.7}
				\definecolor{sussexp}{rgb}{0.6,0,0.6} 
				
				
				\draw[fill = sussexg!20] (0,0)rectangle(1,1);
				\draw[fill = sussexg!20] (2,3)rectangle(3,4);
				\draw[fill = sussexg!20] (-2,-2)rectangle(-1,-1);
				\draw[fill = sussexg!20] (-2,-3)rectangle(-1,-2);
				\draw[fill = sussexg!20] (0,-4)rectangle(2,-2);
				\draw[fill = sussexg!20] (2,-2)rectangle(3,0);
				\draw[fill = sussexg!20] (3,2)rectangle(4,3);
				\draw[fill = sussexg!20] (-4,1)rectangle(1,3);
				\draw[fill = sussexg!20] (-3,3)rectangle(-2,4);
				\draw[fill = sussexg!20] (-4,-2)rectangle(-3,-1);
				\draw[fill = sussexg!20] (0,0)rectangle(1,1);
				\draw[fill = sussexg!20] (0,0)rectangle(1,1);
				\draw[fill = sussexg!20] (0,0)rectangle(1,1);

			\foreach \x in {-4,...,4}{\draw(\x, -4)--(\x,4);}
				\foreach \x in {-4,...,4}{\draw( -4,\x)--(4,\x);}

			
%
				
				\draw[color=sussexp, line width=3pt](-4,-4)rectangle(4,4);
			
				\draw[color=sussexp, line width=3pt](-2,-2)rectangle(2,2);
				
				\draw[color=sussexp, line width=3pt](-1,-1)rectangle(1,1); 
				
				\draw[color=white, dashed, line width=3pt](-1,1)--(1,1)--(1,-1); 
				\draw[color=white, dashed, line width=3pt](-2,2)--(2,2)--(2,-2); 
				\draw[color=white, dashed, line width=3pt](-4,4)--(4,4)--(4,-4); 
				
				\draw[<->, line width=1.5pt](5,0)--(0,0)--(0,5);
				
				\foreach \x in {-4,...,3}{
					\foreach \y in {-4,...,3}{ 
					\draw[shading=ball, ball color=sussexg](\x,\y) circle (1.5mm);
					}}
					
				
				\draw(0.5,0.5)node[color=sussexp]{\textbf{A}};
				\draw(0.5,1.5)node[color=sussexp]{\textbf{B}};
				\draw(-0.5,1.5)node[color=sussexp]{\textbf{C}};
				\draw(-1.5,1.5)node[color=sussexp]{\textbf{D}};
				\draw(-1.5,-1.5)node[color=sussexp]{\textbf{E}};
				\draw(3.5,2.5)node[color=sussexp]{\textbf{G}};
				\draw(2.5,3.5)node[color=sussexp]{\textbf{F}};
				\draw(0.5,2.5)node[color=sussexp]{\textbf{H}};
				\draw(-1.5,2.5)node[color=sussexp]{\textbf{I}};
				\draw(-0.5,2.5)node[color=sussexp]{\textbf{J}};
				\draw(-2.5,3.5)node[color=sussexp]{\textbf{K}};
				\draw(-3.5,2.5)node[color=sussexp]{\textbf{L}};
				\draw(-2.5,2.5)node[color=sussexp]{\textbf{M}};
				\draw(-2.5,1.5)node[color=sussexp]{\textbf{N}};
				\draw(-3.5,1.5)node[color=sussexp]{\textbf{O}};
				\draw(-3.5,-1.5)node[color=sussexp]{\textbf{P}};
				\draw(-1.5,-2.5)node[color=sussexp]{\textbf{Q}};
				\draw(1.5,-2.5)node[color=sussexp]{\textbf{T}};
				\draw(0.5,-2.5)node[color=sussexp]{\textbf{U}};
				\draw(0.5,-3.5)node[color=sussexp]{\textbf{V}};
				\draw(1.5,-3.5)node[color=sussexp]{\textbf{W}};
				\draw(2.5,-0.5)node[color=sussexp]{\textbf{R}};
				\draw(2.5,-1.5)node[color=sussexp]{\textbf{S}};
					
			\end{tikzpicture}
		\end{center}	
	\caption{\emph{Fractal percolation schematic.} Each square 
	is indexed by its lower left, south-west, corner. 
	Percolation is independent between the thickset cubical annuli, 
	and is the result of a coupled branching process in each cube $\cV_k$; i.e.,
	the random sets $\{ A \}$, $\{ B,\ldots, E\}$ and $\{ F, G, \ldots, W \}$ 
	are the result of independent branching processes. 
	\emph{The enumeration scheme.} The annuli are enumerated by their index;
	this indexing forces a partial ordering for the cubes in each shell. 
	Thus, cubes in $\cS_0$ are enumerated before those in $\cS_1$ and so on. 
	Percolation in shell $\cS_k$ is the result of percolation in $\cV_k$ 
	according to the description that follows \eqref{eq:piinf}. 
	Each $\cV_k$ is subdivided into four cubes which we enumerate counterclockwise, 
	starting from the one that corresponds to the the first quadrant. 
	Each of those is divided into four cubes; those are again enumerated counterclockwise. 
	This enumeration procedure is continued until we are left with cubes of size 1. 
	which are now (for the purposes of the figures) enumerated lexicographically. 
	This enumeration scheme of size one cubes is the isomorphism between 
	the percolation set and a forest, and its further illustrated in Figure \ref{fig:forest}.
	}
	\label{fig:cubeperc}
\end{figure}

Of course, $\Pi_0=\varnothing $ and $\Pi_1=\R^d$. 
Starting from here, we often assume tacitly that
$p\in(0\,,1)$ in order to avoid the trivial cases $p=0$ and $p=1$. 
In any case, it is easy to deduce our next result.

\begin{lemma}\label{lm:basic}
	The following are valid:
	\begin{enumerate}
	\item $\Pi_{p_1}\subseteq\Pi_{p_2}$ whenever $p_1\le p_2$;
	\item $\Pi_p\cap\cS_0,
		\Pi_p\cap\cS_1,\Pi_p\cap\cS_2,\ldots$ are independent random sets;
	\item $\Pi_p\cap\cS_k$ is distributed as 
		$\Pi_{p,k}(\cV_k)\cap\cS_k$ for every integer $k\ge 0$.
	\end{enumerate}
\end{lemma}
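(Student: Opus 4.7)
The plan is to read off all three assertions directly from the coupled construction built on the uniform variables $U(Q)$, since the lemma is essentially a bookkeeping summary of that construction. I would not expect any genuine obstacle; the only care needed is to track which collections of weights are shared and which are independent.

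For part (1), I would use the pointwise, $\omega$-by-$\omega$ monotonicity $I_{p_1}(Q) \le I_{p_2}(Q)$ whenever $p_1 \le p_2$, which is immediate from the definition $I_p(Q) = \1_{(0,p)}(U(Q))$. Fix $k$ and $n$, and unwind the definition of $\Pi_{p,n}(\cV_k)$: a cube $Q \in \D_{k-n}$ belongs to $\Pi_{p_1,n}(\cV_k)$ iff $I_{p_1}(Q') = 1$ for every ancestor $Q' \in \bigcup_{j=k-n}^{k+1}\D_j$ containing $Q$. Each such equality forces $I_{p_2}(Q') = 1$ by monotonicity, so $Q \in \Pi_{p_2,n}(\cV_k)$. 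Hence $\Pi_{p_1,n}(\cV_k) \subseteq \Pi_{p_2,n}(\cV_k)$, and in particular $\Pi_{p_1,k}(\cV_k) \subseteq \Pi_{p_2,k}(\cV_k)$. Intersecting with $\cS_k$ and taking the union over $k \ge 0$ gives $\Pi_{p_1} \subseteq \Pi_{p_2}$.

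For part (2), the construction stipulates that the families $\{I_p(Q)\}_{Q\in\D\cap\cV_k}$ for distinct values of $k$ are built from independent collections of uniform weights. Consequently the random sets $\Pi_{p,0}(\cV_0), \Pi_{p,1}(\cV_1), \ldots$ are mutually independent, as each $\Pi_{p,k}(\cV_k)$ is a measurable function of only the weights associated with $\cV_k$. Because each $\cS_k$ is deterministic, intersecting preserves independence, so the sets $\Pi_{p,k}(\cV_k) \cap \cS_k$ are also mutually independent; these are exactly the pieces of $\Pi_p$ living in the respective shells.

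For part (3), the shells $\cS_0, \cS_1, \ldots$ are pairwise disjoint by \eqref{B:S}. Hence
\begin{equation*}
  \Pi_p \cap \cS_j \;=\; \bigcup_{k=0}^{\infty} \bigl(\Pi_{p,k}(\cV_k) \cap \cS_k \cap \cS_j\bigr) \;=\; \Pi_{p,j}(\cV_j) \cap \cS_j,
\end{equation*}
an almost-sure set equality that trivially yields the claimed distributional identity. Combined with part (2), this also produces the joint law of $\Pi_p$: an independent product of the marginal laws of the $\Pi_{p,k}(\cV_k) \cap \cS_k$. The proof is thus essentially a verification that the coupling described just before the lemma has been set up to deliver these three properties.
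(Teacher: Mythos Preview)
Your proposal is correct and is exactly the kind of direct verification the paper has in mind: the paper does not actually supply a proof of this lemma, stating only that ``it is easy to deduce our next result'' from the coupled construction. Your argument simply makes explicit the bookkeeping the authors leave to the reader, and each step matches the properties (i)--(iii) of the $I_p(Q)$'s and the independence across shells that the paper records just before the lemma.
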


If $A$ and $B$ are subsets of $\R^d$, then we say that $A$ is
a \emph{recurrent set} for $B$ provided that there exist infinitely many shells
$\cS_{k_1},\cS_{k_2},\ldots$ such that $A\cap B\cap\cS_{k_n}\neq\varnothing$
for all $n\ge 1$. We sometimes write ``$A\Recurrent  B$\,'' in place of ``$A$ is
a recurrent set for $B$\,''; clearly, $\Recurrent $ is an equivalence relation
between pairs of subsets of $\R^d$.

\begin{lemma}\label{lem:0-1}
	If $F\subset\R^d$ is non random, then $\P\{F\Recurrent \Pi_p\}=0$
	or $1$.
\end{lemma}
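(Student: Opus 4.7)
The plan is to recognize the event $\{F \Recurrent \Pi_p\}$ as a $\limsup$ of independent events and then invoke Borel--Cantelli. Concretely, for each integer $k \ge 0$ set
\begin{equation}
A_k := \{F \cap \Pi_p \cap \cS_k \neq \varnothing\}.
\end{equation}
By the definition of $\Recurrent$, we have $\{F \Recurrent \Pi_p\} = \limsup_{k\to\infty} A_k$, since $F \Recurrent \Pi_p$ holds iff there are infinitely many shells $\cS_k$ for which $F \cap \Pi_p \cap \cS_k \neq \varnothing$.

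Next, I would observe that because $F$ is nonrandom, the event $A_k$ is measurable with respect to the $\sigma$-algebra generated by $\Pi_p \cap \cS_k$. Lemma \ref{lm:basic}(2) asserts that the random sets $\Pi_p \cap \cS_0, \Pi_p \cap \cS_1, \ldots$ are independent; therefore the events $A_0, A_1, \ldots$ are independent.

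Finally, because the $A_k$'s are independent, the Borel--Cantelli lemmas imply that
\begin{equation}
\P\{F \Recurrent \Pi_p\} = \P\!\left\{\limsup_{k\to\infty} A_k\right\} \in \{0,1\},
\end{equation}
with the value being $0$ if $\sum_k \P(A_k) < \infty$ and $1$ otherwise. (Alternatively, one could invoke Kolmogorov's 0--1 law directly, since $\limsup_k A_k$ is a tail event for the independent sequence $(\1_{A_k})_{k\ge 0}$.)

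There is essentially no obstacle here; the entire content of the lemma lies in the shell-independence built into the construction of $\Pi_p$ and already recorded in Lemma \ref{lm:basic}(2). The one point to handle carefully is verifying that $A_k \in \sigma(\Pi_p \cap \cS_k)$, which is immediate once one remembers that $F$ is nonrandom.
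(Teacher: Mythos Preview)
Your proof is correct and essentially identical to the paper's: the paper defines indicators $\zeta_k := \1\{F\cap\Pi_p\cap\cS_k\neq\varnothing\}$, observes they are independent, and applies Kolmogorov's 0--1 law to $\{\sum_k\zeta_k=\infty\}$. The only cosmetic difference is that you phrase things in terms of the events $A_k$ and mention Borel--Cantelli first, whereas the paper goes straight to Kolmogorov.
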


\begin{proof}
	Let $\zeta_k=1$ if $F\cap \Pi_p\cap \cS_k\neq\varnothing$
	and $\zeta_k=0$ otherwise. Then, the $\zeta_k$'s are independent and
	\begin{equation}
		\P\{F\Recurrent \Pi_p\}=\P\left\{
		\sum_{k=1}^\infty\zeta_k=\infty\right\}\in\{0\,,1\},
	\end{equation}
	by Kolmogorov's $0$-$1$ law.
\end{proof}

A word of notational caution at this point. We use $F\Recurrent \Pi_p$ as 
shorthand for $\P\{F\Recurrent \Pi_p\} = 1$, since $\Pi_p$ is a random set.
However, when we condition on a given configuration from this a.s.\ event, 
 $F\Recurrent \Pi_p$ reverts to its original meaning.


As we have noted already, the [microscopic] fractal percolation set 
$\Pi_{p,\infty}(\cV_k)$ is nonvoid if and only if $p>2^{-d}$.
The large-scale analogue of becoming nonvoid is to become 
unbounded. The following shows that the large-scale result 
takes a different 
form than its small-scale counterpart in the critical case $p=2^{-d}$.

\begin{lemma}\label{lem:perc:unbdd}
	$\Pi_p$ is almost surely unbounded if $p\ge 2^{-d}$ and
	it is almost surely bounded if $p<2^{-d}$.
	\label{lem:20}
\end{lemma}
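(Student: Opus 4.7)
The plan is to reduce the dichotomy to a convergence question via Borel--Cantelli, then to handle the subcritical range $p<2^{-d}$ by a first-moment bound and the range $p\geq 2^{-d}$ by a branching-process lower bound.

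Since $\Pi_p$ is unbounded if and only if the events $B_k:=\{\Pi_p\cap\cS_k\neq\varnothing\}$ occur for infinitely many $k$, and since the $B_k$'s are independent by part (ii) of Lemma~\ref{lm:basic} and satisfy $\P(B_k)=\P\{\Pi_{p,k}(\cV_k)\cap\cS_k\neq\varnothing\}$ by part (iii), the two Borel--Cantelli lemmas reduce everything to deciding the convergence of the series $\sum_{k\geq 0}\P\{\Pi_{p,k}(\cV_k)\cap\cS_k\neq\varnothing\}$ in terms of $p$.

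For $p<2^{-d}$ I would apply Markov's inequality. The random variable $|\Pi_{p,k}(\cV_k)|$ is the size at generation $k+1$ of a Galton--Watson process whose root survives with probability $p$ and whose offspring distribution is $\mathrm{Binomial}(2^d,p)$, so its expectation is a constant multiple of $(2^d p)^{k+1}$. Hence $\P\{\Pi_{p,k}(\cV_k)\cap\cS_k\neq\varnothing\}\leq \E|\Pi_{p,k}(\cV_k)|$ is geometrically summable, which gives almost sure boundedness of $\Pi_p$.

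For $p\geq 2^{-d}$ I would localize the percolation to a well-chosen dyadic subcube of $\cS_k$. The cube $C_k:=[2^{k-1},2^k)^d\in\D_{k-1}$ is disjoint from $\cV_{k-1}$, and its unique dyadic parent $\widehat C_k\in\D_k$ is contained in $\cV_k$. On the event $\{I_p(\cV_k)=I_p(\widehat C_k)=I_p(C_k)=1\}$, which has probability $p^3$ and is independent of all weights attached to cubes strictly inside $C_k$, the count $|\Pi_{p,k}(\cV_k)\cap C_k|$ is distributed as $Z_{k-1}$, where $(Z_n)_{n\geq 0}$ is a Galton--Watson process with $Z_0=1$ and offspring law $\mathrm{Binomial}(2^d,p)$. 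Setting $q_n:=\P\{Z_n>0\}$, this yields $\P(B_k)\geq p^3 q_{k-1}$. When $p>2^{-d}$ the process is supercritical, so $q_n\to q_\infty>0$ and the series diverges trivially. When $p=2^{-d}$ the process is critical with finite offspring variance $1-2^{-d}$, and the classical Kolmogorov--Yaglom asymptotic gives $q_n\sim c/n$, whence $\sum_k\P(B_k)\geq c'\sum_k 1/k=\infty$. The delicate step is this critical case: $\E|\Pi_{p,k}(\cV_k)|$ then remains bounded away from zero but does not grow, so unboundedness of $\Pi_p$ is invisible to first moments alone and requires the sharper survival asymptotics for critical Galton--Watson trees.
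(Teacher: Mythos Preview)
Your argument is correct and follows essentially the same route as the paper: a Borel--Cantelli reduction using the shell-wise independence, a first-moment bound in the subcritical regime, and a lower bound in the (super)critical regime obtained by localizing to a fixed dyadic subcube of $\cS_k$ and invoking Kolmogorov's survival asymptotic $q_n\sim c/n$ for critical Galton--Watson trees. The only cosmetic difference is that the paper uses the monotonicity $\Pi_{p_1}\subseteq\Pi_{p_2}$ to reduce the whole range $p\ge 2^{-d}$ to the critical value, whereas you treat the supercritical case separately; both work, and your constant $p^3$ versus the paper's $p^2$ reflects a harmless off-by-one in how the root of the tree is counted.
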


We will be interested only in $\Pi_p$ when it is unbounded; the
preceding tells us that we want to consider only values of
$p\in[2^{-d}\,,1]$. The said condition on $p$ will appear several
times in the sequel for this very reason.

\begin{proof}
	By the Borel--Cantelli lemma for independent
	events, $\Pi_p$ is unbounded a.s.\ if $\sum_{k=0}^\infty
	\P\{\Pi_p\cap\cS_k\neq\varnothing\}=\infty$; otherwise,
	$\Pi_p$ is bounded a.s. 
	
	Since $\Pi_p\cap\cS_k\subset\Pi_p\cap\cV_k$,
	the probability that $\Pi_p\cap\cS_k$ is nonempty 
	is at most the probability that, in its first $k$ generations,
	a Galton--Watson branching
	process with mean branch rate $2^dp$ survives. We shall denote
	by $Z_k$ the number of descendants in the $k$-th generation.
	When $p<2^{-d}$, the said Galton--Watson process
	is strictly subcritical. It is well-known that 
	$\E(Z_k)=(\E Z_1)^k=(2^dp)^k,$ which forms a summable sequence
	in $k$. The simple bound
	$\P\{Z_k>0\}\le \E(Z_k)$ ensures that
	\begin{equation}
		\sum_{k=1}^\infty\P\{\Pi_{p}\cap\cS_k\neq \varnothing\}
		\le\sum_{k=1}^\infty \P\{\Pi_{p}\cap\cV_k\neq \varnothing\}\le
		\sum_{k=1}^\infty\E(Z_{k+1})<\infty.
	\end{equation}
	Thus, we  conclude that $\Pi_p$ is a.s.\ bounded when $p<2^{-d}$. 
	
	By the monotonicity
	of $p\mapsto\Pi_p$, it remains to prove that $\Pi_{2^{-d}}$ is a.s.\
	unbounded.  One can define fractal percolation
	on any dyadic cube $Q\in\D$ in much the same way as
	we defined it on $\cV_k$. Next we note that if $k\ge 2$, then 
	\begin{equation}
		\P\left\{\Pi_{2^{-d}}\cap\cS_k\neq\varnothing
		\right\}\ge p^2\P(E_k),
	\end{equation}
	where $E_k$ denotes the event that fractal percolation with
	 $p=2^{-d}$ on a dyadic
	cube of side $2^{k-1}$ does not become void in $k$ steps.
	Therefore, $\P\{\Pi_{2^{-d}}\cap\cS_k\neq\varnothing\}$
	is bounded below by the probability that a certain critical Galton--Watson
	does not become extinct in its first $k$ generations. A well-known
	theorem of Kolmogorov \cite{K1938}
	asserts that for critical Galton--Watson branching process, 
	\begin{equation}
		\lim_{k\to\infty}k\P\{Z_k>0\}=\frac{2}{\sigma^2},
	\end{equation}
	where $\sigma^2$ denotes the variance of the offspring distribution;
	see also Kesten et al \cite{KNS1966} and Lyons et al \cite{LPP1995}.  
	This  implies that there exists $k_0>1$ such that
	\begin{equation}
		\sum_{k=0}^{\infty}
		\P\{\Pi_{2^{-d}}\cap{\cal S}_k\neq\varnothing\}
		\ge p^2\sum_{k= k_0}^\infty\P\{Z_k>0\}\ge
		\frac{p^2}{k_0}\sum_{k=k_0}^{\infty} \frac1k=\infty,
	\end{equation}
	and completes the proof.
\end{proof}

The following is the result of an elementary computation.
\begin{lemma}\label{lem:perc:hit}
	If $x\in\cS_k$ for some $k\ge 1$,
	then $\P\{x\in \Pi_p\} = p^{k+1}.$ If $x,y\in\cS_k$
	then $\P\{x,y\in\Pi_p\}= p^{2k+2-{\rm d}(x,y)}$, where
	\begin{equation}\label{d}
		{\rm d}(x\,,y) := (k+1)-\min\left\{ n\ge0:\ \exists Q\in\D_{n}
		\text{ such that }x,y\in Q
		\right\}.
	\end{equation}
\end{lemma}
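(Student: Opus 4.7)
The plan is to express $\{x\in\Pi_p\}$ as a conjunction of independent retention events, one for each dyadic ancestor of $x$, and then count. Since the shells $\{\cS_k\}_{k\ge 0}$ partition $\R^d$, each $x$ lies in a unique $\cS_k$, hence $x\in\Pi_p$ if and only if $x\in\Pi_{p,k}(\cV_k)$. Writing $Q_j^x\in\D_j$ for the unique dyadic cube of side $2^j$ containing $x$, the recursive construction shows that $x\in\Pi_{p,k}(\cV_k)$ iff $I_p(Q_j^x)=1$ for all $j=0,1,\ldots,k$. Since these indicators are independent $\mathrm{Bernoulli}(p)$ random variables (being of the form $\1_{(0,p)}(U(Q))$ for independent uniforms), this gives $\P\{x\in\Pi_p\}=p^{k+1}$.

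For the two-point statement, I would apply the same reduction to $x$ and $y$ simultaneously: $\{x,y\in\Pi_p\}$ becomes the event that $I_p(Q_j^x)=1=I_p(Q_j^y)$ for every $j=0,\ldots,k$. The only subtlety is counting how many of the $2(k+1)$ indicators are genuinely distinct. By construction, $Q_j^x=Q_j^y$ iff there is some cube in $\D_j$ containing both points, and this monotone property of ``sharing a dyadic cube'' kicks in at exactly the level
\[
n^\ast := \min\{n\ge 0 : \exists Q\in\D_n,\ x,y\in Q\},
\]
so that by definition $\mathrm{d}(x,y)=(k+1)-n^\ast$. Thus for $j<n^\ast$ the cubes $Q_j^x,Q_j^y$ are disjoint, contributing $2n^\ast$ distinct indicators, whereas for $n^\ast\le j\le k$ the two chains have coalesced and share a single common ancestor cube, contributing $k+1-n^\ast$ further independent indicators.

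Summing the exponents and invoking independence gives
\[
\P\{x,y\in\Pi_p\} \;=\; p^{\,2n^\ast+(k+1-n^\ast)} \;=\; p^{\,k+1+n^\ast} \;=\; p^{\,2k+2-\mathrm{d}(x,y)},
\]
which is the second assertion (and collapses correctly to the single-point case when $x=y$, for then $n^\ast=0$ and $\mathrm{d}(x,y)=k+1$). The whole argument is essentially bookkeeping; the only points warranting care are the identification of \emph{exactly} $k+1$ levels contributing to a single-point hit (matching the $k+1$ generations of the branching construction inside $\cV_k$) and the verification that the two ancestor chains of $x$ and $y$ merge at one and only one level. There is no genuine obstacle—the sole input beyond careful counting is the independence of the $U(Q)$'s across distinct dyadic cubes.
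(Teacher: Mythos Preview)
Your argument is correct and is precisely the elementary computation the paper alludes to but does not spell out: express survival of a point as the conjunction of the $k+1$ independent Bernoulli retentions along its ancestor chain in $\mathcal{T}_k$, and for two points count shared versus distinct ancestors via the level $n^\ast$ at which their chains merge. One small remark: in the boundary case where the most recent common ancestor of $x$ and $y$ is the root $\cV_k$ itself (so $\mathrm{d}(x,y)=0$), the paper is implicitly treating $\cV_k$ as the dyadic cube at level $k{+}1$; with that convention your count $2n^\ast+(k{+}1-n^\ast)$ still yields $2k+2$, in agreement with the formula.
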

According to  \eqref{d}, $0 \le \d(x\,,y) \le k+1$ 
when $x,y \in \cS_k$. Both bounds can be achieved:
$\d(x\,,y)=0$ when the most recent common ancestor of $x$ and $y$ 
in the branching process is the root $\cV_k$; and $\d(x\,,y)=k+1$ when $y=x$.

We will use the preceding in order to prove the following.

\begin{theorem}\label{th:Dim:Pi}
	For all non random sets $F\subset\R^d$, 
	\begin{equation}
		\Dim (F) = -\log_2 \inf\left\{ p\in[2^{-d},1]:\, F\Recurrent \Pi_p\right\}.
	\end{equation}
\end{theorem}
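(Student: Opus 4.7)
Set $q := \inf\{p \in [2^{-d}, 1] : F \Recurrent \Pi_p\}$, so that the theorem asserts $\Dim(F) = -\log_2 q$. I plan to establish the two matching inequalities, each coming from an estimate on $\P\{F \cap \Pi_p \cap \cS_k \neq \varnothing\}$ combined with a Borel--Cantelli argument. Both halves of Borel--Cantelli are available because of the shell independence in Lemma~\ref{lm:basic}(ii), and Lemma~\ref{lem:0-1} guarantees that $F \Recurrent \Pi_p$ is an almost-sure event in the first place.

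\emph{First-moment direction} ($q \geq 2^{-\Dim(F)}$). Fix $\alpha > \Dim(F)$ and set $p = 2^{-\alpha}$, so that $\sum_k \cN_\alpha(F, \cS_k) < \infty$. For any dyadic cube $Q \in \D_\ell$ with $Q \subseteq \cV_k$ and $0 \leq \ell \leq k$, the event $Q \cap \Pi_p \neq \varnothing$ requires $k + 1 - \ell$ coin flips (corresponding to $Q$ and its ancestors within $\cV_k$ up to level $k$) to all succeed, so by Lemma~\ref{lem:perc:hit},
\begin{equation*}
	\P\{Q \cap \Pi_p \neq \varnothing\} \leq p^{k + 1 - \ell} = 2^{\alpha(\ell - k - 1)}.
\end{equation*}
Taking a near-optimal cover of $F \cap \cS_k$ as in \eqref{eq:new:h} and union-bounding gives $\P\{F \cap \Pi_p \cap \cS_k \neq \varnothing\} \leq \cN_\alpha(F, \cS_k)$; summing over $k$ and applying the first Borel--Cantelli lemma produces $F \not\Recurrent \Pi_p$.

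\emph{Second-moment direction} ($q \leq 2^{-\Dim(F)}$). Fix $\alpha < \alpha_1 < \Dim(F)$ and set $p = 2^{-\alpha}$. Since $\cN_{\cdot}(F, \cS_k)$ is nonincreasing in its subscript, $\sum_k \cN_{\alpha_1}(F, \cS_k) = \infty$. The plan is to produce, for each $k$, a finite measure $\mu_k$ on $F \cap \cS_k \cap \Z^d$ with the mass lower bound $\mu_k(F \cap \cS_k) \geq c\, \cN_{\alpha_1}(F, \cS_k)$ and the packing upper bound $\mu_k(Q) \leq 2^{\alpha_1(\ell - k - 1)}$ for every $Q \in \D_\ell$ with $Q \subseteq \cV_k$, and then apply Paley--Zygmund. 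Writing $S_k := \sum_x \mu_k(x) \1\{x \in \Pi_p\}$ and using Lemma~\ref{lem:perc:hit},
\begin{equation*}
	\P\{S_k > 0\} \geq \frac{(\E S_k)^2}{\E[S_k^2]} = \frac{\mu_k(F \cap \cS_k)^2}{\sum_{x,y} \mu_k(x) \mu_k(y)\, p^{-{\rm d}(x,y)}}.
\end{equation*}
Grouping pairs $(x,y)$ by their smallest common dyadic ancestor $Q \in \D_n$ (so that ${\rm d}(x,y) = k+1-n$) and using the packing bound $\sum_{Q \in \D_n,\, Q \subseteq \cV_k} \mu_k(Q)^2 \leq 2^{\alpha_1(n-k-1)} \mu_k(F \cap \cS_k)$, the denominator is bounded above by
\begin{equation*}
	\mu_k(F \cap \cS_k) \sum_{n=0}^{k+1} 2^{-(\alpha_1 - \alpha)(k+1-n)} \leq C(\alpha_1 - \alpha)\, \mu_k(F \cap \cS_k),
\end{equation*}
a convergent geometric series independent of $k$. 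Hence $\P\{F \cap \Pi_p \cap \cS_k \neq \varnothing\} \gtrsim \cN_{\alpha_1}(F, \cS_k)$; summing in $k$ diverges, and the second Borel--Cantelli lemma (valid by shell independence) yields $F \Recurrent \Pi_p$ almost surely. The choice of the ``cushion'' $\alpha_1 - \alpha > 0$ is precisely what turns the would-be $O(k)$ factor in the energy sum into an $O(1)$ factor.

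The main obstacle is the Frostman-type lemma producing the measure $\mu_k$. It arises from LP duality between the dyadic covering minimization defining (a fractional relaxation of) $\cN_{\alpha_1}(F, \cS_k)$ and the dual mass-distribution maximization over the finite $2^d$-ary tree rooted at $\cV_k$; the dual feasibility constraints are exactly the packing inequalities above, and the dual optimum is the fractional covering number. The remaining step is to verify that the integer and fractional covering numbers are equivalent up to a constant depending only on $d$ and $\alpha_1$, which is standard for dyadic covering problems (greedy rounding loses at most an $O(1)$ factor because the dyadic cubes at one scale form a partition).
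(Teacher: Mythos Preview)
Your proof is correct and follows essentially the same route as the paper: a first-moment/covering bound plus Borel--Cantelli for one inequality, and Paley--Zygmund applied to a Frostman-type measure (with the shell independence of Lemma~\ref{lm:basic} enabling the second Borel--Cantelli) for the other. Two small differences worth noting. First, the paper opens with an explicit reduction to $F\subseteq\Z^d$ via the Barlow--Taylor discretization $F\mapsto F^z$; you use integer points only implicitly when you place $\mu_k$ on $F\cap\cS_k\cap\Z^d$, so stating the reduction would tighten the argument. Second, for the existence of the measure $\mu_k$ with the mass lower bound and the dyadic packing bound, the paper simply invokes Theorem~4.2 of Barlow and Taylor \cite{BT1992} as a black box, whereas you sketch an LP-duality derivation. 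Your duality approach is a legitimate way to prove exactly that lemma (the dyadic cubes form a laminar family, so the covering LP on the tree $\mathcal{T}_k$ has no integrality gap and the dual feasible region is precisely the set of measures satisfying your packing constraints), but since the result is already available in \cite{BT1992} you can cite it and skip the rounding discussion entirely.
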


\begin{remark}\label{rem:Dim:Pi}
	Theorem \ref{th:Dim:Pi} can be recast in the following
	equivalent terms: If $\Dim(F)>-\log_2p,$ then
	$F\Recurrent\Pi_p$; otherwise if $\Dim(F)<-\log_2p$, then
	$F\nRecurrent\Pi_p$. The case that $\Dim(F)=-\log_2p$
	is not decided by a dimension criterion. That case can be decided by
	a more delicate capacity criterion.
	However, we will not have need for this  refinement,
	and so will not pursue it here.
\end{remark}

\begin{proof}
	Define, as in Barlow and Taylor \cite[p.\ 137]{BT1992},
	$F^z=\phi(F)$, where $\phi(x)$ denotes the closest point
	in $\Z^d$ to $x\in\R^d$, with some procedure in place for
	breaking ties. We may observe that $F\cap\Pi_p\cap\cS_k$
	is nonempty if and only if $F^z\cap\Pi_p\cap\cS_k$ is nonempty.
	Lemma 6.1 of Barlow and Taylor \cite{BT1992} ensures that
	$\Dim(F)=\Dim(F^z)$. Therefore, we may
	assume without loss of generality that $F$ is a subset of $\Z^d$;
	otherwise, we can replace $F$ by $F^z$ everywhere 
	throughout the remainder of the
	proof. From now on we consider only $F\subseteq\Z^d$.
	
	Let $k\ge 0$ be an arbitrary integer.	
	We can find dyadic cubes $Q_1,\ldots,Q_m\subset\cS_k$ such that:
	\begin{enumerate}
		\item The sidelength $2^{\ell_i}$ $(0 \le \ell_i \le k+1)$ of every $Q_i$ is at least one;
		\item $(F\cap\cS_k)\subseteq\cup_{i=1}^m Q_i$; and
		\item $\mathcal{N}_{\log_2(1/p)}(F\,,\cS_k) = \sum_{i=1}^m
			p^{k-\ell_i+1}$.
	\end{enumerate}
	Thus, we obtain
	\begin{equation}\begin{split}
		\P\{\Pi_p\cap Q_i\not = \varnothing\}&=
			\P\{\Pi_{p,k}({\cal V}_k)\cap Q_i\not = \varnothing\}\\
		&\le\P\{\Pi_{p,\ell_i-1}({\cal V}_k)\cap Q_i\not = \varnothing\}\\
		&=p^{k +1 -\ell_i}.
	\end{split}\end{equation}
	For the inequality we have used the fact that 
	$\Pi_{p,i}(\cV_k)\supset\Pi_{p,i+1}(\cV_k)$ for every integer $i\ge-1$.
	In any case, it follows that
	\begin{equation}\label{eq:UB:Hauss}
		\P\{ F\cap\Pi_p\cap\cS_k\neq\varnothing\}
		\le \sum_{i=1}^m \P\{\Pi_p\cap Q_i\neq\varnothing\}
		\le\mathcal{N}_{\log_2(1/p)}(F\,,\cS_k).
	\end{equation}
	The preceding holds for all $p\in(0\,,1]$.
 	Suppose for the moment that
	$\log_2(1/p)>\Dim(F)$. Then,
	$\sum_{k=0}^\infty\mathcal{N}_{\log_2(1/p)}
	(F\,,\cS_k)<\infty$ by the definition of $\Dim$; and \eqref{eq:UB:Hauss}
	implies that 
	\begin{equation}
		\sum_{k=0}^\infty\P\{F\cap\Pi_p\cap\cS_k\neq
		\varnothing\}<\infty.
	\end{equation}
	The non recurrence of $F$ for $\Pi_p$ follows by
	the Borel--Cantelli lemma. 
	
	We have proved that
	if $p\in( 0\,, 2^{-\Dim(F)})$, then $F$ is
	not recurrent for $\Pi_p$. 	
	It now remains to show that
	\begin{equation}\label{eq:goal:LB}
		\text{If $\Dim(F)>\delta>0$ and $p\in( 2^{-\delta}\,,1)$, then 
		$F\Recurrent\Pi_p$.}
	\end{equation}
	From now on we choose and fix an arbitrary $\delta\in(0\,,\Dim(F))$.
	Note that 
	\begin{equation}\label{coocoo}
		\sum_{k=0}^\infty\mathcal{N}_\delta(F\,,\cS_k)=\infty,
	\end{equation}
	thanks to the definition of $\Dim$. 
		
	According to Theorem 4.2 of Barlow and Taylor \cite{BT1992},
	we can find a sigma-finite measure $\bar\mu$ on $\R^d$ 
	and a positive and finite constant $c$, that depends only
	on the ambient dimension $d$, such that
	\begin{equation}
		\bar\mu(\cS_k)\ge {\cal N}_{\delta}(\cS_k\,,\cS_k)
		\qquad\text{ and}\qquad
		\bar\mu(Q)\le c 2^{\delta(\ell-k-1)},
	\end{equation}
	for all integers $k\ge 0$
	and all dyadic cubes $Q\subset\cS_k$ with sidelength $2^\ell\ge 1$.
	[Barlow and Taylor \cite{BT1992} define a measure on $\cS_k$, 
	that they refer to as $\mu$. The restriction of our $\bar\mu$ to $\cS_k$
	is their measure $\mu$.] Normalize  
	\begin{equation}
		\mu(\cdot) := \frac{\bar\mu(\cdot)}{\bar\mu(\cS_k)},
	\end{equation}
	in order to conclude the following for all integers $k\ge 0$: 
	\begin{enumerate}
		\item[(i)] $\mu(\cS_k) =1$;
		\item[(ii)] $\mu(Q) \le c 2^{\delta(\ell-k-1)}/\mathcal{N}_\delta
			(F\,,\cS_k)$, uniformly for all dyadic cubes $Q\subset\cS_k$
			with sidelength $2^\ell\ge 1$. [This is valid even when
			$\mathcal{N}_\delta(F\,,\cS_k)=0$, provided that we define
			$1\div 0:=\infty$.]
	\end{enumerate}
	
	Define
	\begin{equation}
		\zeta_k := p^{-k-1}\mu(\cS_k\cap\Pi_p)
		=\sum_{x\in\cS_k} \frac{\1_{\{x\in\Pi_p\}}}{p^{k+1}} \mu(x),
	\end{equation}
	where $\mu(x):=\mu(\{x\})$.
	By Lemma \ref{lem:perc:hit}: (i)
	$\E[\zeta_k] =\mu(\cS_k) =1$;
	and (ii)
	\begin{equation}\begin{split}
		\E[\zeta_k^2] &= \sum_{x,y\in\cS_k} p^{-\d(x,y)}\,\mu(x)\,\mu(y)\\
		&= \sum_{j=0}^{k+1}  p^{-j}\,(\mu\times\mu)\left\{
			(x\,,y)\in\cS_k^2:\ \d(x\,,y)=j\right\}\\
		&\le \sum_{j=0}^{k+1}  p^{-j}\,(\mu\times\mu)\left\{
			(x\,,y)\in\cS_k^2:\ \d(x\,,y)\ge j\right\}.
	\end{split}\end{equation}
	For a given $x$, define $C_j(x) := \{y\in\cS_k:\, \d(x\,,y)\ge j\}$.
	Because $\mu(\cS_k)=1$,
	\begin{equation}
		(\mu\times\mu)\{(x\,,y)\in\cS_k^2:\, \d(x\,,y)\ge j\}
		\le \max_{x\in\cS_k}\mu(C_j(x)).
	\end{equation}
	Since $C_j(x)\subset\D_{k+1-j}$, we can find a
	dyadic cube $Q\in\D_{k+1-j}$ such that $C_j(x)\subset Q$.
	Therefore, property (ii) of the measure 
	$\mu$ ensures that
	\begin{equation}
		\mu(C_j(x))\le \mu(Q) \le \frac{c}{2^{\delta j}\mathcal{N}_\delta(F\,,\cS_k)},
	\end{equation}
	and hence
	\begin{equation}\begin{split}
		\E[\zeta_k^2] &\le \frac{c}{\mathcal{N}_\delta(F\,,\cS_k)}\cdot
			\sum_{j=0}^k  \big(2^{\delta}p\big)^{-j}\le\frac{c\left(1- (2^\delta p)^{-1} \right)^{-1}}{\mathcal{N}_\delta(F\,,\cS_k)}.
	\end{split}\end{equation}
	For the last inequality we have used the hypothesis of (\ref{eq:goal:LB}). 
	The Paley-Zygmund inequality yields
	$\P\{\zeta_k>0\}\ge (\E[\zeta_k])^2/\E[\zeta_k^2]$.
	Therefore, our bounds for
	the two first moments of $\zeta_k$ lead us to
	\begin{equation}
		\P\{\Pi_p\cap F\cap \cS_k\neq\varnothing\} 
		\ge \mathcal{N}_\delta(F\,,\cS_k)\cdot{\frac{\left(1- (2^\delta p)^{-1} \right)}%
		{c}}.
	\end{equation}
	This and \eqref{coocoo} together imply that
	$\sum_{k=0}^\infty\P\{\Pi_p\cap F\cap \cS_k\neq\varnothing\} =\infty$.
	The independence half of the Borel--Cantelli lemma implies \eqref{eq:goal:LB}.
\end{proof}

Let us close this section by presenting a quick application of Theorem 
\ref{th:Dim:Pi}.

\begin{corollary}\label{co:Dim:Pi}
	For each $p\in(0\,,1]$, $\Dim(\Pi_p)=(d+\log_2 p)^+$ a.s.
\end{corollary}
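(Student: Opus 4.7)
The plan is to split on whether $p$ is subcritical or supercritical and then prove matching upper and lower bounds in the supercritical regime $p\ge 2^{-d}$. The subcritical case $p<2^{-d}$ is immediate from Lemma \ref{lem:perc:unbdd}: $\Pi_p$ is a.s.\ bounded, so $\Dim(\Pi_p)=0=(d+\log_2 p)^+$.

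For the upper bound when $p\ge 2^{-d}$ I would use a plain first-moment estimate. Covering $\Pi_p\cap\cS_k$ by unit cubes yields $\mathcal{N}_\gamma(\Pi_p,\cS_k)\le 2^{-\gamma(k+1)}|\Pi_p\cap\cS_k|$, and Lemma \ref{lem:perc:hit} gives $\E|\Pi_p\cap\cS_k|\le |\cS_k|\,p^{k+1}=O(2^{kd}p^k)$, so $\E\mathcal{N}_\gamma(\Pi_p,\cS_k)=O\bigl((2^{d-\gamma}p)^k\bigr)$. Whenever $\gamma>d+\log_2 p$ this is summable in $k$, hence $\sum_k\mathcal{N}_\gamma(\Pi_p,\cS_k)<\infty$ a.s., and the definition of $\Dim$ forces $\Dim(\Pi_p)\le\gamma$ a.s.; letting $\gamma\downarrow d+\log_2 p$ concludes this direction.

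For the lower bound I would run a replica / coupling argument based on Theorem \ref{th:Dim:Pi}. Fix $q\in(2^{-d}/p\,,1]$, well-defined because $p\ge 2^{-d}$, and construct an \emph{independent} macroscopic fractal percolation $\tilde\Pi_q$ with parameter $q$ on the same probability space. Since the two constructions use independent families of Bernoulli indicators with $I_p(Q)\tilde I_q(Q)\sim\text{Bernoulli}(pq)$, one verifies cube-by-cube that $\Pi_p\cap\tilde\Pi_q$ has the same distribution as $\Pi_{pq}$. Because $pq>2^{-d}$, Lemma \ref{lem:perc:unbdd} makes this intersection a.s.\ unbounded, so $\Pi_p\Recurrent\tilde\Pi_q$ a.s. Conditioning on $\Pi_p$ via Fubini then gives
\[
	1=\P\{\Pi_p\Recurrent\tilde\Pi_q\}
	=\E\bigl[\P\{F\Recurrent\tilde\Pi_q\}\big|_{F=\Pi_p}\bigr],
\]
and by Theorem \ref{th:Dim:Pi} the inner probability equals $1$ only if $\Dim(F)\ge -\log_2 q$. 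Hence $\Dim(\Pi_p)\ge -\log_2 q$ a.s., and letting $q\downarrow 2^{-d}/p$ along a countable sequence yields $\Dim(\Pi_p)\ge d+\log_2 p$ a.s.

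The one non-routine step is the distributional identity $\Pi_p\cap\tilde\Pi_q\stackrel{d}{=}\Pi_{pq}$, which reduces to matching two coupled branching-process constructions at the level of Bernoullis and is essentially a bookkeeping exercise. The undecided boundary case $\Dim(F)=-\log_2 q$ flagged in Remark \ref{rem:Dim:Pi} is harmless because we approach the critical ratio $2^{-d}/p$ through a countable dense sequence.
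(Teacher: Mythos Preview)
Your proof is correct. The lower bound via the replica/coupling argument is essentially identical to the paper's proof: both couple $\Pi_p$ with an independent $\Pi_q'$, use that $\Pi_p\cap\Pi_q'\stackrel{d}{=}\Pi_{pq}$, invoke Lemma~\ref{lem:perc:unbdd}, and read off the dimension via Theorem~\ref{th:Dim:Pi} after conditioning on $\Pi_p$.

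The difference is in the upper bound. You prove it by a direct first-moment covering estimate, bounding $\E\,\mathcal{N}_\gamma(\Pi_p,\cS_k)$ and summing. The paper instead extracts the upper bound from the \emph{same} replica argument: when $pq<2^{-d}$, Lemma~\ref{lem:perc:unbdd} makes $\Pi_{pq}$ a.s.\ bounded, so $\P\{\Pi_p\Recurrent\Pi_q'\}=0$; combined with the supercritical case this pins down the critical $q_c=p^{-1}2^{-d}$ exactly, and a single application of Theorem~\ref{th:Dim:Pi} (conditional on $\Pi_p$) gives both inequalities at once. The paper's route is shorter and more symmetric; your route for the upper bound is more elementary, since it uses only the definition of $\Dim$ and Lemma~\ref{lem:perc:hit} rather than the full strength of Theorem~\ref{th:Dim:Pi}.
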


This result  has content only when $p\in[2^{-d}\,,1]$; see Lemma \ref{lem:20}.
In particular, it states that the dimension of fractal percolation is $0$
at criticality.

\begin{proof}
	 The proof uses the replica argument of Peres \cite{Peres1996} without need
	for  essential changes. More specifically,
	let $\Pi_q'$ denote a fractal percolation set with parameter $q\in(0\,,1]$
	such that $\Pi_p$ and $\Pi_q'$ are independent. Because $\Pi_p\cap\Pi_q'$
	has the same distribution as $\Pi_{pq}$,
	it follows that $\P\{ \Pi_p\Recurrent\Pi_q' \}=\P\{\Pi_{pq}
	\text{ is unbounded}\}. $
	Thus, by Lemma \ref{lem:perc:unbdd}, we have
	\begin{equation}
		\P\{\Pi_p\Recurrent\Pi_q'\}=\begin{cases}
			1&\text{if $p\ge q^{-1}2^{-d}$},\\
			0&\text{if $p< q^{-1}2^{-d}$}.
		\end{cases}
	\end{equation}
	We may first condition on $\Pi_p$ and then appeal to Theorem \ref{th:Dim:Pi}
	in order to see that  $\Dim(\Pi_p)= -\log_2(q_c)$ where $q_c$ is
	the critical constant $q\in(0\,,1]$ such that $pq2^d\ge 1$. Because 
	$q_c =p^{-1}2^{-d}$ the corollary follows.
\end{proof}

We can now deduce Barlow and Taylor's dimension theorem
[Proposition \ref{pr:BT}] from the previous results of this paper.
The following is a standard codimension argument; see Taylor 
\cite[Theorem 4]{Tay1966}.

\begin{proof}[Proof of Proposition \ref{pr:BT}]
	Barlow and Taylor \cite[Cor.\ 8.4]{BT1992} 
	have observed that, under
	the conditions of Proposition \ref{pr:BT},
	$P^a\{\mathcal{R}_X\Recurrent F\}=1$ if $\Dim(F)>d-\alpha$
	and $P^a\{\mathcal{R}_X\nRecurrent F\}=1$ if
	$\Dim(F)<d-\alpha$. This is an immediate consequence of 
	Proposition 8.2 of \cite{BT1992}, which is a variation of Lamperti's
	test \cite{Lam1963}; the general form of this form of Lamperti's test
	is in fact Corollary \ref{cor:Lamperti}, which we will prove in due time. 
	
	We apply the preceding observation conditionally, with
	$F:=\Pi_p$, where {$p\in(2^{-d}\,,1]$} is a fixed parameter. 
	By Corollary \ref{co:Dim:Pi}, $\Dim(\Pi_p)=d+\log_2p$ a.s..
	Therefore, $P^a\{\mathcal{R}_X\Recurrent\Pi_p\}=1$ if $p>2^{-\alpha}$
	 and $P^a\{\mathcal{R}_X\nRecurrent\Pi_p\}=1$ if $p<2^{-\alpha}$.
	
	By the Hewitt--Savage 0-1 law, $\Dim(\mathcal{R}_X)$
	is $P^a$-a.s.\ a constant. Choose  $p > 2^{-\alpha}$ and assume 
	to the contrary that $P^a\{\Dim(\mathcal{R}_X) < -\log_2p\} = 1. $
	Restrict the probability space 
	of the random walk to the full-$P^a$ 
	probability event $\{\Dim(\mathcal{R}_X) < -\log_2p\}$ and fix a 
	realization  $F = \mathcal{R}_X$.
	By Theorem \ref{th:Dim:Pi} we conclude that $\P\{ F \nRecurrent \Pi_p\} = 1$ 
	for almost all 
	realizations of the random walk. This gives the desired contradiction since for 
	$\P$-a.e.\ realization of $\Pi_p$, $P^a\{\mathcal{R}_X\Recurrent\Pi_p\}=1$, while
		\begin{align*}
			1 &= \int \d\P \int \d P^a\ \1\{ \mathcal{R}_X\Recurrent\Pi_p \}
		   	= \int \d P^a \int \d\P\, \1\{ \mathcal{R}_X\Recurrent\Pi_p \}= 0.
		\end{align*}	
	It follows that $\Dim(\mathcal{R}_X) \ge -\log_2p$ a.s.\ as long as $p>2^{-\alpha}$.
	An analogous argument shows that
	$\Dim(\mathcal{R}_X)\le -\log_2p$ whenever $p<2^{-\alpha}$.
	Therefore, we conclude that $\Dim(\mathcal{R}_X)=-\log_2(p_c)$
	$P^a$-a.s.\
	where $p_c = 2^{-\alpha}$. 
\end{proof}

\section{ A forest representation of $\Z^d$}\label{sec:forest}
If $x\in\Z^d\cap\cV_k$ for some integer $k\ge 0$,
then there exists a unique sequence $Q_0(x),Q_1(x),\ldots,Q_{k+1}(x)$
of dyadic sets such that:
\begin{enumerate}
	\item $Q_0(x)=\cV_k$ and $Q_{k+1}(x)=[x_1\,,x_1+1)\times\cdots\times
		[x_d\,,x_d+1)$;
	\item $Q_{i+1}(x)\subset Q_i(x)$ for all $i=0,\ldots,k$; and
	\item $Q_i(x)\in\D_{k-i+1}$ for all $i=0,\ldots,k+1$.
\end{enumerate}
Conversely, if  $Q_0,Q_1,\ldots,Q_{k+1}$ denotes a collection of
dyadic cubes such that:
\begin{enumerate}
	\item $Q_0=\cV_k$;
	\item $Q_{i+1}\subset Q_i$ for all $i=0,\ldots,k$; and
	\item $Q_i\in\D_{k-i+1}$ for all $i=0,\ldots,k+1$;
\end{enumerate}
then there exists a unique point $x\in\Z^d\cap\cV_k$
defined unambiguously via $Q_{k+1}= [x_1\,,x_1+1)\times\cdots\times[
x_d\,,x_d+1)$ [equivalently, $x_i:=\inf_{y\in Q_{k+1}}y_i$
for $1\le i\le d$] (see Figure \ref{fig:treerep}). 
Moreover, $Q_i = Q_i(x)$ for all $0\le i\le k+1$.

The preceding describes a bijection between the points
in $\Z^d\cap\cS_k$ and a certain collection of $(k+2)$-chains of dyadic cubes.
We can now use this bijection in order to
build a directed-tree representation of $\Z^d\cap\cS_k$: 
The vertices of the tree are
comprised of all dyadic cubes $Q\in\D$ whose
sidelength is $\ge 1$. For the [directed] edges of our tree, we draw
an arrow from a vertex $Q$ to a vertex $Q'$ if and only if
there exists an integer $i=0,\ldots,k$ and a point $x\in\Z^d$
such that $Q=Q_i(x)$ and $Q'=Q_{i+1}(x)$. The resulting
graph is denoted by $\mathcal{T}_k$. 

It is easy to observe the following
properties of $\mathcal{T}_k$:
\begin{enumerate}
	\item $\mathcal{T}_k$ is a finite rooted tree, the  root of $\mathcal{T}_k$ 
		being $\cV_k$;
	\item Every ray in $\mathcal{T}_k$ has depth $k+1$;
	\item There is a canonical bijection from the
		rays of $\mathcal{T}_k$ to $\Z^d\cap\cS_k$.
\end{enumerate}

\begin{figure}[h]
		\begin{center}
			\begin{tikzpicture}[>=latex, yscale=0.8]
				\definecolor{sussexg}{rgb}{0,0.8,0.7}
				\definecolor{sussexp}{rgb}{0.6,0,0.6}

				\draw[fill=sussexg!30] (2,4) -- (6,3) -- (10,4)--(6, 5)--(2, 4);

					\draw[color = white, line width=2pt] (3,3.75)--(7,4.75);
					\draw[color = white, line width=2pt] (5,3.25)--(9,4.25);
					
					\draw[color = white, line width=2pt] (3,4.25)--(7,3.25);
					\draw[color = white, line width=2pt] (5,4.75)--(9,3.75);	
					
					\draw[color = white, line width=2pt] (4,3.5)--(8,4.5);
					\draw[color = white, line width=2pt] (4,4.5)--(8,3.5);	
					
					\draw[->, line width=0.1pt] (2,3)--(10,5)node[below left]{$y$};
					\draw[->, line width=0.1pt] (2,5)--(10,3)node[below left]{$x$};

					\draw[color=sussexp, densely dashed, line width=1.2pt](4,7)--(3,4); 
					\draw[color=sussexp, densely dashed,line width=1.2pt](4,7)--(3.75, 3.75); 
					\draw[color=sussexp, densely dashed,line width=1.2pt](4,7)--(4.75,4); 
					\draw[color=sussexp, densely dashed,line width=1.2pt](4,7)--(4, 4.25); 
					
					\draw[color=sussexp, densely dashed,line width=1.2pt](5.7,6.5)--(4.75,3.5); 
					\draw[color=sussexp, densely dashed,line width=1.2pt](5.7,6.5)--(5.75, 3.25); 
					\draw[color=sussexp,densely dashed, line width=1.2pt](5.7,6.5)--(6, 3.75); 
					\draw[color=sussexp, densely dashed,line width=1.2pt](5.7,6.5)--(6.75, 3.5);


					\draw[color=sussexp, line width=2pt](8,7)--(7,4); 
					\draw[color=sussexp, densely dashed, line width=1.2pt](8,7)--(7.75,3.75); 
					\draw[color=sussexp, densely dashed,line width=1.2pt](8,7)--(8,4.25); 
					\draw[color=sussexp, densely dashed, line width=1.2pt](8,7)--(9,4);

				\draw[fill=sussexg!30](2,7) -- (6,6) -- (10,7)--(6, 8)--(2, 7);

					
					\draw[color = white, line width=2pt] (4,6.5)--(8,7.5);
					\draw[color = white, line width=2pt] (4,7.5)--(8,6.5);

				
					\draw[color=sussexp, densely dashed,line width=1.2pt](6,10)--(4,7); 
					\draw[color=sussexp,densely dashed, line width=1.2pt](6,10)--(5.7,6.5); 
				        \draw[color=sussexp,densely dashed, line width=1.2pt](6,10)--(6,7.5); 
					\draw[color=sussexp, line width=2pt](6,10)--(8,7); 
				
				\draw[fill=sussexg!30] (2,10) -- (6,9) -- (10,10)node[right]{\large{$\cV_2=Q_0$}}--(6, 11)--(2, 10);

			\end{tikzpicture}
		\end{center}
	\caption{The tree representation of $\cV_2$ with dyadic cubes as nodes. 
	The levels also indicate steps in the percolation branching process.
	The axes are in the picture in order to help give an orientation. 
	Every $1\times1$ square at the lower level is indexed by its south-west corner. 
	The sequence of cubes in the thickset 
	branch of the tree is in descending order $Q_0(0) \supset Q_1(0) \supset Q_2(0)$.}
	\label{fig:treerep}
\end{figure}
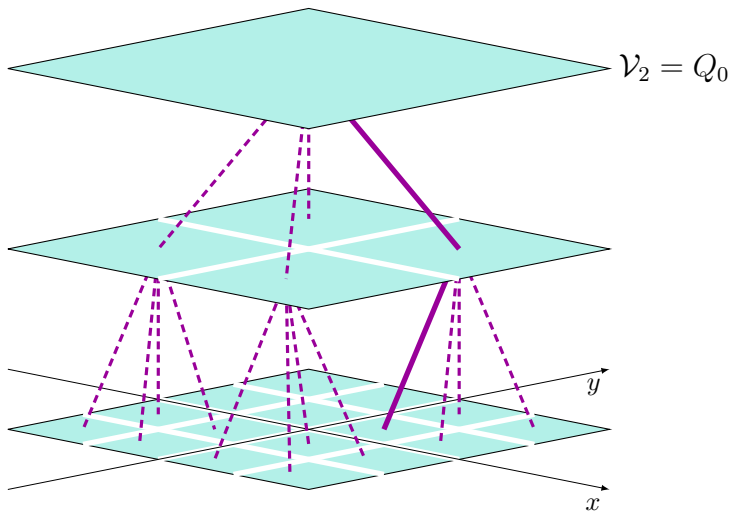

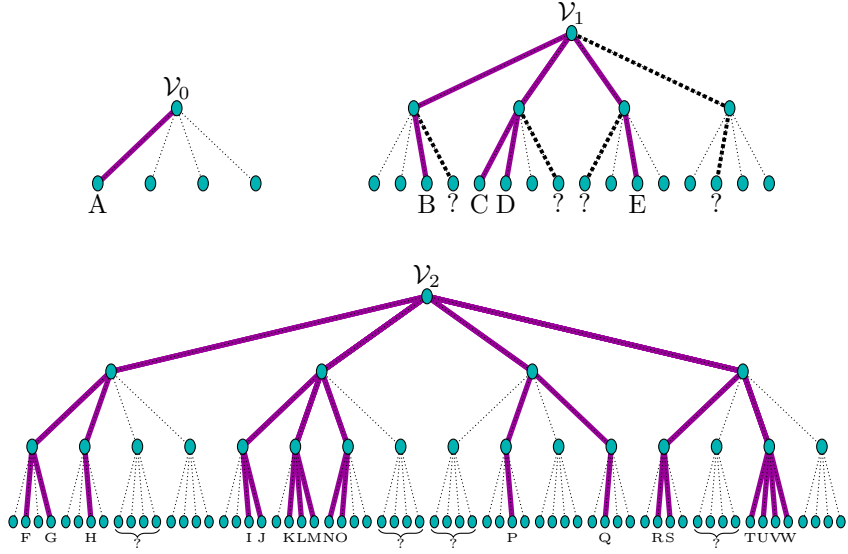
\begin{figure}[h!]
		\begin{center}
			\begin{tikzpicture}[>=latex,xscale=0.7]
				\definecolor{sussexg}{rgb}{0,0.7,0.7}
				\definecolor{sussexp}{rgb}{0.6,0,0.6} 
				
				\draw[color=sussexp, line width=2pt](-0.5,1)--(-2,0)node[below,color=black]{A};
				\foreach \x in {-2,...,1}
					{
					\draw[densely dotted] (-0.5,1)--(\x,0);
					\draw[fill=sussexg] (\x,0) circle(1mm);
					
					}
					
					\draw[fill=sussexg] (-.5,1) circle(1mm)node[above]{$\cV_0$};
				
				\draw[densely dotted, line width=1.5pt](4,1)--(4.75,0)node[below]{?};
				\draw[color=sussexp, line width=2pt](7,2)--(4,1)--(4.25,0)node[below, color=black]{B}; 
				
				\draw[densely dotted,line width=1.5pt](7,2)--(6,1)--(6.75,0)node[below,color=black]{?};
				\draw[color=sussexp, line width=2pt](7,2)--(6,1)--(5.25,0)node[below,color=black]{C}; 
				\draw[color=sussexp, line width=2pt](7,2)--(6,1)--(5.75,0)node[below,color=black]{D};
				 
				\draw[densely dotted,line width=1.5pt](7,2)--(8,1)--(7.25,0)node[below,color=black]{?}; 
				\draw[color=sussexp, line width=2pt](7,2)--(8,1)--(8.25,0)node[below,color=black]{E};

				\draw[densely dotted,line width=1.5pt](7,2)--(10,1)--(9.75,0)node[below,color=black]{?};

					\draw[densely dotted](4,1) -- (3.25,0); 	
					\draw[densely dotted](4,1) -- (3.75,0); 
					\draw[densely dotted](4,1)--(4.25,0); 
					\draw[densely dotted](4,1) -- (4.75,0); 	
					
					\draw[fill=sussexg](3.25,0)circle(1mm); 	
					\draw[fill=sussexg](3.75,0)circle(1mm); 
					\draw[fill=sussexg](4.25,0)circle(1mm); 
					\draw[fill=sussexg](4.75,0)circle(1mm); 	
					
					\draw[densely dotted](6,1) -- (5.25,0); 	
					\draw[densely dotted](6,1) -- (5.75,0); 
					\draw[densely dotted](6,1)--(6.25,0); 
					\draw[densely dotted](6,1) -- (6.75,0); 	
					
					\draw[fill=sussexg](5.25,0)circle(1mm); 	
					\draw[fill=sussexg](5.75,0)circle(1mm); 
					\draw[fill=sussexg](6.25,0)circle(1mm); 
					\draw[fill=sussexg](6.75,0)circle(1mm); 
					
					\draw[densely dotted](8,1) -- (7.25,0); 	
					\draw[densely dotted](8,1) -- (7.75,0); 
					\draw[densely dotted](8,1)--(8.25,0); 
					\draw[densely dotted](8,1) -- (8.75,0); 	
					
					\draw[fill=sussexg](7.25,0)circle(1mm); 	
					\draw[fill=sussexg](7.75,0)circle(1mm); 
					\draw[fill=sussexg](8.25,0)circle(1mm); 
					\draw[fill=sussexg](8.75,0)circle(1mm); 
					
					\draw[densely dotted](10,1) -- (9.25,0); 	
					\draw[densely dotted](10,1) -- (9.75,0); 
					\draw[densely dotted](10,1)--(10.25,0); 
					\draw[densely dotted](10,1) -- (10.75,0); 	
					
					\draw[fill=sussexg](9.25,0)circle(1mm); 	
					\draw[fill=sussexg](9.75,0)circle(1mm); 
					\draw[fill=sussexg](10.25,0)circle(1mm); 
					\draw[fill=sussexg](10.75,0)circle(1mm);

				\foreach \x in {2,...,5}
					{
					\draw[densely dotted](7,2)--(2*\x,1);
					\draw[fill=sussexg] (2*\x,1) circle(1mm);
					 }

				\draw[fill=sussexg] (7,2) circle(1mm) node[above]{$\cV_1$};

			\end{tikzpicture}
			\hspace{1.5in}
			
			\begin{tikzpicture}[xscale=0.7]
			\definecolor{sussexg}{rgb}{0,0.7,0.7}
			\definecolor{sussexp}{rgb}{0.6,0,0.6} 
				
				\draw[color=sussexp, line width=2pt](10,3)--(4,2)--(2.5,1)--(2.5+-1*0.12,0)node[below,color=black]{\tiny F}; 
				\draw[color=sussexp, line width=2pt](10,3)--(4,2)--(2.5,1)--(2.5+3*0.12,0)node[below,color=black]{\tiny G};
				\draw[color=sussexp, line width=2pt](10,3)--(4,2)--(3.5,1)--(3.5+1*0.12,0)node[below,color=black]{\tiny H}; 
				
				\draw[color=sussexp, line width=2pt](10,3)--(8,2)--(6.5,1)--(6.5+1*0.12,0)node[below,color=black]{\tiny I}; 
				\draw[color=sussexp, line width=2pt](10,3)--(8,2)--(6.5,1)--(6.5+3*0.12,0)node[below,color=black]{\tiny J};
				\draw[color=sussexp, line width=2pt](10,3)--(8,2)--(7.5,1)--(7.5-1*0.12,0)node[below,color=black]{\tiny K}; 
				\draw[color=sussexp, line width=2pt](10,3)--(8,2)--(7.5,1)--(7.5+1*0.12,0)node[below,color=black]{\tiny L}; 
				\draw[color=sussexp, line width=2pt](10,3)--(8,2)--(7.5,1)--(7.5+3*0.12,0)node[below,color=black]{\tiny M}; 
				\draw[color=sussexp, line width=2pt](10,3)--(8,2)--(8.5,1)--(8.5-3*0.12,0)node[below,color=black]{\tiny N}; 
				\draw[color=sussexp, line width=2pt](10,3)--(8,2)--(8.5,1)--(8.5-1*0.12,0)node[below,color=black]{\tiny O}; 
				
				\draw[color=sussexp, line width=2pt](10,3)--(12,2)--(11.5,1)--(11.5+1*0.12,0)node[below,color=black]{\tiny P}; 
				\draw[color=sussexp, line width=2pt](10,3)--(12,2)--(13.5,1)--(13.5-1*0.12,0)node[below,color=black]{\tiny Q}; 
				
				\draw[color=sussexp, line width=2pt](10,3)--(16,2)--(14.5,1)--(14.5-1*0.12,0)node[below,color=black]{\tiny R}; 
				\draw[color=sussexp, line width=2pt](10,3)--(16,2)--(14.5,1)--(14.5+1*0.12,0)node[below,color=black]{\tiny S}; 
				\draw[color=sussexp, line width=2pt](10,3)--(16,2)--(16.5,1)--(16.5-3*0.12,0)node[below,color=black]{\tiny T}; 
				\draw[color=sussexp, line width=2pt](10,3)--(16,2)--(16.5,1)--(16.5-1*0.12,0)node[below,color=black]{\tiny U}; 
				\draw[color=sussexp, line width=2pt](10,3)--(16,2)--(16.5,1)--(16.5+1*0.12,0)node[below,color=black]{\tiny V}; 
				\draw[color=sussexp, line width=2pt](10,3)--(16,2)--(16.5,1)--(16.5+3*0.12,0)node[below,color=black]{\tiny W}; 
				\draw [decorate,decoration={brace, mirror, amplitude = 3pt}]
						(4.5 -3*0.14,-0.1) -- (4.5 +3*0.14,-0.1);
				\draw(4.5,-0.1)node[below]{\tiny ?};	
				
				\draw [decorate,decoration={brace, mirror, amplitude = 3pt}]
						(9.5 -3*0.14,-0.1) -- (9.5 +3*0.14,-0.1);
				\draw(9.5,-0.1)node[below]{\tiny ?};		

				\draw [decorate,decoration={brace, mirror, amplitude = 3pt}]
						(10.5 -3*0.14,-0.1) -- (10.5 +3*0.14,-0.1);
				\draw(10.5,-0.1)node[below]{\tiny ?};	
				
				\draw [decorate,decoration={brace, mirror, amplitude = 3pt}]
						(15.5 -3*0.14,-0.1) -- (15.5 +3*0.14,-0.1);
				\draw(15.5,-0.1)node[below]{\tiny ?};	
								
				\foreach \x in {4,8,12,16}
					{
					\draw[densely dotted] (10,3)--(\x,2);

						\foreach \y in {-3,-1,1,3}{
							\draw[densely dotted] (\x,2) -- (\x + .5*\y, 1);
								
								\foreach \z in  {-3,-1,1,3}{ 
									\draw[densely dotted](\x+.5*\y,1)--(\x+.5*\y+.12*\z,0);
									\draw[fill=sussexg] (\x+.5*\y+.12*\z,0) circle(0.8mm);
								}
							\draw[fill=sussexg] (\x+.5*\y,1) circle(1mm);
							}
						\draw[fill=sussexg] (\x,2) circle(1mm); 	
					 }

				\draw[fill=sussexg](10,3)circle(1mm)node[above]{$\cV_2$};
			\end{tikzpicture}
		\end{center}
	\label{fig:forest}	
	\caption{A forest that corresponds to the percolation cluster 
	of Figure \ref{fig:cubeperc}. The trees correspond to the branching processes 
	in each $\cV_k$ as in Figure \ref{fig:treerep}. 
	The thickset purple lines correspond to the surviving population (i.e., the colored squares) 
	in each shell $\cS_k$. The question marks signify that we do not have 
	information about 
	that branch of the process if we are allowed to look only at the percolation cluster;
	they correspond to lattice squares outside the shell $\cS_k$.}
\end{figure}

Because the directed tree $\mathcal{T}_k$ is finite for every $k\ge 0$, we can isometrically embed
$\mathcal{T}_k$ in $\R^2$ such that the vertices of $\mathcal{T}_k$
that have maximal depth lie on the real axis. Of course,
there are infinitely-many such possible
isometric embeddings; we will choose and fix one [it will not matter which
one]. In this way, we can think of every $\mathcal{T}_k$ as a finite rooted
tree, drawn in $\R^2$, whose vertices of maximal depth lie on the real axis
and whose root lies $k+1$ units above the real axis.
Because every $x\in\Z^d\cap\cS_k$ has been coded by the rays of
$\mathcal{T}_k$, and since those rays can in turn be coded by their last vertex [these are
vertices of maximal depth], thus we obtain a bijection $\pi_k$ that maps
each point $x\in\Z^d\cap\cS_k$
to a certain point $\pi_k(x)$ on the real axis of $\R^2$. Note that,
in this way, $\{\pi_k(x)\}_{x\in\Z^d\cap\cS_k}$ can be identified with a finite
collection of points on the real line.

The collection $\{\mathcal{T}_k\}_{k=0}^\infty$ is a 
forest representation of $\Z^d$. We  use this representation in 
order to impose an order relation $\prec$ on $\Z^d$ as follows:
\begin{enumerate}
	\item If $x\in\Z^d\cap\cS_k$ and $y\in\Z^d\cap\cS_\ell$ 
		for two integers $k,l\ge 0$ such that
		$k< \ell$, then  we declare $x\prec y$;
	\item If $x,y\in\Z^d\cap\cS_k$ for the same integer $k\ge0$, and $\pi_k(x)\le \pi_k(y)$, 
		then we declare $x\prec y$.
\end{enumerate}

It can be checked, using only first principles, 
that $\prec$ is in fact a bona fide total order on $\Z^d$.
We might sometimes  also write $y\succ x$ in place of $x\prec y$.

If we identify $x,y\in\Z^d\cap\cS_k$ with 2 rays of the
tree $\mathcal{T}_k$, viewed as a tree drawn in $\R^d$ as was described earlier,
then $x\prec y$ iff the ray for $x$ lies on, or to the left of, the ray for $y$.
And if $x\in\cS_k$ and $y\in\cS_\ell$ for $k<\ell$, then our definition
of $x\prec y$ stems from 
the fact that we would like to draw the tree $\mathcal{T}_k$ to the left of 
the tree $\mathcal{T}_\ell$,
as we embed the forest $\mathcal{T}_0,\mathcal{T}_1,\ldots$, tree by tree,
isometrically in $\R^2$.

\section{ Martin capacity of fractal percolation}\label{sec:mc}
Now we return to Markov chains.
Throughout this section, let $X:=\{X_n\}_{n=0}^\infty$ denote a transient 
Markov chain on $\Z^d$. 
This chain is constructed in the usual way:
We have a probability space $(A\,,\mathcal{A}\,,P)$
together with a family  $\{P^a\}_{a\in\Z^d}$ of probability measures 
such that under $P^a$,
the Markov chain begins at $X_0=a$ for every $a\in\Z^d$. 
By $E^a$ we mean the corresponding expectation operator for $P^a$
for all $a\in\Z^d$ [$E^a(f):=\int f\,\d P^a$].

We assume that the Markov chain is independent of the fractal 
percolations. The two processes are jointly constructed on
$(A\times\Omega\,,\mathcal{A}\times\mathcal{F},P\times\P)$. 
We write $\mathbb{P}^a:=P^a\times\P$ and $\mathbb{E}^a$ the corresponding expectation
operator [$\mathbb{E}^a(f):=\int f\,\d\mathbb{P}^a$].

As is well known, 
the transience of $X$ is equivalent to seemingly-simpler condition that
$g(x\,,x)<\infty$ for all $x\in\Z^d$. This is because 
$g(a\,,x)\le g(x\,,x)$ for all $x\in\Z^d$; in fact, one can apply the strong Markov property
to the first hitting time of $x\in\Z^d$ in order to see that
\begin{equation}\label{HitPoints}
	g(a\,,x) = g(x\,,x)\cdot P^a\{ X_n=x\text{ for some $n\ge 0$}\}
	\qquad\text{for every $x\in\Z^d$}.
\end{equation}

We define an equivalence relation on $\Z^d$ as follows:
For all $x,y\in\Z^d$ we write ``$x\leftrightarrow  y$'' when there
exists an integer $k\ge 0$ such that $x$ and $y$ are both in
the same shell $\cS_k$. Symbol $x\not\leftrightarrow  y$ denotes that $x$ and $y$ are in different shells.

If $\mu$ is a probability measure on $\Z^d$, then we
define two ``energy forms'' for $\mu$.  The first form is defined,
for every fixed $a\in\Z^d$, as 
\begin{equation}\begin{split}
	I (\mu\,;a) &:= \mathop{\sum\sum}_{\substack{
	x,y\in\Z^d:\\ x\not\leftrightarrow  y}}\frac{g(x\,,y)}{g(a\,,y)} \mu(x)\,\mu(y),
\end{split}\end{equation}
where $\mu(z) := \mu(\{z\})$ as before. 
Our second definition of energy requires an additional
parameter $p\in(0\,,1]$, and is defined as follows:
\begin{equation}
	I_p(\mu\,;a) := \mathop{\sum\sum}\limits_{\substack{x,y\in\Z^d:\\
	x\leftrightarrow  y}} 
	\frac{g(x\,,y)}{g(a\,,y)} p^{-{\rm d}(x,y)}\mu(x)\mu(y).
\end{equation}
Note, in particular, that 
\begin{equation}
	I(\mu\,;a) + I_1(\mu\,;a) = \mathop{\sum\sum}_{x,y\in\Z^d}
	\frac{g(x\,,y)}{g(a\,,y)}\,\mu(x)\,\mu(y)
\end{equation}
coincides with the \emph{Martin energy of $\mu$} \cite{BPP1995}. 

Finally we define a quantity that can be thought of as 
a kind of graded Martin capacity
associated to $X$: For any set $F$, $p \le 1$ and $a \in \Z^d$, define 
the Martin $p$-capacity by
\begin{equation}\label{c:k:p}
		c_p(F;a) := \sup_{\substack{F_0\subseteq F:\\
		F_0\text{ \rm finite}}}\Big[\inf_{\mu\in M_1(F_0)}\big\{
		I(\mu\,;a) + I_p(\mu\,;a)\big\} \Big]^{-1}.
	\end{equation}
The set function $c_1$ is the same Martin capacity
that appeared earlier in \eqref{c1}.
It might help to observe that the Martin $p$-capacity satisfies the following monotonicity property:  
\begin{equation}\label{eq:cp:mono}
	\textrm{If $F \subseteq G$ \quad then \quad }c_p(F;a) \le c_p(G;a). 
\end{equation}

The main result of this section can
be stated as follows.

\begin{theorem}\label{th:RW:FP}
	If $F\subseteq\Z^d$ is nonrandom, then for all $a\in\Z^d$,
	\begin{equation}
		\tfrac12 c_p(F;a)\le
		\mathbb{P}^a\{X_n\in\Pi_p\cap F\text{ \rm for some $n\ge 0$}\}
		\le 128 c_p(F;a),
	\end{equation}
	where $c_p$ is defined by \eqref{c:k:p}.
\end{theorem}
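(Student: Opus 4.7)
The plan is to prove matching lower and upper bounds on the hitting probability in terms of $c_p(F;a)$. Both bounds reduce, via transience and the monotone convergence $\mathbb{P}^a\{\mathcal{R}_X \cap \Pi_p \cap F \ne \varnothing\} = \lim_{F_0 \uparrow F} \mathbb{P}^a\{\mathcal{R}_X \cap \Pi_p \cap F_0 \ne \varnothing\}$ over finite $F_0 \subseteq F$, to a two-sided energy estimate for probability measures $\mu \in M_1(F_0)$ competing in the infimum defining $c_p$.

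For the lower bound I would use a Paley--Zygmund second-moment argument. Fix finite $F_0 \subseteq F$ and $\mu \in M_1(F_0)$, and define the weighted occupation
\begin{equation*}
Z := \sum_{n=0}^\infty \frac{\mu(X_n)\, \1_{\{X_n \in \Pi_p\}}}{g(a, X_n)\, p^{k(X_n)+1}},
\end{equation*}
where $k(x)$ denotes the shell index with $x \in \cS_{k(x)}$. Lemma \ref{lem:perc:hit} gives $\P\{x \in \Pi_p\} = p^{k(x)+1}$, so Fubini yields $\mathbb{E}^a[Z] = 1$. For the second moment, split the double sum over $x,y$ according to $x \leftrightarrow y$ or $x \not\leftrightarrow y$; Lemma \ref{lem:perc:hit} together with independence of $\Pi_p$ across shells yields $\P\{x,y \in \Pi_p\}$ in closed form in each case. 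The identity
\begin{equation*}
\sum_{m,n \ge 0} P^a\{X_m = x,\, X_n = y\} = g(a,x)\,g(x,y) + g(a,y)\,g(y,x),
\end{equation*}
obtained by splitting $m \le n$ versus $m > n$ and applying the Markov property, then produces $\mathbb{E}^a[Z^2] \le 2\bigl(I(\mu;a) + I_p(\mu;a)\bigr)$. Paley--Zygmund yields $\mathbb{P}^a\{Z > 0\} \ge \bigl(2(I(\mu;a) + I_p(\mu;a))\bigr)^{-1}$, and optimizing over $\mu$ and $F_0$ produces $\mathbb{P}^a\{\text{hit}\} \ge \tfrac12 c_p(F;a)$.

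For the upper bound, let $T := \inf\{n \ge 0 : X_n \in \Pi_p \cap F_0\}$ and $q := \mathbb{P}^a\{T < \infty\}$, and take the first-hit measure $\mu(x) := q^{-1}\, \mathbb{P}^a\{X_T = x\}$ as the test measure in the definition of $c_p$. The goal is to bound the energy $I(\mu;a) + I_p(\mu;a) \le 128/q$, forcing $c_p(F_0;a) \ge q/128$ and hence the claimed upper bound after $F_0 \uparrow F$. The energy estimate invokes the forest representation of $\Z^d$ and the total $\prec$-order of \S\ref{sec:forest}: for each pair $(x,y)$ one conditions on the walk first passing through the $\prec$-earlier of the two points and applies the strong Markov property, factoring the walk contribution into a product of Green function ratios. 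The joint percolation probability factorizes by shell-independence in the $x \not\leftrightarrow y$ case and contributes $p^{-\mathrm{d}(x,y)}$ in the $x \leftrightarrow y$ case by Lemma \ref{lem:perc:hit}, reproducing exactly the sums defining $I$ and $I_p$. The numerical constant $128$ accumulates factors of $2$ from the two pair-orderings, the $\leftrightarrow / \not\leftrightarrow$ split, the symmetrizations of the Green function, and the passage from the random hitting measure to its expectation.

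The main obstacle is the upper bound, specifically the energy estimate for the hitting measure when the random walk and the independent branching-percolation dynamics are jointly in play. The $\prec$-order furnished by the forest representation is introduced precisely to decouple these two processes: it realizes the ``two-parameter time'' in the sense of \cite{Kh2002}, with one coordinate tracking the random walk and the other tracking the nested subdivision of $\cV_k$ in the branching construction of $\Pi_p$. Once this factorization is in place, the $p^{-\mathrm{d}(x,y)}$ weight drops out of Lemma \ref{lem:perc:hit} and the entire energy integral splits as $I + I_p$ up to a universal constant. The second-moment lower bound is essentially classical; this coupling is the truly novel input.
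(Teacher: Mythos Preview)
Your lower bound is correct and essentially identical to the paper's argument: your $Z$ is the paper's $J_\mu$, the moment computations match, and Paley--Zygmund finishes.

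The upper bound, however, has a genuine gap. You correctly take the hitting measure $\mu(x)=q^{-1}\mathbb{P}^a\{X_T=x\}$ as the test measure and correctly point to the forest representation and the $\prec$-order as the structural inputs. But the mechanism you describe---``condition on the walk first passing through the $\prec$-earlier of the two points and apply the strong Markov property''---does not work. The $\prec$-order is an order on $\Z^d$, not on the walk's trajectory; the walk may well visit the $\prec$-later of $x,y$ first. More seriously, the hitting time $T$ depends jointly on the walk \emph{and} on $\Pi_p$, so $T$ is not a stopping time for the walk's filtration alone, and the product $\mu(x)\mu(y)$ appearing in the energy is a product of two marginal probabilities under the same law $\mathbb{P}^a$, not a joint probability that a single conditioning can produce. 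Your accounting of the constant $128$ as an accumulation of combinatorial factors of $2$ is a symptom of this: there is no chain of elementary doublings that yields the bound.

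What the paper actually does is build two genuine two-parameter martingales $\Lambda_{m,\rho}=\mathbb{E}^a[J_\mu\mid\mathcal{X}_m\vee\mathcal{F}_\rho]$ and $V_{m,\rho}=\mathbb{E}^a[J_\mu\mid\mathcal{X}_m\vee\mathcal{P}_\rho]$, where $\mathcal{X}_m$ is the walk filtration and $\mathcal{P}_\rho,\mathcal{F}_\rho$ are the percolation ``past'' and ``future'' relative to the ray $\rho$ in the forest. One checks pathwise that the supremum $\Lambda_*+V_*$ dominates, on the hitting event $G$, the energy-type expression evaluated at the random point $X_M$. Cairoli's maximal $L^2$ inequality for two-parameter martingales (valid because independence of walk and percolation gives the commutation condition F4) then yields $\mathbb{E}^a[(\Lambda_*+V_*)^2]\le 64\,\mathbb{E}^a[J_\mu^2]\le 128\,(I(\mu;a)+I_p(\mu;a))$; the dominant contribution to $128$ is Cairoli's constant $4^2=16$, Doob's constant squared once per parameter. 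Combining with Cauchy--Schwarz on $G$ gives $\mathbb{P}^a(G)\le 128/(I(\mu;a)+I_p(\mu;a))$. This maximal-inequality step is the missing idea in your sketch; it is precisely what replaces ``optional stopping at $T$'' when $T$ is not a stopping time for either one-parameter filtration.
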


Theorem \ref{th:RW:FP}  implies the following.

\begin{corollary}\label{cor:main}
	If $F\subseteq\Z^d$ is recurrent for $X$ $P^a$-a.s.,
	and $a:=X_0\in\Z^d$ and $F$ are  nonrandom, then
	\begin{equation}
		\Dim(\mathcal{R}_X\cap F) = -\log_2 p_c(F\,;a)\qquad\text{$P^a$-a.s.},
	\end{equation}
	where 
	\begin{equation}
		p_c(F\,;a) :={\inf} \bigg\{ p\in[2^{-d},1]:{\inf_{\substack{G\subset\Z^d:\\
		G\text{ \rm is cofinite}}}}\ c_p(F\cap G\,;a)>0 \bigg\},
	\end{equation}
	and $\{c_p(\bullet\,;a)\}_{p\le 1}$ is defined in \eqref{c:k:p}
	above.
\end{corollary}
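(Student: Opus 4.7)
The plan is to apply Theorem~\ref{th:Dim:Pi} conditionally on the Markov chain and then to use the Martin $p$-capacity estimates of Theorem~\ref{th:RW:FP} in order to evaluate the resulting percolation recurrence probability. Once $X$ is realized, $\mathcal{R}_X \cap F$ is a deterministic set and $\Pi_p$ is independent of $X$, so Theorem~\ref{th:Dim:Pi} together with the conditional $0$-$1$ law of Lemma~\ref{lem:0-1} yields, $P^a$-a.s.,
\[
\Dim(\mathcal{R}_X \cap F) = -\log_2 \inf\bigl\{p \in [2^{-d},1] : \P\{\mathcal{R}_X \cap F \Recurrent \Pi_p\} = 1\bigr\}.
\]
It then suffices to show that this infimum coincides with $p_c(F;a)$.

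For the upper bound, fix $p < p_c(F;a)$. Because $p \mapsto c_p(F\cap G;a)$ is monotone nondecreasing (since $p^{-\d(x,y)}$ is decreasing in $p$), the definition of $p_c$ forces $\inf_{G} c_p(F\cap G;a) = 0$, where the infimum is over cofinite $G$. Hence one can choose a cofinite $G$ with $c_p(F \cap G;a)$ arbitrarily small, and Theorem~\ref{th:RW:FP} gives
\[
\mathbb{P}^a\{\mathcal{R}_X \cap (F \cap G) \cap \Pi_p \neq \varnothing\} \le 128\, c_p(F \cap G;a).
\]
Since the event $\{\mathcal{R}_X \cap F \cap \Pi_p \text{ unbounded}\}$ lies inside the above event for every cofinite $G$, we conclude $\mathbb{P}^a\{\mathcal{R}_X \cap F \Recurrent \Pi_p\} = 0$. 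Fubini combined with Lemma~\ref{lem:0-1} then forces $\Dim(\mathcal{R}_X \cap F) \le -\log_2 p$ almost surely.

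For the lower bound, fix $p > p_c(F;a)$, so that $c_p(F \cap G;a) \ge 2\delta > 0$ uniformly over cofinite $G$. The left-hand side of Theorem~\ref{th:RW:FP} yields $\mathbb{P}^a\{\mathcal{R}_X \cap (F\cap G) \cap \Pi_p \neq \varnothing\} \ge \delta$ for every cofinite $G$. Taking $G_n := \Z^d \setminus \cV_n$ and noting that the resulting events are nested decreasing, one obtains $\mathbb{P}^a\{\mathcal{R}_X \cap F \cap \Pi_p \text{ unbounded}\} \ge \delta$. To upgrade this positive probability to one, I would invoke Peres's replica argument used in Corollary~\ref{co:Dim:Pi}: introduce an independent fractal percolation $\Pi_q'$ with $pq > 2^{-d}$, use $\Pi_p \cap \Pi_q' \stackrel{d}{=} \Pi_{pq}$ to reduce to a statement about $\Pi_{pq}$-recurrence, and apply Theorem~\ref{th:Dim:Pi} conditionally on $X$ once the unboundedness holds almost surely. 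Combined with Lemma~\ref{lem:0-1} this delivers $\Dim(\mathcal{R}_X \cap F) \ge -\log_2 p$ almost surely.

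The main obstacle is exactly this last upgrade from positive to full probability. For a random walk the Hewitt--Savage 0-1 law makes $\Dim(\mathcal{R}_X \cap F)$ almost surely constant and closes the gap instantly, as in the proof of Proposition~\ref{pr:BT}. For a general transient Markov chain no such constancy is automatic, and one must combine Lemma~\ref{lem:0-1} with either an iteration of the strong Markov property at the successive first hits of $F \cap \Pi_p \cap G_n$ (producing geometric decay of the failure probability along nested shells) or with the hypothesized $P^a$-a.s.\ recurrence of $F$ for $X$, which ensures that the chain keeps returning to states from which the Martin $p$-capacity lower bound still applies.
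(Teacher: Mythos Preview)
Your strategy matches the paper's: apply Theorem~\ref{th:RW:FP} to $F\cap G_N$ for nested cofinite sets $G_N=\{x:\|x\|>N\}$, then feed the resulting recurrence statement into Theorem~\ref{th:Dim:Pi} conditionally on $\mathcal{R}_X$. Your upper bound is the paper's argument almost verbatim. For the lower bound the paper also arrives at $\mathbb{P}^a\{\mathcal{R}_X\cap F\Recurrent\Pi_p\}\ge\tau(p)>0$ via the decreasing events, and then handles the upgrade you worry about in a single line: it invokes Lemma~\ref{lem:0-1} \emph{conditionally on $\mathcal{R}_X$} (so that for $P^a$-a.e.\ realization of $X$ the $\P$-probability of $\{(\mathcal{R}_X\cap F)\Recurrent\Pi_p\}$ is either $0$ or $1$), asserts the $\mathbb{P}^a$-a.s.\ conclusion, and then applies Theorem~\ref{th:Dim:Pi} once more conditionally on $\mathcal{R}_X$ to obtain $\Dim(\mathcal{R}_X\cap F)\ge-\log_2 p$.

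Your detour through an independent replica $\Pi_q'$ is unnecessary and would not help: it merely trades one positive-probability statement for another, and the paper does not use it here. Your instinct that something is needed to pass from positive $P^a$-probability to $P^a$-a.s.\ is reasonable---the paper is terse at exactly this step---and your remark that the Hewitt--Savage $0$--$1$ law closes it instantly for random walks (the paper's ultimate object) is the correct way to read that line. Drop the replica paragraph and your argument is the paper's.
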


\begin{proof}
	Suppose that there exists a number $p\in[2^{-d}\,,1]$ for which
	\begin{equation}
		\tau(p):=\tfrac12\inf c_p(F\cap G\,;a)>0, 
	\end{equation}
	where the infimum is computed over all cofinite
	sets $G\subset\Z^d$. Define
	\begin{equation}
		G_N:= \left\{x\in\Z^d:\, \|x\|>N\right\}
		\qquad\text{for all $N\ge1$.}
	\end{equation}
	According to Theorem \ref{th:RW:FP},
	\begin{equation}
		\inf_{N\ge 1}\mathbb{P}^a\left\{ \mathcal{R}_X\cap\Pi_p\cap
		F\cap G_N\neq\varnothing\right\}
		\ge\tau(p)>0.
	\end{equation}
	It follows from this and elementary properties of probabilities that
	\begin{equation}
		\mathbb{P}^a\left\{ \mathcal{R}_X\cap\Pi_p\cap
		F\cap G_N\neq\varnothing\text{ for infinitely many $N\ge1$}\right\}
		\ge\tau(p)>0.
	\end{equation}
	This in turn shows that
	\begin{equation}
		\mathbb{P}^a\{ \mathcal{R}_X\cap F\Recurrent\Pi_p\}
		\ge\tau(p)>0.
	\end{equation}
	Apply Lemma \ref{lem:0-1}, conditionally on
	$\mathcal{R}_X$, in order to deduce from the preceding that
	$\mathcal{R}_X\cap F\Recurrent \Pi_p$ a.s.\ $[\mathbb{P}^a]$. In particular,
	we apply Theorem \ref{th:Dim:Pi}, once again conditionally on $\mathcal{R}_X$,
	in order to see that
	\begin{equation}
		\Dim(\mathcal{R}_X\cap F)\ge-\log_2p\qquad\mathbb{P}^a\text{-a.s.}
	\end{equation}
	Optimize over
	our choice of $p$ to see that 
	\begin{equation}
		\Dim(\mathcal{R}_X\cap F)\ge
		-\log_2 p_c(F\,;a)\qquad\mathbb{P}^a\text{-a.s.}
	\end{equation}
	
	For the other bound, it is enough to consider the case that 
	$p_c\in  (2^{-d}\,,1]$ since $p_c=2^{-d}$ yields a trivial bound.
	Thus,  we can consider instead a number $p\in[2^{-d}\,,1)$ such that
	$\inf c_p(F\cap G\,;a)=0$, where once again the infimum is over all cofinite
	sets $G\subset\Z^d$. It is easy to deduce from this choice of $p$ and
	Theorem \ref{th:RW:FP} that
	\begin{equation}
		\lim_{N\to\infty}\mathbb{P}^a\{ \mathcal{R}_X\cap\Pi_p\cap
		F\cap G_N\neq\varnothing\}=0.
	\end{equation}
	Since the random walk $X$ is transient,
	and because $\mathcal{R}_X\cap F$ is a.s.\ recurrent,
	elementary properties of probabilities imply that
	$\mathbb{P}^a\{\mathcal{R}_X\cap F\Recurrent\Pi_p\}=0$.
	Therefore, we may apply Theorem \ref{th:Dim:Pi}, one more time conditionally 
	on $\mathcal{R}_X$,
	in order to see that $\Dim(\mathcal{R}_X\cap F)\le-\log_2p$. Optimize
	over our choice of $p$ to see that
	\begin{equation}
		\Dim(\mathcal{R}_X\cap F)\le -\log_2 p_c(F\,;a)
		\qquad\mathbb{P}^a\text{-a.s.}
	\end{equation}
	The corollary follows.
\end{proof}

Theorem \ref{th:RW:FP} has a number of other consequences as well.
The following is
a universal estimate on the expected Martin $p$-capacity of
the range of the fractal percolation set in a shell $\cS_k$.

\begin{corollary}\label{cor:MC1}
	For every point $a\in\Z^d$, integers $k\ge 0$,
	nonrandom finite sets $F\subset\Z^d$, and 
	percolation parameters $p,q\in[2^{-d}\,,1]$,
	\begin{equation}
		(256)^{-1} c_{pq}(F;a)\le\E\left[ c_p(\Pi_q\cap F;a)\right] \le 256 c_{pq}(F;a).
	\end{equation}
\end{corollary}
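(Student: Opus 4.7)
The plan is to run a replica argument in the spirit of Peres \cite{Peres1996}, parallel to the one used for Corollary \ref{co:Dim:Pi}, and to let Theorem \ref{th:RW:FP} do the heavy lifting. Concretely, I would enlarge the probability space to accommodate an auxiliary macroscopic fractal percolation $\Pi_q'$ with parameter $q$, constructed independently of both the Markov chain $X$ and of $\Pi_p$. I then compute the single quantity
\[
\Phi := \mathbb{P}^a\{ X_n \in \Pi_p \cap \Pi_q' \cap F \text{ for some } n \ge 0 \}
\]
in two different ways and compare them.

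For the first computation, I would observe that by the Bernoulli-label construction \eqref{eq:ind}, the event that a dyadic cube $Q$ survives in both $\Pi_p$ and $\Pi_q'$ has probability $pq$, and these events are independent across all cubes of $\D \cap \cV_k$ and across different $\cV_k$'s (since independence is preserved in each shell and independence between shells is built into the joint construction). This gives the distributional identity $\Pi_p \cap \Pi_q' \stackrel{d}{=} \Pi_{pq}$. Applying Theorem \ref{th:RW:FP} with percolation parameter $pq$ to the nonrandom set $F$ then yields
\[
\tfrac{1}{2} c_{pq}(F;a) \;\le\; \Phi \;\le\; 128\, c_{pq}(F;a).
\]

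For the second computation, I would condition on a realization of $\Pi_q'$ and apply Theorem \ref{th:RW:FP} to the now-nonrandom set $\Pi_q' \cap F$ with percolation parameter $p$; here I use that $X$ and $\Pi_p$ are independent of $\Pi_q'$. Averaging over $\Pi_q'$ via Fubini gives
\[
\tfrac{1}{2}\,\E\!\left[c_p(\Pi_q' \cap F;a)\right] \;\le\; \Phi \;\le\; 128\,\E\!\left[c_p(\Pi_q' \cap F;a)\right].
\]
Combining the two displays and canceling $\Phi$ delivers both claimed bounds with constant $256 = 2 \cdot 128$.

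I do not anticipate any serious technical obstacle. The only point that requires a small amount of care is the distributional identity $\Pi_p \cap \Pi_q' \stackrel{d}{=} \Pi_{pq}$, which must be checked simultaneously within each $\cV_k$ and across independent copies in different shells. After that, the whole argument is two invocations of Theorem \ref{th:RW:FP} glued together by conditioning and Fubini.
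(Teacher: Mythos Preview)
Your proposal is correct and follows essentially the same replica argument as the paper's own proof: introduce an independent copy $\Pi_q'$, use the distributional identity $\Pi_p\cap\Pi_q'\stackrel{d}{=}\Pi_{pq}$, and apply Theorem~\ref{th:RW:FP} once directly and once conditionally on $\Pi_q'$, then combine via Fubini. The only cosmetic difference is that you package both inequalities at once by sandwiching the single quantity $\Phi$, whereas the paper writes out one direction and states that the other follows similarly.
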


\begin{proof}[Proof of Corollary \ref{cor:MC1}]
	Let $\Pi_q'$ denote an independent copy of $\Pi_q$
	and denote the corresponding (independent of $\mathbb{P}^a$) measure by $\P'$
	with corresponding expectation operator $\E'$. Theorem \ref{th:RW:FP} ensures that
	\begin{equation}
		\mathbb{P}^a\{ \mathcal R_X \cap \Pi_p\cap\Pi_q'\cap F\neq\varnothing\} 
		\le 128 c_p(\Pi_q'\cap F\,;a)\qquad\text{$\P'$-a.s.}
	\end{equation}
	Therefore we integrate $[\P']$ in order to see that
	\begin{equation}\label{eq:b1}
		(\mathbb{P}^a\times\P')\{ \mathcal R_X \cap \Pi_p\cap\Pi_q'\cap F\neq\varnothing\} 
		\le128  \E[c_p(\Pi_q\cap F;a)]. 
	\end{equation}
	
	For the other bound, we recall that $\Pi_p\cap\Pi_q'$ has
	the same law [$\P\times\P'$] as $\Pi_{pq}$ does $[\P]$. Therefore,
	Theorem \ref{th:RW:FP} implies that
	\begin{equation}\label{eq:b2}\begin{split}
		(\mathbb{P}^a \times \P')\{ 
			\mathcal R_X\cap \Pi_p\cap\Pi_q'\cap F\neq\varnothing\}
			&=\mathbb{P}^a\{\mathcal R_X\cap \Pi_{pq} \cap F\neq\varnothing \}\\
		&\ge \tfrac{1}{2}c_{pq}(F;a). 
	\end{split}\end{equation}
	Together, \eqref{eq:b1} and \eqref{eq:b2} yield
	$c_{pq}(F; a) \le 256 \E[c_p(\Pi_q\cap F;a)].$
	 The other bound in the statement follows similarly.
\end{proof}

The second consequence of Theorem
\ref{th:RW:FP} is a Lamperti-type condition on recurrence
that was stated in Corollary \ref{cor:Lamperti}.

\begin{proof}[Proof of Corollary \ref{cor:Lamperti}]
	Consider the stopping times $\{T_k\}_{k=0}^\infty$
	defined by
	\begin{equation}
		T_k := \inf\left\{ n\ge 0:\, X_n\in F\cap\cS_k\right\}
		\qquad\text{for all $k\ge 0$},
	\end{equation}
	where $\inf\varnothing:=\infty$. Theorem \ref{th:RW:FP}
	ensures that
	\begin{equation}\label{eq:BPP}
		P^z\{T_m<\infty\} \asymp c_1(F\cap\cS_m;z)
	\end{equation}
	for all integers $m$, and $z\in\Z^d$.
	
	Of course, $F\Recurrent\mathcal{R}_X$
	if and only if $\sum_{k=0}^\infty \1_{\{T_k<\infty\}}=\infty$.
	Therefore, if $\sum_{k=0}^\infty
	c_1(F\cap\cS_k;a)<\infty$, then the Borel--Cantelli lemma and
	\eqref{eq:BPP} together imply
	that $\mathcal{R}_X\nRecurrent F$. Note that this portion does
	not require the Lamperti condition \eqref{cond:Lamperti}.
	The complementary half of the corollary does.

	If, on the other hand, $\sum_{k=0}^\infty c_1(F\cap \cS_k;a)=\infty$,
	then \eqref{eq:BPP} ensures that
	$\sum_{k=1}^\infty P^a\{T_k<\infty\}=\infty$. A standard
	second moment argument reduces our problem to showing 
	the existence of a positive constant $C_0$
	such that
	\begin{equation}\label{goal:BC}
		\sum_{k=0}^N\sum_{j=k}^N P^a\{T_k<\infty, T_j<\infty\}
		\le C_0 \bigg( \sum_{k=0}^N P^a\{T_k<\infty\}\bigg)^2,
	\end{equation}
	as $N\to\infty$. This is what we aim to prove.

	By the strong Markov property,
	\begin{equation}\begin{split}
		P^a\{T_k<T_{k+l}<\infty\} & = E^a\left[ \1_{\{T_k<\infty\}}
			P^{X_{T_k}}\{ T_{k+l}<\infty\}\right]\\
		&\le 128 E^a\left[ \1_{\{T_k<\infty\}}
			c_1(F\cap\cS_{k+l};X_{T_k})\right]\\
		&\le 128 P^a\{T_k<\infty\}\sup_{z\in\Z^d\cap\cS_k} c_1(F\cap\cS_{k+l};z).
	\end{split}\end{equation}
	Let us observe that, thanks to \eqref{cond:Lamperti}, there exists
	a finite constant $K>1$ such that $g(x\,,y)\le K g(a\,,y)$
	whenever $x\in\cS_n$ and $y\in\cS_m$ for integers $m>n\ge K$
	such that $m\ge n+K$. Thus, it follows readily from the
	definition \eqref{c:k:p} of $c_1$ that
	\begin{equation}
		\max_{z\in\cS_k\cap \Z^d}c_1(F\cap\cS_{k+l};z) \le K
		c_1(F\cap\cS_{k+l};a), 
	\end{equation}
	uniformly for all integers $k,l\ge K$. 
	In accord with \eqref{eq:BPP},
	\begin{equation}
		P^a\{T_k<T_{k+l}<\infty\}\le 256K P^a\{T_k<\infty\}P^a\{T_{k+l}<\infty\},
	\end{equation}
	whenever $k,l\ge K$.
	
	Similarly, we can appeal to \eqref{cond:Lamperti} in order
	to find a finite constant $K'>1$ such that
	\begin{equation}
		P^a\{T_{k+l}<T_k<\infty\} 
		\le 256K' P^a\{T_{k+l}<\infty\}P^a\{T_k<\infty\},
	\end{equation}
	as long as $k,l\ge K'$. Let $K_0:=256\max(K\,,K')$.
	Because we can write $P^a\{ T_k<\infty,T_{k+l}<\infty\}$ as
	$P^a\{T_k < T_{k+l}<\infty\} + P^a\{T_{k+l}<T_k<\infty\}$,
	the preceding two displayed bounds together imply that
	\begin{align}\notag
		&\sum_{k=K_0}^N\sum_{j=k}^N P^a\{ T_k<\infty, T_j<\infty\}\\\notag
		&\le\sum_{k=K_0}^N\sum_{j=k+K_0}^N P^a\{ T_k<\infty, T_j<\infty\}
			+ \sum_{k=K_0}^N\sum_{j=k}^{k+K_0-1} P^a\{ T_k<\infty, T_j<\infty\}\\
		&\le K_0\bigg(\sum_{k=0}^N P^a\{T_k<\infty\}\bigg)^2
			+ K_0\sum_{k=0}^N P^a\{T_k<\infty\}.
	\end{align}
	Since $\sum_{k=0}^N P^a\{T_k<\infty\}\to \infty$ as $N\to\infty$,
	we obtain \eqref{goal:BC} if $C_0 = 2K_0$ for all $N$ large.  The corollary follows immediately.
	\end{proof}

Let us mention a final corollary of Theorem
\ref{th:RW:FP}. That corollary presents a more tractable 
formula for $\Dim(\mathcal{R}_X\cap F)$,
valid under the Lamperti-type condition
\eqref{cond:Lamperti}.

\begin{corollary}\label{cor:RW:Dim}
	Let $F\subset\Z^d$ and $X_0:=a\in\Z^d$ be non random.
	Then, $\Dim(\mathcal{R}_X\cap F)\le -\log_2 p_c(F\,;a)$ a.s.\ $[P^a]$,
	where
	\begin{align}\label{def:p_c}
		p_c(F\,;a) :&={\sup}\Big\{ p\in[2^{-d}\,,1]:\ 
			\sum_{k=0}^\infty c_p(F\cap\cS_k;a)<\infty\Big\}\\
		&={\inf\Big\{ p\in[2^{-d}\,,1]:\ \sum_{k=0}^\infty
			c_p(F\cap\cS_k;a)=\infty\Big\}.}
	\end{align}
	If, in addition, \eqref{cond:Lamperti} holds, then
	\begin{equation}
		\Dim(\mathcal{R}_X\cap F)=-\log_2p_c(F\,;a) 
		\qquad\text{a.s.\  $[\mathbb{P}^a]$.}
	\end{equation}
\end{corollary}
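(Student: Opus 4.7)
The plan is to prove the upper bound without invoking \eqref{cond:Lamperti} and then to derive the matching lower bound from that condition. For the upper bound, fix any $p\in[2^{-d},1]$ with $\sum_{k\ge 0} c_p(F\cap\cS_k;a)<\infty$. Applying Theorem \ref{th:RW:FP} shell by shell,
\[
\sum_{k=0}^\infty \mathbb{P}^a\{\mathcal{R}_X\cap\Pi_p\cap F\cap\cS_k\neq\varnothing\}
\le 128\sum_{k=0}^\infty c_p(F\cap\cS_k;a)<\infty,
\]
so the Borel--Cantelli lemma yields $\mathcal{R}_X\cap F\nRecurrent\Pi_p$ $\mathbb{P}^a$-a.s. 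By Fubini combined with the zero--one law of Lemma \ref{lem:0-1}, this transfers to the statement that for $P^a$-a.e.\ realization of the walk, $\mathcal{R}_X\cap F\nRecurrent\Pi_p$ holds $\P$-a.s., and Theorem \ref{th:Dim:Pi} (cf.\ Remark \ref{rem:Dim:Pi}) then forces $\Dim(\mathcal{R}_X\cap F)\le -\log_2 p$ $P^a$-a.s. Taking a supremum over admissible $p$ produces the upper bound.

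For the lower bound, assume \eqref{cond:Lamperti} and fix $p>p_c(F;a)$, so $\sum_k c_p(F\cap\cS_k;a)=\infty$. The aim is to establish $\mathcal{R}_X\cap F\Recurrent\Pi_p$ a.s., for then Theorem \ref{th:Dim:Pi} applied conditionally on $\mathcal{R}_X$ gives $\Dim(\mathcal{R}_X\cap F)\ge -\log_2 p$, and optimizing over $p>p_c(F;a)$ closes the gap. Using Corollary \ref{cor:MC1} with parameters $(p,q)\mapsto(1,p)$,
\[
\E\bigl[c_1(\Pi_p\cap F\cap\cS_k;a)\bigr]\asymp c_p(F\cap\cS_k;a),
\]
so $\sum_k \E Y_k=\infty$ where $Y_k:=c_1(\Pi_p\cap F\cap\cS_k;a)$. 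By Lemma \ref{lm:basic}(ii) the $Y_k$ are $\P$-independent; Theorem \ref{th:RW:FP} at $p=1$ yields the uniform bound $c_1(\,\cdot\,;a)\le 2$, since hitting probabilities never exceed one; and Kolmogorov's three-series theorem applied to bounded nonnegative independent summands then forces $\sum_k Y_k=\infty$ $\P$-a.s.

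Now fix any realization of $\Pi_p$ for which $\sum_k c_1(F\cap\Pi_p\cap\cS_k;a)=\infty$. Because $\Pi_p$ is independent of $X$ and \eqref{cond:Lamperti} is a purely deterministic condition on the Green function of $X$, Corollary \ref{cor:Lamperti} applies conditionally on $\Pi_p$ to the (now deterministic) set $F\cap\Pi_p$, yielding $\mathcal{R}_X\Recurrent (F\cap\Pi_p)$ $P^a$-a.s. Integrating over $\Pi_p$ produces $\mathcal{R}_X\cap F\Recurrent\Pi_p$ a.s.\ $[\mathbb{P}^a]$, as required.

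The main obstacle is the passage from $\sum_k \E Y_k=\infty$ to $\sum_k Y_k=\infty$ $\P$-a.s.; independence of the $Y_k$ is not by itself enough, and what saves the argument is the uniform bound $c_1(\,\cdot\,;a)\le 2$, itself a direct corollary of Theorem \ref{th:RW:FP}. A secondary subtlety is that Corollary \ref{cor:Lamperti} is stated for nonrandom $F$, but its proof uses only the strong Markov property of $X$ and the deterministic Green-function estimate \eqref{cond:Lamperti}, so it remains valid conditionally on an independent random $F$ such as $F\cap\Pi_p$.
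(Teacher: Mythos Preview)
Your upper bound is the contrapositive of the paper's argument and is equivalent to it.

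Your lower bound is correct but follows a genuinely different route. The paper works directly with the indicator sum
\[
\tau_N:=\sum_{k=0}^N\1\{\mathcal{R}_X\cap F\cap\Pi_p\cap\cS_k\neq\varnothing\},
\]
shows $\mathbb{E}^a(\tau_N)\to\infty$ via Theorem~\ref{th:RW:FP}, and then establishes $\mathbb{E}^a(\tau_N^2)\le A(\mathbb{E}^a(\tau_N))^2$ by combining the strong Markov property of $X$ with the Lamperti condition~\eqref{cond:Lamperti} to control the cross-shell terms $\max_{z\in\cS_j}c_p(F\cap\cS_k;z)$; a Paley--Zygmund argument plus the zero--one law then gives $\tau_N\to\infty$ a.s. Your approach instead \emph{decouples} the two sources of randomness: first you use Corollary~\ref{cor:MC1} with $(p,q)\mapsto(1,p)$, the shell independence from Lemma~\ref{lm:basic}(ii), and the uniform bound $c_1(\cdot\,;a)\le 2$ from Theorem~\ref{th:RW:FP} to push $\sum_k c_p(F\cap\cS_k;a)=\infty$ to $\sum_k c_1(\Pi_p\cap F\cap\cS_k;a)=\infty$ $\P$-a.s.; then, conditioning on $\Pi_p$, you invoke Corollary~\ref{cor:Lamperti} on the realized set $F\cap\Pi_p$. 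This is more modular and avoids duplicating the second-moment computation already carried out in the proof of Corollary~\ref{cor:Lamperti}, at the cost of relying on Corollary~\ref{cor:MC1}. The paper's route is more self-contained and makes the role of~\eqref{cond:Lamperti} in controlling the two-point function of the chain explicit. Both arguments use the Lamperti condition at essentially the same place: to bound $g(x,y)/g(a,y)$ uniformly across shells.
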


\begin{remark}
	If $X$ is a random walk that satisfies the conditions of
	Proposition \ref{pr:BT} and starts at $0$, 
	then our previous comments in 
	Example \ref{ex:RW:nice} imply that 
	\begin{equation}\begin{split}
		c_p(F\cap\cS_k\,;0) \asymp 2^{k\alpha}
		\mathop{\sum\sum}_{x,y\in\cS_k} g(x\,,y)p^{-\d(x,y)}\,\mu(x)\,\mu(y).
	\end{split}\end{equation}
\end{remark}

\begin{proof}[Proof of Corollary \ref{cor:RW:Dim}]
	First we prove the upper bound on $\Dim(\mathcal{R}_X\cap F)$.
	
	If $\Dim(\mathcal{R}_X\cap F)>-\log_2 p$ for
	some $p\in(2^{-d}\,,1]$, then Theorem \ref{th:Dim:Pi} ensures that
	$X\Recurrent\Pi_p$; see especially Remark \ref{rem:Dim:Pi}.
	This, the easy half of the Borel--Cantelli lemma, and Theorem
	\ref{th:RW:FP}  together imply that 
	$\sum_{k=0}^\infty c_p(F\cap\cS_k;a)=\infty$. Optimize over
	$p\in(2^{-d}\,,1]$ in order to deduce that 
	\begin{equation}
		\Dim(\mathcal{R}_X\cap F)\le -\log_2 p_c(F\,;a)
		\qquad\mathbb{P}^a\text{-a.s.}
	\end{equation}
	
	In the reverse direction we assume that \eqref{cond:Lamperti} holds,
	and strive to show that
	\begin{equation}\label{Goal:LB}
		\Dim(\mathcal{R}_X\cap F)\ge-\log_2 p_c(F\,;a)
		\qquad\mathbb{P}^a\text{-a.s.}
	\end{equation}
	There is nothing to prove if $p_c(F\,;a)=1$. Therefore,
	we assume without loss of generality that
	\begin{equation}
		2^{-d} \le p_c(F\,;a) <1.
	\end{equation}
	
	According to Theorem \ref{th:RW:FP}, and thanks to 
	the definition of the critical probability $p_c(F\,;a)$,
	$\sum_{k=0}^\infty \mathbb{P}^a\{\mathcal{R}_X\cap F\cap\Pi_p\cap \cS_k\neq\varnothing\}
	=\infty$ for every $p\in(p_c(F\,;a)\,,1]$. That is,
	\begin{equation}\label{eq:tau:N}
		\lim_{N\to\infty}\mathbb{E}^a(\tau_N)=\infty,
		\text{ where }\tau_N:=\sum_{k=0}^N\1\{\mathcal{R}_X\cap
		F\cap \Pi_p\cap\cS_k
		\neq\varnothing\}.
	\end{equation}
	Next we verify that there exists a uniform positive constant $A$ so that
	\begin{equation}\label{goal:Et^2}
		\mathbb{E}^a\left(\tau_N^2\right) \le A \big( \mathbb{E}^a(\tau_N)\big)^2
		\qquad\text{as $N\to\infty$}.
	\end{equation}
	
	By the Markov property of $X$ and
	the particular construction of $\Pi_p$,
	\begin{align}
		&\mathbb{E}^a\left(\tau_N^2\right)\\\notag
		&\le 2
			\mathop{\sum\sum}\limits_{0\le j\le k\le N}
			\mathbb{P}^a\left\{ \mathcal{R}_X\cap F\cap \Pi_p\cap\cS_j
			\neq\varnothing\,,
			\mathcal{R}_X\cap F\cap\Pi_p\cap\cS_k
			\neq\varnothing\right\}\\\notag
		&\le 2
			\mathop{\sum\sum}\limits_{0\le j\le k\le N}
			\mathbb{P}^a\left\{ \mathcal{R}_X\cap F\cap\Pi_p\cap\cS_j
			\neq\varnothing\right\}\max_{z\in\Z^d\cap\cS_j}\mathbb{P}^z\left\{
			\mathcal{R}_X\cap F\cap\Pi_p\cap\cS_k
			\neq\varnothing\right\}.
	\end{align}
	Theorem \ref{th:RW:FP} then implies that
	\begin{equation}
		\mathbb{E}^a\left(\tau_N^2\right)\le C
		\mathop{\sum\sum}\limits_{0\le j\le k\le N}
		c_p(F\cap\cS_j;a)\max_{z\in\Z^d\cap\cS_j}
		c_p(F\cap \cS_k;z),
	\end{equation}
	where $C:=32768$. We now apply an argument very similar to the
	one used to produce \eqref{goal:BC} in order to see that there exists
	an integer $K_*>1$ such that
	\begin{equation}
		\max_{z\in\Z^d\cap
		\cS_j}c_p(F\cap \cS_k;z)\le K_* c_p(F\cap \cS_k;a),
	\end{equation}
	as long as $k\ge j+K_*$. In this way we find that
	\begin{align}
		&\mathbb{E}^a\left(\tau_N^2\right)\\\notag
		&\le CK_*\bigg(\sum_{j=0}^N c_p(F\cap \cS_k;a)\bigg)^2
			+ \mathop{\sum\sum}\limits_{\substack{0\le j\le k\le N:\\
			k< j+K_*}} c_p(F\cap\cS_j;a)\max_{z\in\Z^d\cap\cS_j}
			c_p(F\cap \cS_k;z).
	\end{align}
	Since $\sup_{z\in\Z^d} c_p(F\cap\cS_k;z)\le 2$ [see Theorem \ref{th:RW:FP}],
	it follows that
	\begin{equation}
		\mathbb{E}^a\left(\tau_N^2\right)
		\le CK_*\bigg(\sum_{j=0}^N c_p(F\cap \cS_k;a)\bigg)^2
		+ 2K_*\sum_{j=0}^N c_p(F\cap\cS_j;a).
	\end{equation}
	Therefore, Theorem \ref{th:RW:FP} shows that
	\begin{equation}
		\mathbb{E}^a(\tau_N^2)\le 
		4CK_*[\mathbb{E}^a(\tau_N)]^2+ 2K_*\mathbb{E}^a(\tau_N).
	\end{equation}
	Because of the 0-1 law [see Lemma \ref{lem:0-1}],
	this and \eqref{eq:tau:N} together imply that $\tau_N\to\infty$
	a.s.\ $[\mathbb{P}^a]$ as $N\to\infty$. This is another way to state that
	$\mathcal{R}_X\cap F\Recurrent\Pi_p$ a.s.\ $[\mathbb{P}^a]$.
	Theorem \ref{th:Dim:Pi}---see, in particular, Remark \ref{rem:Dim:Pi}---then
	implies that
	\begin{equation}
		\Dim(\mathcal{R}_X\cap F)\ge -\log_2 p\qquad
		\mathbb{P}^a\text{-a.s.}
	\end{equation}
	Since $p\in(p_c(F\,;a)\,,1]$ were arbitrary, the lower bound \eqref{Goal:LB} follows.
\end{proof}

\begin{proof}[Proof of Theorem \ref{th:RW:FP}]
	Because $\mathbb{P}^a\{X_n\in\Pi_p\cap F\text{ \rm for some $n\ge 0$}\}$
	is equal to 
	\begin{equation}
		\sup_{\substack{F_0\subseteq F:\\
		F_0\text{ is finite}}}
		\mathbb{P}^a\{X_n\in\Pi_p\cap F_0\text{ \rm for some $n\ge 0$}\},
	\end{equation}
	we can assume without loss of generality that $F$ is a finite set.
	
	The first inequality of the proposition
	follows readily by adapting the second-moment argument
	of Benjamini et al \cite{BPP1995}. The few details follow.
	
	For every $x=(x_1, \dots, x_d)\in \Z^d$, there exists a unique
	positive integer $k$ such that $x \in \cS_k$. Let $\Delta(x) := k+1$
	for this pairing of $x\in\Z^d$ and $k\ge 1$.
	Then we define, for all probability measures $\mu\in M_1(F)$, a
	nonnegative random variable
	\begin{equation}
		J_{\mu} := 
		\sum_{n=0}^\infty \frac{\mu(X_n)}{g(a\,,X_n)} 
		\frac{\1_{\{X_n\in\Pi_p\}}}{p^{\Delta(X_n)}},
	\end{equation}
	where $\mu(w) := \mu(\{w\})$ for every $w\in\Z^d$.
	The preceding display contains an almost surely well-defined sum 
	because the summands are non negative
	and $\mu(X_n)/g(a\,,X_n)\le1$ a.s.\ [$P^a$] for all $n\ge 0$.
	We can therefore rearrange the sum and write 
	\begin{equation}\label{J}
		J_{\mu} = \sum_{n=0}^\infty\sum_{x\in\Z^d} \frac{\1_{\{x\}}(X_n)}{g(a\,,x)}
		\frac{\1_{\Pi_p}(x)}{p^{\Delta(x)}}\mu(x).
	\end{equation}
	Because
	 $\P\{x\in\Pi_p\}= p^{\Delta(x)}$ for all $x\in\Z^d$,
	\begin{equation}
		\mathbb{E}^a(J_{\mu})=1.
	\end{equation}
	Similarly, we compute 
	\begin{align}\notag
		\mathbb{E}^a(J_{\mu}^2) &\le 2\sum_{x,y\in\Z^d}
			\mathop{\sum\sum}\limits_{m\ge n\ge 0} \frac{P^a
			\{X_n=x\,,X_m=y\}}{g(a\,,x)g(a\,,y)}
			\frac{\P\left\{x,y\in\Pi_p\right\}}{p^{\Delta(x)+\Delta(y)}}\mu(x)\mu(y)\\
		&=2( I(\mu\,;a)+I_p(\mu\,;a)).
			\label{EJ^2}
	\end{align}
	
	If $J_{\mu}>0$ for some $\mu\in M_1(F)$, then certainly 
	$X_n\in\Pi_p\cap F$ for some $n\ge 0$. Therefore, the
	Paley--Zygmund inequality implies that
	for every $\mu\in M_1(F)$,
	\begin{align}\label{prec1}
		\mathbb{P}^a\{X_n\in\Pi_p\cap F\text{ for some $n\ge 0$}\} &\ge
			\mathbb{P}^a\{J_{\mu}>0\}\ge \frac{[\mathbb{E}^a(J_{\mu})]^2}{\mathbb{E}^a(J_{\mu}^2)}\\\notag
		&\ge \left[ 2\left\{ I(\mu\,;a)+I_p(\mu\,;a)\right\} \right]^{-1}.
	\end{align}
	The left-most quantity does not depend on $\mu\in M_1(F)$; therefore,
	we may optimize the  right-most quantity in \eqref{prec1}
	over all probability measures  $\mu\in M_1(F)$ in order to see that
	$\mathbb{P}^a\{X_n\in\Pi_p\cap F\text{ for some $n\ge 0$}\}
	\ge\frac12 c_p(F;a)$. This is the desired lower bound on the hitting probability
	of the theorem.
	
	Next we verify the complementary probability, still assuming without
	loss of generality that $F$ is finite; that is,
	$F\subseteq\cV_k$ for a nonnegative integer $k$ that
	is still held fixed throughout. Without loss of generality, we may also assume
	\begin{equation}\label{assume}
		\mathbb{P}^a\{X_m\in\Pi_p\cap F\text{ for some $m\ge 0$}\}>0.
	\end{equation}
	Otherwise, there is nothing to prove.
	
	In order to establish the more interesting
	second inequality of the theorem we will need
	to introduce some notation.
	Let $\mathcal{X}_n$ denote the sigma-algebra
	generated by $X_0,\ldots,X_n$ for all $n\ge 0$. 
	
	Recall that, because of our forest representation of
	$\Z^d$, we identify every point $\rho\in\cS_k\cap\Z^d$
	with a ray in a finite tree $\mathcal{T}_k$, which was
	described in \S\ref{sec:forest}. Recall also that $\mathcal{T}_k$
	has been embedded in $\R^2$ so that its deepest vertices lie
	on the real axis of $\R^2$. In this way, we can identify every
	point $\rho\in\cS_k\cap\Z^d$ with a point, which we continue
	to write as $\rho$, on the real axis.
	
	For every $\rho\in\cV_k\cap\Z^d$,
	let $\mathcal{P}_\rho$ denote the sigma-algebra generated by
	the fractal-percolation weights
	$I_p(Q_0(y)),\ldots,I_p(Q_{\Delta(y)+1}(y))$ for all $y\in \Z^d\cap\cV_k$ 
	such that $y\prec \rho$. Similarly, let
	$\mathcal{F}_\rho$ the sigma-algebra generated by
	all of the fractal-percolation weights
	$I_p(Q_0(y)),\ldots,I_p(Q_{\Delta(y)+1}(y))$, where $y\in \Z^d\cap\cV_k$
	satisfies $y\succ \rho$. If we think of $\rho$ as a maximum-depth
	vertex of $\mathcal{T}_k$ and the latter is embedded in $\R^2$,
	as was mentioned earlier, then we can think of $\mathcal{P}_\rho$ as the information, on
	the fractal percolation, on $\mathcal{T}_k$ that lies to the left of
	$\rho$ [including $\rho$]; this is the ``$\mathcal{P}$ast'' of $\rho$. 
	Similarly, we may think of
	$\mathcal{F}_\rho$ as the information to the right of $\rho$;
	this is the ``$\mathcal{F}$uture'' of $\rho$.
	
	Next we define two ``2-parameter martingales,'' $\Lambda$ and
	$V$ as follows:
	\begin{equation}
		\Lambda_{m,\rho} :=\mathbb{E}^a[J_{\mu}\,|\,\mathcal{X}_m\vee
		\mathcal{F}_\rho];\qquad
		V_{m,\rho} :=\mathbb{E}^a[J_{\mu}\,|\,\mathcal{X}_m\vee
		\mathcal{P}_\rho],
	\end{equation}
	for all $m\ge 0$ and $\rho\in\cV_k\cap\Z^d$.
	Because our random walk is independent of the fractal
	percolation, we may write the following
	after we appeal to independence:
	\begin{equation}\begin{split}
		&\Lambda_{m,\rho}\\
		&\ge \sum_{n=m}^\infty\sum_{\substack{x\in\Z^d:\\
			x\succ\rho}}
			\frac{P^a( X_n=x\,|\, \mathcal{X}_m)}{g(a\,,x)}
			\frac{\P(x\in\Pi_p\,|\,\mathcal{F}_\rho)}{p^{\Delta(x)}}\mu(x)\cdot
			\1_{\{X_m=\rho\in\Pi_p\cap F\}}.
	\end{split}\end{equation}
	The Markov property implies the a.s.-inequality,
	\begin{equation}\begin{split}
		\Lambda_{m,\rho} 
			&\ge \sum_{\substack{x\in\Z^d:\\
			x\succ\rho\\
			x\leftrightarrow  \rho}}
			\frac{g(X_m\,,x)}{g(a\,,x)}p^{-{\rm d}(x,X_m)}\mu(x)\cdot
			\1_{\{X_m=\rho\in\Pi_p\cap F\}}\\\notag
		&\hskip1.6in + \sum_{\substack{x\in\Z^d:\\
			x\succ\rho\\
			x\not\leftrightarrow  \rho}}
			\frac{g(X_m\,,x)}{g(a\,,x)}\mu(x)\cdot
			\1_{\{X_m=\rho\in\Pi_p\cap F\}}.
	\end{split}\end{equation}
	We stress, once again, that the ratio of the Green's functions are well defined
	$P^a$-a.s.
	
	Similarly,
	\begin{align}\notag
		V_{m,\rho} 
			&\ge \sum_{n=m}^\infty\sum_{\substack{x\in\Z^d:\\
			x\prec\rho}}
			\frac{P^a( X_n=x\,|\, \mathcal{X}_m)}{g(a\,,x)}
			\frac{\P(x\in\Pi_p\,|\,\mathcal{P}_\rho)}{p^{\Delta(x)}}\mu(x)\cdot
			\1_{\{X_m=\rho\in\Pi_p\cap F\}}\\
		&= \sum_{\substack{x\in\Z^d:\\
			x\prec\rho\\x\leftrightarrow \rho}}
			\frac{g(X_m\,,x)}{g(a\,,x)}p^{-{\rm d}(x,X_m)}\,\mu(x)\cdot
			\1_{\{X_m=\rho\in\Pi_p\cap F\}}\\\notag
		&\hskip1.6in + \sum_{\substack{x\in\Z^d:\\
			x\prec\rho\\x\not\leftrightarrow \rho}}
			\frac{g(X_m\,,x)}{g(a\,,x)}\,\mu(x)\cdot
			\1_{\{X_m=\rho\in\Pi_p\cap F\}}.
	\end{align}
	Therefore, with probability one,
	\begin{equation}\begin{split}
		\Lambda_{m,\rho} + V_{m,\rho} 
			&\ge \sum_{\substack{x\in\Z^d:\\x\leftrightarrow \rho}}
			\frac{g(X_m\,,x)}{g(a\,,x)}p^{-{\rm d}(x,X_m)}\mu(x)\cdot
			\1_{\{X_m=\rho\in\Pi_p\cap F\}}\\
		&\hskip.5in + \sum_{\substack{x\in\Z^d:\\x\not\leftrightarrow \rho}}
			\frac{g(X_m\,,x)}{g(a\,,x)}\mu(x)\cdot
			\1_{\{X_m=\rho\in\Pi_p\cap F\}}.
	\end{split}\end{equation}
	There are only a countable number of such pairs $(m\,,\rho)$.
	Therefore, the previous lower bound on $\Lambda_{m,\rho}$
	holds, off a single null set, simultaneously for all integers
	$m\ge 0$ and integral points $\rho\in\cV_k\cap\Z^d$.
	
	In order to simplify the typesetting, let us write
	\begin{equation}
		\Lambda_*:=\sup_{m\ge 0,\rho\in\cV_k\cap\Z^d}\Lambda_{m,\rho},\qquad
		V_*:=\sup_{m\ge 0,\rho\in\cV_k\cap\Z^d}V_{m,\rho}.
	\end{equation}
	We might note that, with probability one,
	\begin{equation}\label{eq:Lambda:V}\begin{split}
		\Lambda_*+ V_*
			&\ge \sum_{\substack{x\in\Z^d:\\x\leftrightarrow  X_m}}
			\frac{g(X_m\,,x)}{g(a\,,x)}p^{-{\rm d}(x,X_m)}\mu(x)\cdot
			\1_{\{X_m\in\Pi_p\cap F\}}\\
		&\hskip.5in + \sum_{\substack{x\in\Z^d:\\x\not\leftrightarrow  X_m}}
			\frac{g(X_m\,,x)}{g(a\,,x)}\mu(x)\cdot
			\1_{\{X_m\in\Pi_p\cap F\}},
	\end{split}\end{equation}
	simultaneously for all integers $m\ge 0$.
	
	Now we apply an important idea that is,
	in a different form due to Fitzsimmons and Salisbury \cite{FS1989}.
	Define  a $\Z_+\cup\{\infty\}$-valued random variable $M$ by
		\[
		M:=\inf\{m\ge0:\, X_m\in \Pi_p\cap F\},
		\] 
	where $\inf\varnothing:=\infty$.
	$M$ is a stopping time with respect to the filtration of the random walk,
	conditionally on the entire history of the fractal percolation,
	$\P$-a.s.\ on $\{\Pi_p\cap F\neq\varnothing\}$. 
	
	Consider the event,
	\begin{equation}\label{G}
		G := \{ \omega\in\Omega:\ M(\omega)<\infty\,, 
		\Pi_p(\omega)\cap F\neq\varnothing\}.
	\end{equation}
	Hypothesis \eqref{assume} is  another way to state $\mathbb{P}^a(G)>0$.
	Moreover, \eqref{eq:Lambda:V} implies the following
	key a.s.\ inequality:
	\begin{equation}
		\Lambda_* + V_*
		\ge \sum_{x\in\Z^d}
		\frac{g(X_M\,,x)}{g(a\,,x)}\left\{p^{-{\rm d}(x,X_M)}
		\1_{\{x\leftrightarrow  X_M\}}
		+\1_{\{x\not\leftrightarrow  X_M\}}\right\}\mu(x)\cdot
		\1_G.
	\end{equation}
	The preceding is valid $\mathbb{P}^a$-a.s.\ for any probability measure $\mu$ on $F$.
	We apply it using the following particular choice:
	\begin{equation}\label{mu}
		\mu(x) := \mathbb{P}^a( X_M=x\,|\ G)\qquad(x\in\Z^d).
	\end{equation}
	For this particular choice of $\mu\in M_1(F)$ we obtain the following:
	\begin{align}
		\label{here}
		&\mathbb{E}^a\big(| \Lambda_* + V_*|^2\big)\\\notag
		&\ge \mathbb{E}^a\bigg(\bigg[\sum_{x\in\Z^d}
			\frac{g(X_M\,,x)}{g(a\,,x)}
			\left\{p^{-{\rm d}(x,X_M)}
			\1_{\{x\leftrightarrow  X_M\}}
			+\1_{\{x\not\leftrightarrow  X_M\}}\right\}\mu(x)\bigg]^2\,\bigg|
			G\bigg)\cdot\mathbb{P}^a(G)\\\notag
		&\ge \bigg[\mathbb{E}^a\bigg(\sum_{x\in\Z^d}
			\frac{g(X_M\,,x)}{g(a\,,x)}
			\left\{p^{-{\rm d}(x,X_M)}
			\1_{\{x\leftrightarrow  X_M\}}
			+\1_{\{x\not\leftrightarrow  X_M\}}\right\}\mu(x)\,\bigg|
			G\bigg)\bigg]^2\cdot\mathbb{P}^a(G)\\\notag
		&= \left[ I(\mu\,;a)+I_p(\mu\,;a)\right]^2
			\cdot\mathbb{P}^a(G),
	\end{align}
	$\mathbb{P}^a$-a.s., thanks to the Cauchy--Schwarz inequality
	and out special choice of $\mu$ in \eqref{mu}. [The conditional expectation
	is well defined since $\mathbb{P}^a(G)>0$.]
	
	Recall that the forest representation of $\Z^d$ in \S\ref{sec:forest} identifies 
	$\rho\in\cV_k\cap\Z^d$
	with a certain finite subset of the real line. With this in mind,
	we see that  $\{\Lambda_{m,\rho}\}_{m\ge 0,
	\rho\in\cV_k\cap\Z^d}$ is a 2-parameter martingale
	under the probability
	measure $\mathbb{P}^a$, in the sense of
	Cairoli \cite{Cai1977}, with respect to the 2-parameter filtration 
	\begin{equation}
		\left\{\mathcal{X}_m\vee
		\mathcal{P}_\rho\right\}_{m\ge 0,\rho\in\cV_k\cap \Z^d}.
	\end{equation}
	The latter filtration satisfies the commutation hypothesis (F4) of Cairoli \cite{Cai1977}
	because $\mathcal{X}$
	and $\mathcal{P}$ are independent; 
	see Khoshnevisan \cite[\S3.4, p.\ 35]{Kh2002} for a more modern account.
	Therefore,
	Cairoli's maximal inequality \cite[Corollary 3.5.1, p.\ 37]{Kh2002} implies that
	$\mathbb{E}^a(\Lambda^2_*)\le 16\sup_{m,\rho}\mathbb{E}^a(\Lambda_{m,\rho}^2).$
	This and Jensen's inequality together imply that
	\begin{equation}
		\mathbb{E}^a(\Lambda^2_*)\le 16\mathbb{E}^a(J_{\mu}^2).
	\end{equation}
	
	Similarly, we can prove that
	$\mathbb{E}^a(V^2_*)\le 16\mathbb{E}^a(J_{\mu}^2).$
	Therefore, we may combine these remarks with \eqref{EJ^2} 
	in the following manner:
	\begin{equation}\label{BLIP}
		\mathbb{E}^a\left(|\Lambda_*+V_*|^2\right) \le 64\mathbb{E}^a(J_{\mu}^2)
		\le 128\left\{ I(\mu\,;a) + I_p(\mu\,;a)\right\}.
	\end{equation}
	Because of the above bound and \eqref{here},
	and since $\mathbb{P}^a(G)>0$ [see \eqref{assume}], it follows that
	$I(\mu\,;a)+I_p(\mu\,;a)$ is 
	strictly positive and finite. Therefore, we may resolve \eqref{here} 
	using \eqref{BLIP} in order to obtain the inequality
	\begin{equation}
		\mathbb{P}^a(G) \le \frac{128}{I(\mu\,;a)+I_p(\mu\,;a)} \le 128 c_p(F\,;a).
	\end{equation}
	This completes our proof. 
\end{proof}

\section{Macroscopic Minkowski dimension}\label{sec:Dim_M}
Let us recall \cite{BT1989,BT1992} that the \emph{macroscopic
upper Minkowski dimension} of a set $A\subset\Z^d$ is defined as\footnote{%
Barlow and Taylor write ${\rm dim}_{\rm UM}$ in place of our
$\overline{{\rm Dim}}_{\rm M}$.}
\begin{equation}
	\overline{\text{Dim}}_{\rm M}(A) :=
	\limsup_{n\to\infty} n^{-1}\log_2(\card\left(A\cap\cV_n\right)),
\end{equation}
where $\log_2$ is the usual logarithm in base two.

In analogy with the usual [small-scale] theory of the  dimensions, one always has
\begin{equation}
	\Dim (A) \le \overline{\text{Dim}}_{\rm M}(A),
\end{equation}
for all sets $A\subseteq\Z^d$; see Barlow and Taylor
\cite{BT1992}. The Minkowski dimension is perhaps the most commonly used
notion of large-scale dimension, in some form or another, in part because it is
easy to understand and in many cases compute. 

In the context of random walks, we have the following
elegant formula for the Minkowski dimension of the range of a transient
random walk on $\Z^d$.

\begin{theorem}\label{th:Dim_M}
	Let $X$ denote a transient random walk on $\Z^d$, with
	Green's function $g$, as before. Then, with probability one,
	\begin{equation}\label{eq:Dim_M}
		\overline{\text{\rm Dim}}_{\rm M}(\mathcal{R}_X)=\gamma_c,
	\end{equation}
	where
	\begin{equation}\label{eq:gamma_c}
		\gamma_c := \inf\left\{
		\gamma\in(0\,,d):\ \sum_{x\in\Z^d\setminus\{0\}} \frac{g(0\,,x)}{
		\|x\|^\gamma}<\infty\right\},
	\end{equation}
	where $\inf\varnothing:=d$.
\end{theorem}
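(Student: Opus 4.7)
The plan is to prove the two inequalities separately. By the hitting-probability formula \eqref{HitPoints}, writing $M_n:=|\mathcal{R}_X\cap\cV_n|$ and $a_k:=\sum_{x\in\cS_k}g(0,x)$, we have $E^0[M_n]=g(0,0)^{-1}\sum_{k\le n}a_k$. Since $\|x\|\asymp 2^k$ on $\cS_k$, a dyadic comparison gives $\sum_x g(0,x)/\|x\|^\gamma<\infty$ iff $\sum_k a_k 2^{-k\gamma}<\infty$, and a short calculation identifies $\gamma_c=\limsup_n n^{-1}\log_2 a_n=\limsup_n n^{-1}\log_2 E^0[M_n]$. The theorem therefore reduces to showing $\overline{\mathrm{Dim}}_{\mathrm{M}}(\mathcal{R}_X)=\limsup_n n^{-1}\log_2 E^0[M_n]$ a.s.

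The upper bound is routine. Fix $\gamma>\gamma_c$ and pick $\gamma'\in(\gamma_c,\gamma)$; then $a_k=O(2^{k\gamma'})$, so $E^0[M_n]\le C\,2^{n\gamma'}$. Markov's inequality gives $P^0\{M_n>2^{n\gamma}\}\le C\,2^{n(\gamma'-\gamma)}$, which is summable, and the easy half of Borel--Cantelli yields $M_n\le 2^{n\gamma}$ eventually a.s. Letting $\gamma\downarrow\gamma_c$ shows $\overline{\mathrm{Dim}}_{\mathrm{M}}(\mathcal{R}_X)\le\gamma_c$ a.s.

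For the lower bound, I would apply a Paley--Zygmund argument to $M_n$. Decomposing $\{T_x<\infty,T_y<\infty\}$ by who is hit first and using the strong Markov property gives $P^0\{T_x<\infty,T_y<\infty\}\le g(0,0)^{-2}[g(0,x)g(x,y)+g(0,y)g(y,x)]$; summing over $x,y\in\cV_n$ and using that $\cV_n-x\subset\cV_{n+1}$ for $x\in\cV_n$ yields the clean second-moment bound $E^0[M_n^2]\le 2\,E^0[M_n]\,E^0[M_{n+1}]$. Paley--Zygmund then delivers $P^0\{M_n\ge\tfrac12 E^0[M_n]\}\ge E^0[M_n]/(8\,E^0[M_{n+1}])$, which is a useful positive quantity only along subsequences where $E^0[M_{n+1}]/E^0[M_n]$ remains bounded.

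The main obstacle is extracting an appropriate subsequence on which both quantities behave correctly. For $K>2^{\gamma_c}$, set $T_K:=\{n:E^0[M_{n+1}]\le K E^0[M_n]\}$. A gap of $T_K$ of length $L$ starting at $N$ would force $E^0[M_{N+L}]\ge K^L E^0[M_N]$; combined with the asymptotic bound $E^0[M_n]\le 2^{n(\gamma_c+\epsilon)}$ established above, this forces $L\le C N$, where $C=C(K,\epsilon)\to 0$ as $K\to\infty$. Since $\gamma_c=\limsup_n n^{-1}\log_2 E^0[M_n]$ there exists a sequence $m_k\to\infty$ with $E^0[M_{m_k}]\ge 2^{m_k\gamma'}$ for any fixed $\gamma'\in(\gamma,\gamma_c)$; by this bounded-gap property we can pick $n_k\in T_K\cap[m_k,(1+C)m_k]$, and monotonicity of $E^0[M_{\cdot}]$ then gives $E^0[M_{n_k}]\ge 2^{n_k\gamma'/(1+C)}$. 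For $\gamma<\gamma_c$ fixed we choose $K$ large enough that $\gamma'/(1+C)>\gamma$, so that $P^0\{M_{n_k}\ge 2^{n_k\gamma-1}\}\ge c>0$ along $\{n_k\}$. Reverse Fatou gives positive probability for $\{M_n\ge 2^{n\gamma-1}\text{ i.o.}\}$, which is a tail event (perturbing finitely many increments of $X$ changes $M_n$ by a bounded amount), so Kolmogorov's $0$--$1$ law upgrades this to probability one. Letting $\gamma\uparrow\gamma_c$ along a countable sequence finishes the proof.
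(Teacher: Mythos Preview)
Your overall strategy is sound and largely parallels the paper's: identify $\gamma_c$ with $\limsup_n n^{-1}\log_2 E^0[M_n]$, use Markov plus Borel--Cantelli for the upper bound, and Paley--Zygmund plus a zero--one law for the lower bound. The second-moment estimate $E^0[M_n^2]\le 2\,E^0[M_n]\,E^0[M_{n+1}]$ via $\cV_n-x\subset\cV_{n+1}$ is essentially what the paper derives in \eqref{E2}--\eqref{E3}.

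The substantive difference is how you control the ratio $E^0[M_{n+1}]/E^0[M_n]$. The paper proves a \emph{volume-doubling} estimate (Proposition~\ref{pr:VolDouble}): the shifted-ball inequality $\sup_{x}U(x+\cV_{n-1})\le U(\cV_n)$, obtained from the strong Markov property at the first entrance to $x+\cV_{n-1}$, gives $U(\cV_{n+1})\le 4^d\,U(\cV_n)$, i.e.\ $E^0[M_{n+1}]\le 4^d\,E^0[M_n]$ for \emph{every} $n$. This produces a uniform Paley--Zygmund lower bound along all $n$ with $E^0[M_n]\ge1$, and the conclusion follows at once. Your subsequence construction circumvents volume doubling by a purely sequence-theoretic bounded-gap argument, which is more elementary in that it uses no further random-walk input beyond the second-moment bound, at the price of more delicate bookkeeping. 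Both routes work; the paper's is shorter once Proposition~\ref{pr:VolDouble} is in hand, and volume doubling is of independent interest.

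One point needs tightening. Your justification of the zero--one law is not correct as stated: perturbing finitely many increments does \emph{not} change $M_n$ by a bounded amount, since it shifts the entire tail of the walk by a vector $v$, and $|\mathcal{R}_X\cap\cV_n|$ versus $|(\mathcal{R}_X+v)\cap\cV_n|$ can differ by as much as $O(2^{n(d-1)})$. What \emph{is} true is that for $n$ large enough one has $\cV_{n-1}\subset\cV_n-v\subset\cV_{n+1}$, hence $M_{n-1}-O(1)\le\tilde M_n\le M_{n+1}+O(1)$, which is enough to make $\limsup_n n^{-1}\log_2 M_n$ tail-measurable and Kolmogorov applicable. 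Cleaner still---and this is what the paper does---use Hewitt--Savage: under a finite \emph{permutation} of the increments, $X_k$ is literally unchanged for $k\ge N$, so $M_n$ really does change by at most $N$, and the event is exchangeable.
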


The proof hinges on the analysis of the \emph{0-potential measure}
\begin{equation}
	U(A) :=  \sum_{n=0}^\infty P^0\{X_n\in A\}=\sum_{x\in A} g(0\,,x),
\end{equation}
defined for all $A\subset\R^d$. Because $X$ is transient, the set function 
$U$ is a Radon Borel measure on $\R^d$. 
Since $g(x\,,y) = g(0\,,y-x)$ for all $x,y\in\Z^d$, it follows that for all $A\subset\R^d$,
\begin{equation}\label{R:U}
	E^0\left[ \card\left( \mathcal{R}_X\cap A\right)\right]
	=\sum_{x\in A} P^0\left\{ X_k=x\text{ for some $k\ge 0$}\right\}
	=\frac{U(A)}{g(0\,,0)},
\end{equation}
thanks to a combination of Tonelli's theorem and \eqref{HitPoints}.

The following simple argument proves immediately the first half of Theorem \ref{th:Dim_M}.

\begin{proof}[Proof of Theorem \ref{th:Dim_M}: Upper bound]
	We first prove that,  with probability one,
	\begin{equation}\label{eq:Dim_M:UB}
		\overline{\text{\rm Dim}}_{\rm M}(\mathcal{R}_X) \le 
		\gamma_c.
	\end{equation}
	The more involved converse bound
	will be proved later. 
	
	Chebyshev's inequality and \eqref{R:U}
	together show that for all real numbers
	$\gamma>0$ and integers $k\ge 1$,
	\begin{equation}\label{Boo1}
		P^0\left\{ \card(\mathcal{R}_X\cap \cS_k) \ge 2^{k\gamma}\right\}
		\le \frac{2^{-k\gamma}U(\cS_k)}{g(0\,,0)}.
	\end{equation}
	Because $g(x\,,y)\le g(0\,,0)<\infty$ for all $x,y\in\Z^d$---see
	\eqref{HitPoints}---%
	there exists a finite constant $b$ such that $U(\cS_k)\le b 2^{kd}$
	for all $k\ge 1$. Therefore, the sum over $k$ of the right-hand side
	of \eqref{Boo1} is always finite when $\gamma>d$.
	If $\gamma\in(0\,,d)$ is such that the right-hand side of
	\eqref{Boo1} forms a summable series [indexed 
	by $k$], then so does the left-hand side. 
	The Borel--Cantelli lemma shows
	that for any such value of $\gamma$ 
	the random variable $L_{\gamma}$ defined by 
	\begin{equation}
		L_{\gamma} := \sup_{k \in \N} \left\{ 
		\frac{\card(\mathcal{R}_X\cap\cS_k)}{2^{k\gamma}}\right\}.
	\end{equation}
	is a.s.\ finite. In particular,
	\begin{equation}\label{eq:cb}
		\card(\mathcal{R}_X\cap\cV_k) \le \card(\cV_0)+
		L_\gamma\sum_{j=1}^k 2^{j\gamma} \le  
		2^{\gamma} (L_{\gamma}\vee 4^d)2^{k\gamma},
	\end{equation}
	for all $k\ge 1$. This proves that
	\begin{equation}
		\limsup_{n\to\infty} n^{-1} \log_2(\card\left(\mathcal{R}_X
		\cap\cV_n\right)) \le\gamma\qquad\text{a.s.,}
	\end{equation}
	whence
	$\overline{\text{Dim}}_{\rm M}(\mathcal{R}_X)\le\gamma$ a.s.\ for such
	a $\gamma$. Optimize over all such $\gamma$'s in order to see that
	\begin{equation}\label{eq:Dim:UM}
		\overline{\text{Dim}}_{\rm M}(\mathcal{R}_X)\le\inf\Big\{
		\gamma\in(0\,,d):\ \sum_{k=1}^\infty 2^{-k\gamma}U(\cS_k)<\infty\Big\},
	\end{equation}
	where $\inf\varnothing:=d$. To finish, note that
	if $x\in\cS_k$ then $\|x\| \ge 2^{k-1}$, whence
	\begin{equation}\label{U>g}\begin{split}
		\sum_{k=1}^\infty 2^{-k\gamma} U(\cS_k) &= 
			\sum_{k=1}^\infty 2^{-k\gamma}\sum_{x\in \cS_k} g(0\,,x)\\
		&\ge 2^{-\gamma}\sum_{k=1}^\infty \|x\|^{-\gamma}
			\sum_{x\in \cS_k} g(0\,,x)\\
		&= 2^{-\gamma} \sum_{x\in\Z^d\setminus\cV_0}
			\frac{g(0\,,x)}{\|x\|^\gamma}.
	\end{split}\end{equation}
	This and \eqref{eq:Dim:UM} together imply \eqref{eq:Dim_M:UB}.
\end{proof}

For the remaining, more challenging, direction of Theorem \ref{th:Dim_M}
we need to know that the measure $U$ is
\emph{volume-doubling}. That is the gist of the following result.

\begin{proposition}\label{pr:VolDouble}
	$U(\cV_{n+1}) \le 4^dU(\cV_n)$ for all $n\ge 0$.
\end{proposition}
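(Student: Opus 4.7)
The plan is to decompose $\cV_{n+1}$ into $2^d$ translated copies of $\cV_n$, and to use the maximum principle for random walk potentials to bound each translate's contribution by $2^d U(\cV_n)$, for a total factor of $4^d$. Specifically, I would begin by writing $\cV_{n+1}=\bigsqcup_{j\in\{0,1\}^d}(\cV_n+a_j)$, where the shift vectors $a_j\in\{-2^n,2^n\}^d$ identify the $2^d$ dyadic subcubes of sidelength $2^{n+1}$ inside $\cV_{n+1}$. Using the translation invariance $g(x,y)=g(0,y-x)$,
\begin{equation*}
U(\cV_n+a_j)=\sum_{y\in\cV_n+a_j}g(0,y)=\sum_{w\in\cV_n}g(-a_j,w)=U^{-a_j}(\cV_n),
\end{equation*}
where I write $U^z(A):=\sum_{y\in A}g(z,y)$. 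Summing,
\begin{equation*}
U(\cV_{n+1})=\sum_j U^{-a_j}(\cV_n)\le 2^d\sup_{z\in\Z^d}U^z(\cV_n).
\end{equation*}

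The crux is then the pointwise bound $\sup_z U^z(\cV_n)\le 2^d U(\cV_n)$. The function $V(z):=U^z(\cV_n)$ is nonnegative, and the usual first-step identity $g(z,y)=\mathbf{1}\{z=y\}+\sum_x p(x)g(z+x,y)$ rearranges to $\sum_x p(x)V(z+x)=V(z)-\mathbf{1}_{\cV_n}(z)$. Thus $V$ is $L$-harmonic off $\cV_n$ and $L$-superharmonic on $\cV_n$, and by the maximum principle for nonnegative superharmonic functions the supremum of $V$ is attained at some $z^*\in\cV_n$. For $z\in\cV_n$, translation invariance again gives $V(z)=U(\cV_n-z)$, where $\cV_n-z$ is a translate of $\cV_n$ that contains the origin and lies inside $\cV_{n+1}$.

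What remains is to show that $U(\cV_n-z)\le 2^d U(\cV_n)$ uniformly over $z\in\cV_n$. My proposed tool is a covering argument: for each coordinate $k$, depending on $\operatorname{sign}(z_k)$, either $0$ or $-\operatorname{sign}(z_k)\cdot 2^{n+1}$ serves as a shift that together with the zero shift covers the $k$-th projection of $\cV_n-z$; taking the $2^d$ product combinations yields a family of $2^d$ translates $\{\cV_n+b\}_{b\in B}$ whose union contains $\cV_n-z$, and crucially contains $\cV_n+\mathbf{0}=\cV_n$ itself. Bounding each translate $U(\cV_n+b)=U^{-b}(\cV_n)$ by the same supremum just controlled, and singling out the $\cV_n$ term, should close the argument.

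The main obstacle is precisely this last step: a naive application of the covering gives the tautological inequality $\sup_w U^w(\cV_n)\le 2^d\sup_w U^w(\cV_n)$, because the extreme translates $\cV_n+b$ for $b\neq 0$ can themselves have potential as large as $U^{z^*}(\cV_n)$. Resolving this loop requires exploiting that $(\cV_n-z)\setminus\cV_n\subseteq\cS_{n+1}$, so the ``new'' mass in a translate is entirely in the shell, together with a strong Markov first-exit argument from $\cV_n$ (or, equivalently, a last-exit decomposition) to produce the missing factor. This Markov decomposition, which quantifies how much extra potential a shifted cube picks up beyond what is already in $\cV_n$, is the heart of the proof and will produce the claimed $4^d$.
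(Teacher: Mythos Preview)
Your ingredients are right---a covering of $\cV_{n+1}$ by translated cubes, together with a strong Markov/maximum principle bound on the potential of a translated cube---but they are assembled at the wrong scale, and the circularity you flag is genuine and is not resolved by the shell/first-exit heuristic you sketch at the end. The quantity you need is $\sup_z U^z(\cV_n)\le 2^d U(\cV_n)$, and there is no clean way to get this: your covering of $\cV_n-z$ by $2^d$ translates of $\cV_n$ feeds back into the same supremum, and the ``new mass lies in $\cS_{n+1}$'' observation does not help because $U(\cS_{n+1})$ is precisely the part of $U(\cV_{n+1})$ you are trying to control.

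The fix, and the paper's argument, is to drop one dyadic level. Cover $\cV_{n+1}$ by $4^d$ translates of $\cV_{n-1}$ rather than $2^d$ translates of $\cV_n$. Then the target becomes $\sup_{x}U(x+\cV_{n-1})\le U(\cV_n)$, with no extra constant needed, and this follows from exactly your strong Markov/maximum principle step: at the first entry time $\tau$ of $X$ into $x+\cV_{n-1}$ one has $X_\tau\in x+\cV_{n-1}$, so the recentered cube satisfies $(x+\cV_{n-1})-X_\tau\subset\cV_{n-1}-\cV_{n-1}=\cV_n$, whence $U(x+\cV_{n-1})=E^0[U((x+\cV_{n-1})-X_\tau);\tau<\infty]\le U(\cV_n)$. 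Equivalently, in your language, the supremum of $V(z)=U^z(\cV_{n-1})$ is attained at some $z^*\in\cV_{n-1}$, and then $V(z^*)=U(\cV_{n-1}-z^*)\le U(\cV_n)$ because $\cV_{n-1}-z^*\subset\cV_n$ for $z^*\in\cV_{n-1}$. The loop disappears because the containing cube $\cV_n$ is now strictly larger than the translated cube $\cV_{n-1}$.
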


This is a volume-doubling result because $\cV_n=2\cV_{n-1}$.
See Khoshnevisan and Xiao \cite{KX2003} 
for a corresponding result for L\'evy processes on $\R^d$.

\begin{proof}
	The proposition holds trivially when $n=0$. Therefore, we will concentrate
	on the case $n\ge 1$ from now on.
	
	We begin with a familiar series of random-walk computations.
	Choose and fix  an integer  $n\ge 1$ and some $x\in\Z^d$. Then, we
	apply the strong Markov property at 
	$\tau:=\inf\{k\ge0:\, X_k\in x+\cV_{n-1}\}$ $[\inf\varnothing:=+\infty]$
	in order to see that
	\begin{equation}
		U(x+\cV_{n-1}) = E^0\left[U\left(-X_\tau+x+\cV_{n-1}\right)
		;\, \tau<\infty\right].
	\end{equation}
	Since $-X_\tau+x\in-\cV_{n-1}$ $P^0$-a.s.\ on $\{\tau<\infty\}$, and
	$-\cV_{n-1}+\cV_{n-1}=\cV_n$, this readily yields the ``shifted-ball
	inequality,''
	\begin{equation}\label{SBI}
		\sup_{x\in\Z^d} U(x+\cV_{n-1})\le U(\cV_n)\qquad\text{for
		all $n\ge 1$}.
	\end{equation}
	Eq.\ \eqref{SBI} becomes obvious if ``$\sup_{x\in\Z^d}$'' were replaced by
	``$\sup_{x\in\cV_{n-1}}$.'' The strong Markov property of $X$ was 
	needed in order to establish this  improvement.
		
	Armed with \eqref{SBI} we proceed in a standard way:
	We can always find $4^d$ integer 
	points $x_1,\ldots,x_{4^d}\in\Z^d$ such that
	\begin{equation}
		\cV_{n+1} = \bigcup_{j=1}^{4^d} \left( x_j + \cV_{n-1}\right),
	\end{equation}
	for all $n\ge 1$, where the union is a disjoint one. Thus,
	\begin{equation}
		U(\cV_{n+1}) = \sum_{j=1}^{4^d} U(x_j+\cV_{n-1})
		\le 4^d\sup_{x\in\Z^d}U(x+\cV_{n-1}).
	\end{equation}
	The proposition follows from this and \eqref{SBI}.
\end{proof}

Next we rewrite $\gamma_c$---see \eqref{eq:gamma_c}---in a
slightly different form. We will be ready to complete the 
proof of Theorem \ref{th:Dim_M} once that task is done.

\begin{proposition}\label{pr:gamma_c}
	$\gamma_c = \limsup_{n\to\infty} n^{-1}\log_2 U(\cV_n)$.
\end{proposition}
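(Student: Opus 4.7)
The plan is to establish the identity by dissecting the sum defining $\gamma_c$ into dyadic shells and then comparing shell-masses with cumulative ball-masses via an elementary summation argument.

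First I would rewrite $\gamma_c$ in terms of shell masses of $U$. Decompose $\Z^d\setminus\{0\}$ as the disjoint union $\cup_{k\ge 1}\cS_k$ together with $\cV_0\setminus\{0\}$ (which contributes only a finite sum). Since $\cS_k\subseteq\cV_k\setminus\cV_{k-1}$, every $x\in\cS_k$ satisfies $\|x\|\asymp 2^k$ uniformly in $x$, with constants depending only on $d$ and the choice of norm. Therefore
\begin{equation}
\sum_{x\in\Z^d\setminus\{0\}}\frac{g(0,x)}{\|x\|^\gamma}\asymp \sum_{k=1}^\infty 2^{-k\gamma}\sum_{x\in\cS_k}g(0,x)=\sum_{k=1}^\infty 2^{-k\gamma}U(\cS_k).
\end{equation}
Hence $\gamma_c$ equals the infimum of those $\gamma\in(0,d)$ for which the series $\sum_{k\ge 1}2^{-k\gamma}U(\cS_k)$ converges, with the usual convention $\inf\varnothing:=d$.

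Let $\beta:=\limsup_{n\to\infty}n^{-1}\log_2 U(\cV_n)$. For $\beta\le\gamma_c$, fix $\gamma\in(\gamma_c,d)$. Then $\sum_{k=0}^\infty 2^{-k\gamma}U(\cS_k)<\infty$ (noting $U(\cS_0)=U(\cV_0)$ is finite since the walk is transient), and the trivial estimate
\begin{equation}
U(\cV_n)=\sum_{k=0}^n U(\cS_k)=\sum_{k=0}^n 2^{k\gamma}\bigl(2^{-k\gamma}U(\cS_k)\bigr)\le 2^{n\gamma}\sum_{k=0}^\infty 2^{-k\gamma}U(\cS_k)
\end{equation}
shows $U(\cV_n)=O(2^{n\gamma})$, so $\beta\le\gamma$. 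Letting $\gamma\downarrow\gamma_c$ gives $\beta\le\gamma_c$ (the case $\gamma_c=d$ is automatic since $U(\cV_n)\le g(0,0)\cdot 2^{(n+1)d}$ forces $\beta\le d$).

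For the converse $\gamma_c\le\beta$, I may assume $\beta<d$, else the inequality is trivial. Pick any $\gamma\in(\beta,d)$ and then $\gamma'\in(\beta,\gamma)$. By definition of $\limsup$, $U(\cV_n)\le 2^{n\gamma'}$ for all $n$ sufficiently large. Since $\cS_n\subseteq\cV_n$, the same bound holds for $U(\cS_n)$, hence
\begin{equation}
\sum_{n=1}^\infty 2^{-n\gamma}U(\cS_n)\le \text{(finite terms)}+\sum_{n\ge n_0}2^{-n(\gamma-\gamma')}<\infty,
\end{equation}
so $\gamma_c\le\gamma$. Letting $\gamma\downarrow\beta$ yields $\gamma_c\le\beta$, and combining with the previous bound completes the proof.

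There is no serious obstacle here: the argument is a standard Abel-type comparison between a dyadically-weighted shell sum and the ball-volumes that make it up, plus the trivial monotonicity $U(\cS_n)\le U(\cV_n)$. Notably, the volume-doubling estimate of Proposition \ref{pr:VolDouble} is \emph{not} needed for this proposition; it will be called upon only in the matching lower bound for $\overline{\mathrm{Dim}}_{\mathrm M}(\mathcal R_X)$ in Theorem \ref{th:Dim_M}. The only minor care required is bookkeeping with the convention $\inf\varnothing:=d$ and the boundary case $\beta=d$ or $\gamma_c=d$.
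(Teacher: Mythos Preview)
Your proof is correct and follows essentially the same route as the paper: both first rewrite $\gamma_c$ as the abscissa of convergence of $\sum_k 2^{-k\gamma}U(\cS_k)$ via the shell decomposition, and then compare this with $\limsup_n n^{-1}\log_2 U(\cV_n)$ using the partial-sum bound $U(\cV_n)=\sum_{k\le n}U(\cS_k)\le 2^{n\gamma}\sum_k 2^{-k\gamma}U(\cS_k)$ in one direction and the trivial inclusion $U(\cS_n)\le U(\cV_n)$ in the other. The only cosmetic difference is that the paper pauses at the intermediate identity $\gamma_c=\limsup_n n^{-1}\log_2 U(\cS_n)$ before passing to balls, whereas you collapse those two steps into one; your observation that Proposition~\ref{pr:VolDouble} is not needed here is also correct.
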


\begin{proof}
	If $x\in\cS_k$, then $\|x\|\le d^{1/2} 2^k$. Therefore,
	\begin{equation}\label{U<g}
		\sum_{k=1}^\infty 2^{-k\gamma} U(\cS_k) 
		\le  d^{\gamma/2}\sum_{x\in\Z^d\setminus\cV_0}
		\frac{g(0\,,x)}{\|x\|^\gamma}.
	\end{equation}
	Therefore, we can infer from \eqref{U>g} that
	\begin{equation}\label{eq:gamma_c:1}
		\gamma_c = \inf \bigg\{
		\gamma\in(0\,,d):\ \sum_{k=1}^\infty
		2^{-k\gamma} U(\cS_k)<\infty \bigg\}.
	\end{equation}
	
	We apply \eqref{eq:gamma_c:1} to rewrite $\gamma_c$ once more time:
	If $\gamma>\gamma_c$, then $U(\cS_k)=o(2^{k\gamma})$ as $k\to\infty$.
	If on the other hand $\gamma\in(0\,,\gamma_c)$, then 
	we can argue by contraposition to see that, for every fixed $\epsilon>0$,
	$U(\cS_k)> 2^{k(\gamma-\epsilon)}$ for infinitely-many integers $k$. This means
	that
	\begin{equation}\label{eq:gamma_c:2}
		\gamma_c = \limsup_{n\to\infty} n^{-1}\log_2 U(\cS_n).
	\end{equation}
	Now we prove the proposition.
	
	The assertion of the proposition is that $\gamma_c=\gamma_c'$, where
	\begin{equation}
		\gamma_c' := \limsup_{n\to\infty} n^{-1}\log_2 U(\cV_n).
	\end{equation}
	On one hand,  \eqref{eq:gamma_c:2} implies that $\gamma_c\le\gamma_c'.$
	If, on the other hand, $\vartheta>\gamma_c$ is an arbitrary finite number, then
	there exists a finite constant $L_\vartheta$ such that $U(\cS_k)\le L_\vartheta 2^{k\vartheta}$
	for all integers $k\ge 1$. In particular,
	\begin{equation}
		U(\cV_n) \le \#(\cV_0)+L_\vartheta \sum_{k=1}^n 2^{k\vartheta}
		\qquad\text{for all $n\ge 1$},
	\end{equation}
	whence follows that $U(\cV_n)=O(2^{n\vartheta})$ as $n\to\infty$.
	Since this is true for all $\vartheta>\gamma_c$, it follows that
	$\gamma_c'\le\gamma_c$, as was promised.
\end{proof}

\begin{proof}[Proof of Theorem \ref{th:Dim_M}: Lower bound]
	It remains to prove that
	\begin{equation}
		\overline{\text{\rm Dim}}_{\rm M}(\mathcal{R}_X) \ge \gamma_c
		\qquad\text{a.s.,}
	\end{equation}
	where $\gamma_c$ was defined in \eqref{eq:gamma_c}. If $\gamma_c=0$, then
	we are done. Therefore, from now on we assume without loss of generality that
	\begin{equation}\label{lolo}
		\gamma_c>0.
	\end{equation}
	
	Define 
	\begin{equation}
		\tau(x) := \inf\left\{ n\ge 0:\, X_n=x\right\},
	\end{equation}
	for all $x\in\Z^d$ [$\inf\varnothing:=+\infty$].
	Since $\card(\mathcal{R}_X\cap A)=\#\{x\in A:\, \tau(x)<\infty\}$,
	Tonelli's theorem implies that
	\begin{equation}\label{E2}\begin{split}
		&E^0\left(\left|\card\left( \mathcal{R}_X\cap\cV_n\right)\right|^2\right)\\
		&=E^0\left[\card\left(\mathcal{R}_X\cap\cV_n\right)\right] +
			2\mathop{\sum\sum}\limits_{\substack{x,y\in\cV_n\\x\neq y}}
			P^0\left\{ \tau(x)<\tau(y)<\infty\right\}\\
		&= E^0\left[\card\left(\mathcal{R}_X\cap\cV_n\right)\right]+ 2
			\mathop{\sum\sum}\limits_{\substack{x,y\in\cV_n\\x\neq y}}
			P^0\{ \tau(x)<\infty\}P^x\{\tau(y)<\infty\},
	\end{split}\end{equation}
	thanks to the strong Markov property. If $x\in\cV_n$ and $n\ge 1$ are
	held fixed, then
	\begin{equation}
		\sum_{y\in\cV_n\setminus\{x\}} P^x\{\tau(y)<\infty\}
		= \frac{U(\cV_n-x)}{g(0\,,0)}\le \frac{U(\cV_{n+1})}{g(0\,,0)},
	\end{equation}
	since $\cV_n-x\subset\cV_n-\cV_n=\cV_{n+1}$. Therefore, \eqref{E2}
	implies that
	\begin{equation}\label{E3}\begin{split}
		&E^0\left(\left|\card\left( \mathcal{R}_X\cap\cV_n\right)\right|^2\right)
			\le E^0\left[\card\left(\mathcal{R}_X\cap\cV_n\right)\right]
			+ \frac{2U(\cV_n)U(\cV_{n+1})}{[g(0\,,0)]^2}\\
		&\qquad\le E^0\left[\card\left(\mathcal{R}_X\cap\cV_n\right)\right]
			+ 2^{1+2d}\left\{E^0\left[\card
			\left(\mathcal{R}_X\cap\cV_n\right)\right]\right\}^2,
	\end{split}\end{equation}
	thanks to \eqref{R:U} and Proposition \ref{pr:VolDouble}. Because
	\begin{equation}
		\E[\card(\mathcal{R}_X\cap\cV_n)]= \frac{U(\cV_n)}{g(0\,,0)},
	\end{equation}
	Eq.\ \eqref{E3} and the
	Paley--Zygmund inequality together yield the following: For 
	infinitely many $n$,
	\begin{equation}\begin{split}
		P^0\left\{ \card\left( \mathcal{R}_X\cap\cV_n\right)
			> \frac{U(\cV_n)}{2g(0\,,0)} \right\}
		&\ge \frac{\left\{E^0\left[\card
			\left(\mathcal{R}_X\cap\cV_n\right)\right]\right\}^2
			}{4E^0\left(\left|\card\left( \mathcal{R}_X\cap\cV_n\right)\right|^2\right)}\\
		&\ge \frac{1}{4(1+2^{1+d})}\\
		&:= \varrho(d).
	\end{split}\end{equation}
	The last part holds since \eqref{lolo}
	implies that $\E[\card(\mathcal{R}_X\cap\cV_n)]\ge1$ for 
	infinitely-many integers $n>1$. The preceding displayed inequality
	and Proposition \ref{pr:gamma_c} together imply that
	$\overline{\text{\rm Dim}}_{\rm M}(\mathcal{R}_X) \ge\gamma_c$,
	with probability at least $\varrho(d)>0$. 
	Since $\overline{\text{\rm Dim}}_{\rm M}(\mathcal{R}_X) =
	\overline{\text{\rm Dim}}_{\rm M}(\mathcal{R}_X\cap \cV_N^c)$ for all $N\ge1$,
	an application of the Hewitt--Savage 0--1 law shows the desired result that
	$\overline{\text{\rm Dim}}_{\rm M}(\mathcal{R}_X)\ge\gamma_c$ almost surely.
\end{proof}

\section{Concluding remarks and open problems}\label{sec:conj}
Corollary \ref{cor:main} succeeds in yielding a
formula for $\Dim(\mathcal{R}_X\cap F)$ for every recurrent set $F\subset\Z^d$,
though it is difficult to work with that formula. We do not expect a simple formula for $\Dim(\mathcal{R}_X\cap F)$ 
when $F$ is a general recurrent set in $\Z^d$.
In fact, it is not even easy to decide whether or not a general set $F$ is recurrent,
as we have seen already. However, one can hope for simpler descriptions of
$\Dim(\mathcal{R}_X\cap F)$ when $F=\Z^d$.
 In this section we conclude with a series of remarks, problems, and conjectures
 that have these comments in mind.

Question \eqref{BT:question} was in part motivated by its ``local variation,'' which
had  been open since the mid-to-late 1960's \cite{Pruitt},
and possibly earlier. Namely, let
$\{y(t)\}_{t\ge0}$ be a L\'evy process in $\R^d$. 
The local version of \eqref{BT:question} asks, ``what
is the ordinary Hausdorff dimension $\dim_{\rm H}$ of the range 
$y(\R_+):=\cup_{t\ge0}\{y(t)\}$?''
This question was answered several years later 
by Khoshnevisan, Xiao, and Zhong \cite[Corollary 1.8]{KXZ2003}, who showed
among other things that $\dim_{\rm H}(y(\R_+))$ is a.s.\ equal 
to an index that was introduced earlier in Pruitt \cite{Pruitt}
as part of the solution to the very same question. Under a quite mild
regularity condition, it has been shown that the general formula for
$\dim_{\rm H}(y(\R_+))$ reduces to the following \cite[(1.19) of Theorem 
1.5]{KX2009}:
\begin{equation}\label{dimh}
	\dim_{\rm H}(y(\R_+)) = \sup\left\{\gamma\in(0\,,d):\
	\int_{\R^d} \frac{u(x)}{\|x\|^\gamma}\,\d x<\infty\right\}
	\quad\text{a.s.},
\end{equation}
where $u$ denotes the 1-potential kernel of $y$. 
Khoshnevisan and Xiao \cite[\textcolor{red}{(1.4)}]{KX2008}
find an alternative Fourier-analytic formula. 

If we proceed purely
by analogy, then we might guess from \eqref{dimh} and \eqref{eq:gamma_c}
the following formula for the macroscopic
Hausdorff dimension of the range $\mathcal{R}_X$ of our random walk $X$
on $\Z^d$: 
\begin{equation}\label{conj}
	\Dim(\mathcal{R}_X) = \gamma_c
	\qquad\text{a.s.}
\end{equation}
In principle, we ought to be able to decide whether or not \eqref{conj}
is correct, based solely on Corollary \ref{cor:main}. But we do not know how
to do that at this time mainly because it is quite difficult to compute
$p_c(\Z^d\,;0)$ when $X$ is a general transient
random walk. Instead, we are able to only offer
\begin{CONJ}	\label{OP1}
	$\Dim(\mathcal{R}_X)=\gamma_c$ a.s.\
	for every transient random walk on $\Z^d$,
	where $\gamma_c$ was defined in \eqref{eq:gamma_c}.
\end{CONJ}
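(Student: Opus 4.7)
The plan is to combine Theorem \ref{th:Dim_M} with the fractal-percolation machinery developed in \S\S\ref{sec:fp}--\ref{sec:mc}. One half of the conjecture is immediate: since macroscopic Hausdorff dimension never exceeds macroscopic upper Minkowski dimension \cite{BT1992}, Theorem \ref{th:Dim_M} gives
\begin{equation*}
\Dim(\mathcal{R}_X)\le\overline{\text{Dim}}_{\rm M}(\mathcal{R}_X)=\gamma_c\qquad\text{a.s.}
\end{equation*}
The content of the conjecture is therefore the matching lower bound $\Dim(\mathcal{R}_X)\ge\gamma_c$ a.s.

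For the lower bound, Theorem \ref{th:Dim:Pi} (together with Remark \ref{rem:Dim:Pi}) reduces the task to proving that $\mathcal{R}_X\Recurrent\Pi_p$ a.s.\ $[\mathbb{P}^0]$ for every $p\in(2^{-\gamma_c}\,,1)$. The natural starting point is the first-moment identity
\begin{equation*}
\mathbb{E}^0\bigl[\card(\mathcal{R}_X\cap\Pi_p\cap\cS_k)\bigr]
=\frac{p^{k+1}\,U(\cS_k)}{g(0\,,0)},
\end{equation*}
which follows from the independence of $X$ and $\Pi_p$ together with Lemma \ref{lem:perc:hit} and \eqref{HitPoints}. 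The alternative representation $\gamma_c=\inf\{\gamma\in(0\,,d):\sum_{k}2^{-k\gamma}U(\cS_k)<\infty\}$ obtained along the way in the proof of Proposition \ref{pr:gamma_c} shows that this first moment summed over $k$ diverges as soon as $p>2^{-\gamma_c}$. I would then run the Paley--Zygmund/second-moment scheme underlying the proof of Corollary \ref{cor:RW:Dim} on the counter $\tau_N:=\sum_{k=0}^N\1\{\mathcal{R}_X\cap\Pi_p\cap\cS_k\neq\varnothing\}$: the divergence of $\mathbb{E}^0\tau_N$ combined with an $O((\mathbb{E}^0\tau_N)^2)$ bound on $\mathbb{E}^0[\tau_N^2]$ would yield $\tau_\infty=\infty$ with positive probability, which by the Hewitt--Savage $0$--$1$ law upgrades to almost-sure recurrence, and finally Theorem \ref{th:Dim:Pi} delivers $\Dim(\mathcal{R}_X)\ge -\log_2 p$ a.s.; optimizing over $p>2^{-\gamma_c}$ gives the desired bound.

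The main obstacle is the second-moment step. Writing $A_k:=\{\mathcal{R}_X\cap\Pi_p\cap\cS_k\neq\varnothing\}$ and applying the strong Markov property of $X$ at its first visit to $\cS_j$ yields, for $j<k$,
\begin{equation*}
\mathbb{P}^0(A_j\cap A_k)\le \mathbb{P}^0(A_j)\cdot\sup_{z\in\cS_j\cap\Z^d}\mathbb{P}^z(A_k),
\end{equation*}
and by Theorem \ref{th:RW:FP} the sought-for bound $\mathbb{E}^0[\tau_N^2]=O(\mathbb{E}^0[\tau_N]^2)$ amounts to the uniform comparison
\begin{equation*}
\sup_{z\in\cS_j\cap\Z^d} c_p(\cS_k\,;z)\le K\, c_p(\cS_k\,;0)
\end{equation*}
for all sufficiently separated pairs $(j\,,k)$. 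Under a Lamperti-type regularity hypothesis such as \eqref{cond:Lamperti} this comparison is immediate --- indeed, it is exactly how Corollary \ref{cor:RW:Dim} is proved --- but for a general transient walk the ratio $g(x\,,y)/g(0\,,y)$ can oscillate wildly across distant shells, and no universal estimate of the above form is currently available. Settling Conjecture \ref{OP1} therefore appears to require either a new potential-theoretic substitute for \eqref{cond:Lamperti} that is valid without any regularity assumption on $g$, or an altogether different route to the lower bound that bypasses pointwise Green's-function comparisons --- for instance, by exploiting the volume-doubling property of $U$ (Proposition \ref{pr:VolDouble}) to construct equilibrium-type test measures on each shell whose energies can be controlled solely in terms of $U(\cS_k)$.
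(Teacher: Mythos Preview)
Your assessment is accurate, and it is important to note that the paper does \emph{not} prove this statement: it is presented explicitly as an open conjecture in \S\ref{sec:conj}, with the authors writing that they ``do not know how to do that at this time mainly because it is quite difficult to compute $p_c(\Z^d\,;0)$ when $X$ is a general transient random walk.'' So there is no proof in the paper to compare against.

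That said, your analysis of \emph{why} the conjecture is open matches the paper's implicit reasoning very well. The upper bound $\Dim(\mathcal{R}_X)\le\gamma_c$ is indeed immediate from Theorem \ref{th:Dim_M} and the general inequality $\Dim\le\overline{\rm Dim}_{\rm M}$; the paper makes this connection explicitly in the sentence following the conjecture. For the lower bound, you have correctly identified the crux: the second-moment argument in Corollary \ref{cor:RW:Dim} hinges on the comparison $\sup_{z\in\cS_j}c_p(\cS_k;z)\le K\,c_p(\cS_k;a)$, which is exactly what the Lamperti condition \eqref{cond:Lamperti} supplies and which fails for general walks. Your diagnosis that the ratio $g(x,y)/g(0,y)$ can oscillate across shells is the right obstruction.

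Your closing suggestions --- seeking a potential-theoretic substitute for \eqref{cond:Lamperti}, or leveraging the volume-doubling of $U$ from Proposition \ref{pr:VolDouble} --- are reasonable directions. The paper itself proposes a different route via Conjecture \ref{OP3} on additive random walks, remarking that one can ``adapt the arguments of \cite{KXZ2003} in order to derive Conjecture \ref{OP1} from Conjecture \ref{OP3}.'' You might find that multiparameter approach worth examining as an alternative to the direct second-moment scheme you outline.
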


Because of Theorem \ref{th:Dim_M},
Conjecture \ref{OP1} is equivalent to the assertion that
$\Dim(\mathcal{R}_X)=\overline{{\rm Dim}}_{\rm M}(\mathcal{R}_X)$ a.s.
It is known that the ordinary [microscopic] 
Hausdorff dimension of the range of a L\'evy
process is always equal to its ordinary  [microscopic]  
\emph{lower} Minkowski dimension,
and not always the upper Minkowski dimension. If
Conjecture \ref{OP1} were correct, then it would suggest that
large-scale dimension theory of random walks is somewhat different 
from its small-scale counterpart. Our next Problem is an attempt to understand
this difference better.

Barlow and Taylor \cite{BT1989,BT1992} have introduced two other notions of
macroscopic dimension that are  related to our present interests. 
Namely, they define the [macroscopic] \emph{lower
Minkowski dimension} of $A\subset\Z^d$ 
and the \emph{lower Hausdorff dimension} of $A\subset\Z^d$ respectively as\footnote{%
Barlow and Taylor write ${\rm dim}_{\rm LM}$ and $\dim_{\rm L}$
in place of our $\underline{{\rm Dim}}_{\rm M}$ and $\underline{{\rm Dim}}_{\rm H}$.}
\begin{equation}\begin{split}
	\underline{{\rm Dim}}_{\rm M} (A) &:= \liminf_{n\to\infty}
		n^{-1}\log \card(A\cap\cV_n),\\
	\underline{{\rm Dim}}_{\rm H} (A) &:= \inf\left\{\alpha>0:
		\lim_{k\to\infty}\mathcal{N}_\alpha(A\,,\cS_k)=0\right\}.
\end{split}\end{equation}
One has $\underline{\rm Dim}_{\rm H}(A)\le \Dim(A)$ and 
$\underline{{\rm Dim}}_{\rm M}(A)\le \overline{\rm Dim}_{\rm M}(A)$ 
for all $A\subseteq\Z^d$.

It is easy to obtain a nontrivial upper bound for the lower Minkowski dimension
of $\mathcal{R}_X$, valid for every transient random walk $X$ on $\Z^d$.
Namely, by \eqref{R:U} and Fatou's lemma,
\begin{equation}
	\E\left[\liminf_{n\to\infty} 
	2^{-n\gamma}\card(\mathcal{R}_X\cap\cV_n) \right]
	\le \liminf_{n\to\infty} 2^{-n\gamma} U(\cV_n),
\end{equation}
for every $\gamma\in[0\,,\infty)$.
From this we readily can deduce that
\begin{equation}\label{ubub}
	\underline{\rm Dim}_{\rm M}(\mathcal{R}_X) \le \liminf_{n\to\infty}
	n^{-1}\log U(\cV_n)\qquad\text{a.s.}
\end{equation}
We believe that this is a sharp bound, and thus propose the following.
\begin{CONJ}\label{OP2}
	With probability one,
	\begin{equation}
		\underline{{\rm Dim}}_{\rm H}(\mathcal{R}_X)
		=\underline{\rm Dim}_{\rm M}(\mathcal{R}_X) = \liminf_{n\to\infty}
		n^{-1}\log U(\cV_n).
	\end{equation}
\end{CONJ}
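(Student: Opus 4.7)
The plan is to establish the sandwich
\begin{equation*}
\liminf_{n\to\infty} n^{-1}\log U(\cV_n) \le \underline{{\rm Dim}}_{\rm H}(\mathcal{R}_X) \le \underline{{\rm Dim}}_{\rm M}(\mathcal{R}_X) \le \liminf_{n\to\infty} n^{-1}\log U(\cV_n)
\end{equation*}
$P^0$-a.s. The rightmost inequality is precisely \eqref{ubub}. The middle inequality is \emph{not} a general set-theoretic fact, and must be earned for the specific random set $\mathcal{R}_X$; this is the main obstacle and is discussed last. I would first attack the leftmost inequality, since it admits a cleaner Frostman-type strategy.

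Fix $\beta < \beta^* := \liminf_n n^{-1}\log U(\cV_n)$ and $\alpha < \beta$. Consider the random potential-weighted counting measure
\begin{equation*}
\mu_k(A) := \frac{1}{U(\cS_k)} \sum_{x \in A \cap \cS_k} g(x\,,x)\,\1_{\{x \in \mathcal{R}_X\}}.
\end{equation*}
Using $P^0\{x \in \mathcal{R}_X\} = g(0\,,x)/g(x\,,x)$ from \eqref{HitPoints} one computes $\mathbb{E}^0[\mu_k(\cS_k)] = 1$, and a second-moment argument modeled on \eqref{E3} (together with volume doubling, Proposition \ref{pr:VolDouble}) yields a Paley--Zygmund lower bound $\mathbb{P}^0\{\mu_k(\cS_k) \ge \tfrac12\} \ge \varrho > 0$ along a deterministic subsequence $k_j\to\infty$ realizing $U(\cV_{k_j}) \ge 2^{k_j\beta}$. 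A macroscopic Frostman lemma (adapted from Theorem 4.2 of \cite{BT1992}, as in the proof of Theorem \ref{th:Dim:Pi}) converts an additional uniform-in-$Q$ upper bound $\mu_{k_j}(Q) \le C \cdot 2^{\alpha(\ell - k_j - 1)}$ on dyadic cubes $Q$ of sidelength $2^\ell \ge 1$ into the lower bound $\mathcal{N}_\alpha(\mathcal{R}_X\,,\cS_{k_j}) \ge 1/(2C)$. In expectation, this Frostman condition reduces, via the shifted-ball inequality \eqref{SBI}, to a growth bound on $U(\cV_\ell)$; it then must be lifted to a uniform almost-sure bound via a union bound and chaining argument over dyadic cubes. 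Reverse Fatou and the Hewitt--Savage 0-1 law (as at the end of the proof of Theorem \ref{th:Dim_M}) finally upgrade the positive-probability statement $\limsup_k \mathcal{N}_\alpha(\mathcal{R}_X\,,\cS_k) \ge 1/(2C)$ to $P^0$-probability one.

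The principal obstacle is the middle inequality $\underline{{\rm Dim}}_{\rm H}(\mathcal{R}_X) \le \underline{{\rm Dim}}_{\rm M}(\mathcal{R}_X)$: for every $\alpha > \beta^*$ one must show $\mathcal{N}_\alpha(\mathcal{R}_X\,,\cS_k) \to 0$ along the \emph{entire} sequence $k \to \infty$, not just along the subsequence realizing the $\liminf$. At scales where $\card(\mathcal{R}_X \cap \cV_k) \le 2^{k\alpha}$, unit-cube coverings already give $\mathcal{N}_\alpha \to 0$; at the remaining ``burst'' scales, however, one must cover $\mathcal{R}_X \cap \cS_k$ by intermediate-size dyadic cubes whose design requires fine a.s.\ information on the spatial distribution of the range in such shells. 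This is the macroscopic counterpart of the lower-Minkowski regularity established for L\'evy-process ranges in \cite{KXZ2003}, and appears to require a scale-adaptive multiresolution cover together with concentration estimates for the dyadic counts $\card(\mathcal{R}_X \cap Q)$; neither tool is currently part of the large-scale toolkit of \cite{BT1989,BT1992}, which is why the statement is only recorded here as a conjecture.
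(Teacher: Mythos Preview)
The statement you are attempting is Conjecture~\ref{OP2}; the paper does \emph{not} prove it.  The only piece established in the paper is the rightmost inequality of your sandwich, namely \eqref{ubub}.  So there is no ``paper's own proof'' to compare against, and your task was really to resolve an open problem.

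Your proposal is not a proof but an outline of obstacles, and you acknowledge as much in your final paragraph.  Two of the three inequalities remain genuinely open in your write-up.  For the leftmost inequality $\beta^*\le\underline{\rm Dim}_{\rm H}(\mathcal R_X)$, the Frostman step is where the argument breaks: you need $\mu_{k}(Q)\le C\,2^{\alpha(\ell-k-1)}$ \emph{almost surely and uniformly over all dyadic $Q$ of side $2^\ell$}, but you only control $E^0[\mu_k(Q)]\asymp U(Q)/U(\cS_k)$.  Even in expectation this requires $U(\cV_\ell)2^{-\alpha\ell}\le C\,U(\cV_k)2^{-\alpha k}$ for \emph{all} $\ell\le k$, which is not implied by $\liminf_n n^{-1}\log U(\cV_n)=\beta^*$ alone (one can salvage this along the subsequence of record times of $n\mapsto U(\cV_n)2^{-\alpha n}$, but that is not what you wrote).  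Lifting the expectation bound to an a.s.\ uniform bound via ``a union bound and chaining'' is asserted, not carried out, and is nontrivial since there are $\sim 2^{(k-\ell)d}$ cubes at each scale.

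For the middle inequality $\underline{\rm Dim}_{\rm H}(\mathcal R_X)\le\underline{\rm Dim}_{\rm M}(\mathcal R_X)$ you correctly flag that it is not a set-theoretic fact and that the real difficulty lies at the ``burst'' scales where $\card(\mathcal R_X\cap\cV_k)$ exceeds $2^{k\beta^*}$.  One alternative route you did not try, and which might be more tractable, is to bypass the sandwich entirely and attack $\underline{\rm Dim}_{\rm M}(\mathcal R_X)\ge\beta^*$ directly: since $\beta^*$ is a \emph{liminf}, one has $U(\cV_n)\ge 2^{n(\beta^*-\varepsilon)}$ for \emph{all} large $n$, so the second-moment computation \eqref{E3} together with Paley--Zygmund gives $P^0\{\card(\mathcal R_X\cap\cV_n)\ge c\,2^{n(\beta^*-\varepsilon)}\}\ge\varrho(d)$ for every large $n$; the remaining issue is to upgrade this to an a.s.\ statement along the full sequence, which still needs work but is at least a concrete target.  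As written, your proposal correctly diagnoses why the conjecture is hard but does not advance beyond what the paper already records.
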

Admittedly, we have not tried very hard to prove this, but it seems to be a
natural statement. There are two other good reasons for our interest in 
Conjecture \ref{OP2}. First of all, it suggests that, as far as random walks
and their analogous L\'evy processes are
concerned, the more natural choice of ``macroscopic Hausdorff
dimension'' is $\underline{\rm Dim}_{\rm H}$ and not $\Dim$,
in contrast with the proposition of \cite{BT1989,BT1992}. Also, if
Conjecture \ref{OP2} were true, then together with Theorem \ref{th:Dim_M}
and Proposition \ref{pr:gamma_c}
it would imply that regardless of whether or not Conjecture \ref{OP1}
is true, $\Dim(\mathcal{R}_X)$ always lies in the nonrandom
interval
$[ \liminf_{n\to\infty} n^{-1}\log U(\cV_n)\,,
\limsup_{n\to\infty} n^{-1}\log U(\cV_n)].$
The extrema of this interval are typically not hard to compute; therefore,
we at least will have easy-to-compute bounds for $\Dim(\mathcal{R}_X)$.

Let us state a third conjecture that is motivated also by Conjecture \ref{OP1}. 

Choose and fix an arbitrary integer $N\ge 1$, and define $X^{(1)},\ldots,X^{(N)}$
to be $N$ independent copies of a symmetric,
transient random walk $X$ on $\Z^d$
whose Green's function satisfies the Barlow--Taylor condition \eqref{cond:BT}
for some $\alpha\in(0\,,2]$. We can define an $N$-parameter \emph{additive
random walk} $\mathfrak{X}:=\{\mathfrak{X}(\bm{n})\}_{\bm{n}\in\Z^N_+}$
as follows \cite[Ch.\ 4]{Kh2002}: For every $\bm{n}:=(n_1\,,\ldots,n_N)\in\Z^N_+$,
\begin{equation}
	\mathfrak{X}(\bm{n}) := X^{(1)}_{n_1} + \cdots + X^{(N)}_{n_N}.
\end{equation}
 Let $\mathcal{R}_{\mathfrak{X}}:=\cup_{\bm{n}\in\Z^N_+}
 \{\mathfrak{X}(\bm{n})\}$ denote the \emph{range} of the random
 field $\mathfrak{X}$.
 
 \begin{CONJ}\label{OP3}
 	Suppose  $d>\alpha N$ and $N>1$. Then for all nonrandom
	$A\subset\Z^d$:
	\begin{enumerate}
		\item If $\Dim(A)>d-\alpha N$, then $\mathcal{R}_{\mathfrak{X}}\cap A$
			is a.s.\ unbounded; \underline{and}
		\item If $\Dim(A)<d-\alpha N$, then $\mathcal{R}_{\mathfrak{X}}\cap A$
			is a.s.\ bounded.
	\end{enumerate}
\end{CONJ}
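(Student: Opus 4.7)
The plan is to reduce Conjecture \ref{OP3} to a fractal-percolation hitting theorem for $\mathfrak{X}$ and then invoke the codimension argument from the proof of Proposition \ref{pr:BT}. The first preliminary step is to identify the correct Green function for the additive walk. Define
\[
G(x) := \sum_{\bm n \in \Z_+^N} P^0\{\mathfrak{X}(\bm n) = x\},
\]
and observe that by independence of the $X^{(i)}$'s, $G$ is the $N$-fold convolution $g^{\ast N}$ of $g$. The two-sided bound $g(0,x) \asymp \|x\|^{-d+\alpha}$ together with the hypothesis $d > \alpha N$ yields, by standard iteration of the convolution estimate for power-law kernels, that $G(x) \asymp \|x\|^{-d+\alpha N}$ for every $x \neq 0$. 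Thus $\mathfrak{X}$ behaves, at the level of first moments, as if it were a single transient random walk whose Green function had exponent $\alpha N$ in the sense of \eqref{cond:BT}.

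Next I would establish an analog of Theorem \ref{th:RW:FP} for $\mathfrak{X}$, replacing $g$ by $G$ throughout and redefining the Martin $p$-capacity accordingly. The first-moment identity $\mathbb{E}^0(J_\mu)=1$ transfers verbatim. The lower bound on the hitting probability reduces to a Paley--Zygmund estimate in which the second moment $\mathbb{E}^0(J_\mu^2)$ is controlled by the Martin $p$-energy built from $G$; here one decomposes the double sum over $(\bm m,\bm n)\in\Z_+^N\times\Z_+^N$ into $2^N$ sectors according to the coordinatewise partial order and applies the strong Markov property of each $X^{(i)}$ coordinate by coordinate, exactly as in the theory of additive random fields developed in \cite[Ch.\ 4]{Kh2002}. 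The upper bound requires replacing the two-parameter Cairoli martingale of \S\ref{sec:mc} by an $(N+1)$-parameter Cairoli martingale: $N$ of the parameters correspond to the walks $X^{(1)},\ldots,X^{(N)}$, and the remaining one encodes the past/future splitting of the fractal-percolation weights along the forest representation of \S\ref{sec:forest}. Because $X^{(1)},\ldots,X^{(N)},\Pi_p$ are jointly independent, Cairoli's commutation hypothesis (F4) is preserved, and the Cairoli maximal inequality \cite[Corollary 3.5.1]{Kh2002} applies with constant $16^N$ instead of $16$.

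With this hitting theorem in hand, the argument of Corollary \ref{cor:RW:Dim} would give $\Dim(\mathcal{R}_{\mathfrak{X}}\cap A) = -\log_2 p_c(A;0)$, where $p_c$ is built from the appropriate $G$-capacities on shells. Example \ref{ex:RW:nice} applied with $G$ in place of $g$ shows that these capacities satisfy $c_p(A\cap\cS_k;0)\asymp 2^{-k(d-\alpha N)}\,\text{cap}_G(A\cap\cS_k)$. Conditioning on $\Pi_p$ and running the replica argument of the proof of Proposition \ref{pr:BT}, together with Corollary \ref{co:Dim:Pi}, then identifies the critical percolation parameter as $p=2^{-\alpha N}$; equivalently $\mathcal{R}_{\mathfrak{X}}\Recurrent\Pi_p$ almost surely iff $\Dim(\Pi_p)>d-\alpha N$. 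The codimension step from the proof of Proposition \ref{pr:BT} then converts this dichotomy for fractal percolation into the dichotomy of Conjecture \ref{OP3} for an arbitrary nonrandom set $A\subseteq\Z^d$.

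The main obstacle will be the upper-bound half of the adapted Theorem \ref{th:RW:FP}. The lower bound is a routine Paley--Zygmund computation with additional bookkeeping for sectors, but the upper bound requires an $(N+1)$-parameter Cairoli-type maximal inequality together with a careful coordinate-by-coordinate past/future splitting of both the walks and the fractal-percolation weights; verifying that the resulting $(N+1)$-parameter filtration still satisfies the commutation hypothesis is where the condition $N > 1$ must be handled with some care. A secondary, but nontrivial, point is the verification of a Lamperti-type condition such as \eqref{cond:Lamperti} for $G$; since $\mathfrak{X}$ does not have i.i.d.\ increments, one cannot simply quote Example \ref{ex:RW:nice}, and must argue directly from the asymptotic $G(x)\asymp\|x\|^{-d+\alpha N}$ that ratios of the form $G(x-y)/G(-y)$ stay bounded when $x,y$ lie in far-apart shells.
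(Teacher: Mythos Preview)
The statement you are trying to prove is labeled \textsc{Conjecture} in the paper, and the authors explicitly remark that ``the case $N>1$ has eluded our many attempts at solving this problem.'' There is no proof in the paper to compare against; you are proposing a line of attack on an open problem.

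Your overall strategy is the natural one and is likely the very route the authors tried. Two points deserve flagging. First, the step where you invoke ``the argument of Corollary \ref{cor:RW:Dim}'' and then a Lamperti-type recurrence test is where the real difficulty sits, and you underestimate it. The proof of Corollary \ref{cor:Lamperti} (and the lower-bound half of Corollary \ref{cor:RW:Dim}) proceeds by applying the \emph{strong Markov property at the one-parameter stopping time} $T_k=\inf\{n:X_n\in F\cap\cS_k\}$ to decouple events in different shells. For an $N$-parameter additive walk with $N>1$ there is no total order on $\Z_+^N$, no first-hitting ``time,'' and no strong Markov property at random multiparameter points in the same sense; this is precisely the structural obstruction that makes the multiparameter case hard, not merely a matter of bookkeeping or of checking (F4) for a larger Cairoli filtration. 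Your $(N{+}1)$-parameter martingale may well produce a single-shell hitting estimate, but gluing the shells into a Borel--Cantelli recurrence statement via \eqref{goal:BC} requires exactly the Markov step that is unavailable.

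Second, your final sentence has the logic reversed. The codimension argument in the proof of Proposition \ref{pr:BT} starts from the recurrence dichotomy for \emph{every} nonrandom $F$ and deduces $\Dim(\mathcal{R}_X)$ by testing against $F=\Pi_p$. It does not turn a dichotomy known only for $\Pi_p$ into one for arbitrary $A$. If you already had the adapted Lamperti test for $\mathfrak{X}$, you would obtain Conjecture \ref{OP3} directly from it (via the Frostman-type comparison in Example \ref{ex:RW:nice}) without passing through fractal percolation at all; conversely, knowing only that $\mathcal{R}_{\mathfrak{X}}\Recurrent\Pi_p$ iff $p>2^{-\alpha N}$ gives you $\Dim(\mathcal{R}_{\mathfrak{X}})=\alpha N$ but not the hitting dichotomy for a general set $A$.
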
 
Proposition \ref{pr:BT}  implies that Conjecture \ref{OP3}
is correct if $N$ were replaced by $1$; the case $N>1$ has eluded our many
attempts at solving this problem.

It is possible to adapt the arguments
of \cite{KXZ2003} in order to derive 
Conjecture \ref{OP1} from
Conjecture \ref{OP3}. We skip the details of that argument. Instead,
we conclude with two problems about the ``continuous version''
of Corollary \ref{cor:main}, which we recall,
contained our Hausdorff dimension
formula for the range of a walk.

\begin{OP}
	Let $\{y(t)\}_{t\ge0}$ be a transient,
	but otherwise general, L\'evy process on $\R^d$
	whose characteristic exponent is $\Psi$, normalized as
	\begin{equation}
		\E\left(\e^{iz\cdot y(t)}\right) =\e^{-t\Psi(z)}
		\qquad\text{for all $z\in\R^d$
		and $t\ge 0$,}
	\end{equation}
	to be concrete.
	Is there a formula for the a.s.-constant
	quantity $\Dim(y(\R_+))$ that is solely in terms
	of $\Psi$?
\end{OP}

Before we state our last question let
us define the \emph{upper Minkowski dimension} of a set $A\subseteq\R^d$
as follows: Define $A'$ to be the 
union of all dyadic cubes $Q\in\D_0$ of sidelength 
one that intersect $A$.

\begin{definition}
	The \emph{macroscopic upper Minkowski dimension} $\overline{\rm Dim}_{\rm M}(A)$
	is defined, via the Barlow--Taylor upper Minkowski dimension, as follows:
	\begin{equation}
		\overline{\rm Dim}_{\rm M}(A) := \overline{\rm Dim}_{\rm M}(A')\qquad
		\text{for all $A\subset\R^d$}.
	\end{equation}
\end{definition}
The same proof that worked for $A\subseteq\Z^d$ works to show that
$\Dim(A)\le\overline{\rm Dim}_{\rm M}(A)$ for all $A\subset\R^d$.

Although we have not checked all of the details, we believe that
the method of proof of Theorem \ref{th:Dim_M} can be adapted
to the continuous setting in order to produce
\begin{equation}\label{L:M}
	\overline{\rm Dim}_{\rm M}(y(\R_+)) = 
	\limsup_{n\to\infty} n^{-1}\log \mathbb{U}(\cV_n)
	\qquad\text{a.s.},
\end{equation}
where $\mathbb{U}(A) := \int_0^\infty \P\{y(s)\in A\}\,\d s$
for all Borel sets $A\subset\R^d$.

\begin{OP}
	Can one write an expression for
	$\overline{\rm Dim}_{\rm M}(y(\R_+))$ solely in terms of $\Psi$?
\end{OP}

Finally, we mention that Conjecture \ref{OP3} is likely to have
a L\'evy process version wherein the role of the $X^{(i)}$s are replaced
by that of isotropic $\alpha$-stable L\'evy processes. 
We leave the statement [and perhaps also 
a proof!] to the interested reader.

\section*{Acknowledgments}

Alex D. Ramos  thanks CNPq-Brazil and CAPES, process BEX 2176/13-0,
for their financial support, and the Mathematics Department of 
University of Utah for their  hospitality during his visit.

\spacing{.8}\bigskip

\noindent\textbf{Nicos Georgiou}\\
\noindent Department of Mathematics, University of Sussex, 
	Sussex House, Brighton, BN1 9RH, United Kingdom\\
\noindent\textcolor{purple}{\texttt{N.Georgiou@sussex.ac.uk}}\\

\noindent\textbf{Davar Khoshnevisan} \&\ \textbf{Kunwoo Kim}\\
\noindent Department of Mathematics, University of Utah,
		Salt Lake City, UT 84112-0090 \\
\noindent\textcolor{purple}{\texttt{davar@math.utah.edu}}
	\&\ \textcolor{purple}{\texttt{kkim@math.utah.edu}}\\
	
\noindent\textbf{Alex D. Ramos}\\
	Federal University of Pernambuco,
	Department of Statistics/CCEN,
	Av. Prof. Luiz Freire, s/n,
	Cidade Universit\'aria,
	Recife/Pe, 50740-540, Brazil\\
\noindent\textcolor{purple}{\texttt{alex@de.ufpe.br}}

\end{document}